\documentclass[11pt]{amsart}
\usepackage{appendix}
\usepackage{bbold}
\usepackage{graphicx,enumitem,dsfont}
\usepackage{verbatim}
\usepackage[usenames]{color}
\usepackage[all]{xy}
\usepackage{amsmath, mathrsfs, amssymb, amsthm}
\usepackage[colorlinks]{hyperref}
\usepackage{subfigure}
\newtheorem{theorem}{Theorem}[section]
\newtheorem{proposition}[theorem]{Proposition}
\newtheorem{lemma}[theorem]{Lemma}
\newtheorem{corollary}[theorem]{Corollary}
\newtheorem{remark}[theorem]{Remark}

\newtheorem{definition}[theorem]{Definition}
\theoremstyle{definition}

\allowdisplaybreaks
\numberwithin{equation}{section}

\newcommand{\eff}{\text{\it eff}}
\newcommand{\et}{\text{\it \'et}}
\newcommand{\Aff}{\mathbb{A}}

\title{Motives of Deligne-Mumford Stacks}

\author{Utsav Choudhury}
\address{Institut f\"ur Mathematik\\ Universit\"at Z\"urich\\ Winterthurerstrasse 190\\ CH-8057 Z\"urich\\ Switzerland}
\email{utsav.choudhury@math.uzh.ch}
\thanks{Research supported by the Swiss National Science Fundation}
%\date{}
\keywords{$\Aff^1$-homotopy theory, Voevodsky motives, Chow motives, Deligne-Mumford stacks.}

\begin{document}

\begin{abstract}
For every smooth and separated Deligne-Mumford stack $F$, we associate a motive $M(F)$ in Voevodsky's category of mixed motives with rational 
coefficients $\mathbf{DM}^{\eff}(k,\mathbb{Q})$. When $F$ is proper over a field of characteristic $0$, we compare $M(F)$ with the Chow motive associated to $F$ by Toen (\cite{t}). Without the properness condition we show that $M(F)$ is a direct summand of the motive of a smooth quasi-projective variety.   
\end{abstract}

\maketitle

\tableofcontents

\section{Introduction}

The study of Deligne-Mumford stacks from a motivic perspective 
began in \cite{bm} where the notion 
of the $DMC$-motive associated to 
a proper and smooth Deligne-Mumford stack
was introduced as a tool for 
defining Gromov-Witten invariants. The construction of the category $\mathcal{M}_k^{DM}$ of $DMC$-motives uses $A^*$-Chow cohomology theories for Deligne-Mumford stacks as described in \cite{g, j1, j2, ak2, vi, eg}. These $A^*$-Chow cohomology theories coincide with rational coefficients.  
In \cite[Theorem 2.1]{t}, Toen shows that 
the canonical functor 
$\mathcal{M}_k \to
\mathcal{M}_k^{DM}$, from the category of 
usual Chow motives, is an equivalence rationally. 
In particular, to every smooth and proper Deligne-Mumford 
stack $M$, Toen associates a Chow motive $h(M)$.

In this paper we construct motives for smooth (but not necessarily proper) Deligne-Mumford stacks as objects of Voevodsky's 
triangulated category of motives
$\mathbf{DM}^{\eff}(k,\mathbb{Q})$. In the proper case, 
we compare these motives with the Chow motives we get using Toen's equivalence of categories $\mathcal{M}_k\simeq \mathcal{M}_k^{DM}$.
Without assuming properness, our construction 
of the motive of a smooth Deligne-Mumford stack $F$ seems 
to be the first one. However, 
in \cite[Thm 0.1]{gs} Gillet and Soul\'e constructed 
a motivic invariant attached to $F$, namely 
a complex of Chow motives. We hope to 
recover their invariant by applying Bondarko's weight functor  
to $M(F)$. (See \cite[Prop. 6.3.1]{bon}.)
We leave this for a future 
investigation.

The paper is organised as follows.

In section \ref{1}, we briefly review Morel-Voevodsky $\Aff^1$-homotopy category $\mathbf{H}^{\et}(k)$ and Voevodsky triangulated category of motives $\mathbf{DM}^{\eff}(k, \mathbb{Q})$. We also 
construct the functor 
$M:\mathbf{H}^{\et} (k) \to \mathbf{DM}^{\eff}(k, \mathbb{Q})$. 
Given a presheaf of small groupoids $F$, 
we associate an object $Sp(F)$ in $\mathbf{H}^{\et} (k)$.
The motive of $F$ is defined to be $M(F) := M(Sp(F))$. We then show that for a Deligne-Mumford stack $F$ and an \'etale atlas 
$u : U \to F$ we have an isomorphism $M(U_{\bullet}) \cong M(F)$ in $\mathbf{DM}^{\eff}(k, \mathbb{Q})$. Here $U_{\bullet}$ is the 
\u{C}ech hypercovering corresponding to the atlas $u: U \to F$.

In section \ref{2}, we compare the motive of a separated Deligne-Mumford stack $F$ with the motive of the coarse moduli space of $F$.
If $\pi : F \to X$ is the coarse moduli space of a separated Deligne-Mumford stack $F$, we show that the natural morphism 
$M(F) \to M(X)$ is an isomorphism in $\mathbf{DM}^{\eff}(k, \mathbb{Q})$. We then prove projective bundle formula and blow-up formula for smooth Deligne-Mumford stacks.
We also construct the Gysin triangle associated to a smooth, closed substack $Z$ of a smooth Deligne-Mumford stack $F$. 

In section \ref{3}, we show that for any smooth and separated Deligne-Mumford stack $F$ over a field of characteristic zero, 
the motive $M(F)$ is a direct factor of the motive of a smooth quasi-projective scheme. If $F$ is proper we may take this scheme to be projective.

In section \ref{4}, we show that the motivic cohomology of a Deligne-Mumford stack (see \cite[3.0.2]{j2}) is representable in $\mathbf{DM}^{\eff}(k,\mathbb{Q})$.

Finally in section \ref{5} we compare our construction with Toen's construction and prove that for any smooth and proper Deligne-Mumford stack $F$ there is a canonical isomorphism 
$\iota \circ h(F) \cong M(F)$; this is Theorem
\ref{thm:compare-toen}. 
Here $\iota : \mathcal{M}_k^{\eff} \to \mathbf{DM}^{\eff}(k,\mathbb{Q})$ is the fully faithful embedding described in \cite[Proposition 20.1]{mvw}.

The paper ends with two appendixes. 
In Appendix \ref{appendix-A}, we show that some naturally defined 
functor $\omega:\mathcal{M}_k^{\eff} \to PSh(\mathcal{V}_k)$
is fully faithful (see \ref{fully faithful}). This statement appears without proof in \cite[2.2]{ajs} and is 
mentioned in \cite[page12]{t}. It is also needed in the proof 
of \ref{thm:compare-toen}.
In Appendix \ref{appendix-B} we provide a technical result used in Appendix \ref{appendix-A}.

\medskip

\noindent
\textbf{Acknowledgements.}\,
I warmly thank D.~Rydh for his valuable suggestions. The idea of the proof of theorem \ref{quotient} was entirely communicated 
by him. I also thank A.~J.~Scholl
for discussions about \cite{ajs}
and A.~Kresch for answering questions on stacks.
This paper is part of my PhD thesis under the 
supervision of J.~Ayoub. I thank him for his 
guidance during this project. Finally, I would like to thank the referee for useful comments.

\section{The general construction}
\label{1}

In this section we describe our construction of the motive associated 
to a smooth Deligne-Mumford stack. In fact, our construction 
applies more generally to any stack but the existence of atlases 
can be used to give explicit models. We start by recalling
the motivic categories used in this paper.

\subsection{Review of motivic categories}
Let $Sm/k$ be the category of smooth separated finite type $k$-schemes and denote
$PSh(Sm/k)$ the category of presheaves of sets on $Sm/k$. Also denote $\bigtriangleup^{op} PSh(Sm/k)$ the category of spaces, i.e., presheaves of simplicial sets. As usual $\bigtriangleup$ is the category of simplices.

$\bigtriangleup^{op} PSh(Sm/k)$ has a local model structure with respect to the \'etale topology \cite[Theorem 2.4 and Corollary 2.7]{jar}. A morphism $f : \mathcal{X} \to \mathcal{Y} \in \bigtriangleup^{op} PSh(Sm/k)$ is a local weak equivalence if the induced morphisms on the stalks (for the \'etale topology) are weak equivalences of simplicial sets. Cofibrations are monomorphisms and fibrations are characterised by the right lifting property. We denote by $\mathbf{H}_s^{\et} (k)$ the homotopy category of $\bigtriangleup^{op} PSh(Sm/k)$ with respect to the \'etale local model structure, i.e., obtained by inverting formally
the local weak equivalences.

Following \cite[\S 3.2]{mv}, we consider the Bousfield localisation of the local model structure on $\bigtriangleup^{op} PSh(Sm/k)$ with respect to the class of maps $\mathcal{X} \times \mathbb{A}^1 \to \mathcal{X}$ where 
$\mathcal{X}\in \bigtriangleup^{op} PSh(Sm/k)$. The resulting model structure will be simply called the (\'etale) motivic model structure. The homotopy category with respect to this \'etale motivic model structure is denoted by $\mathbf{H}^{\et} (k)$.
(We warn the reader that in 
\cite[\S 3.2]{mv} the Nisnevich topology is used 
instead of the \'etale topology.)

\begin{remark}
Denote $(\bigtriangleup^{op} PSh(Sm/k))_{\bullet}$ the category of pointed spaces, i.e., presheaves of pointed simplicial sets on $Sm/k$. We also have the pointed versions of the local and motivic model structures where weak equivalences are detected after forgetting the pointing. The homotopy categories are denoted by $\mathbf{H}_{s, \bullet}^{\et}(k)$ and $\mathbf{H}_{\bullet}^{\et}(k)$
respectively.
\end{remark}

Now we briefly recall some facts on Voevodsky's motives. Recall that $SmCor(k)$ is the category of finite correspondences. Objects of this category are smooth $k$-schemes $X$. For $X,\,Y \in Sm/k$, 
$Hom_{SmCor(k)}(X,Y)$ is given by the group of 
finite correspondences $Cor(X,Y)$. This is the free abelian group generated by integral closed subschemes $W \subset X \times Y$ which are finite and surjective on a connected component of $X$.  Thus, if $X = \coprod_i X_i$ we have 
$Cor(X \times Y) = \bigoplus_i Cor(X_i \times Y)$.
 
A presheaf with transfers is a contravariant additive functor on $SmCor(k)$. Denote by $PST(k,\mathbb{Q})$ the category of presheaves with transfers with values in the category of $\mathbb{Q}$-vector spaces. 
A typical example is given by $\mathbb{Q}_{tr}(X)$ for $X \in Sm/k$. This presheaf 
associates to each $U \in Sm/k$ the vector space 
$Cor(U,X)\otimes \mathbb{Q}$.

Analogous to the local and motivic model category structures on the category of spaces, we have a local and a motivic model structures on the category $K(PST(k,\mathbb{Q}))$ of complexes of presheaves with transfers. A morphism $K \to L$ in $K(PST(k,\mathbb{Q}))$ is an \'etale weak equivalence if it induces quasi-isomorphisms on stalks for the \'etale topology
(or the Nisnevich topology; it doesn't matter 
in the presence of transfers). 
Cofibrations are monomorphisms and fibrations are characterized 
by the right lifting property. This gives the local model category structure on $K(PST(k,\mathbb{Q}))$ (cf. \cite[Theorem. 2.5.7]{a1}). The homotopy category is nothing but the derived category of \'etale sheaves with transfers $\mathbf{D}(Str(Sm/k,\mathbb{Q}))$. (Here $Str(Sm/k,\mathbb{Q})$ is the category of \'etale sheaves with transfers). The motivic model structure is the Bousfield localisation of the local model structure with respect to the class of morphisms $\mathbb{Q}_{tr}(\mathbb{A}^1 \times X)[n] \to \mathbb{Q}_{tr}(X)[n]$ for $X\in Sm/k$ and $n \in \mathbb{Z}$.
The resulting homotopy category with respect to the motivic model structure is denoted by $\mathbf{DM}^{\eff}(k,\mathbb{Q})$.
This is Voevodsky's triangulated category of mixed motives (with 
rational coefficients).

The functor $\mathbb{Q}_{tr}(-) : Sm/k \to PST(k,\mathbb{Q})$
 extends to a functor $\mathbb{Q}_{tr}:PSh(Sm/k) \to PST(k,\mathbb{Q})$ given by $\mathbb{Q}_{tr}(F) = Colim_{X \to F} \;\mathbb{Q}_{tr}(X)$ for any 
presheaf of sets $F$ on $Sm/k$.
In the next statement, 
$N(-)$ denotes the functor that associates the 
normalized chain complex to a simplicial object
in an additive category (cf. \cite[page 145]{gj}).

\begin{proposition}
\label{motivefunctor}
There exists a functor $M:\mathbf{H}^{\et}(k) \to \mathbf{DM}^{\eff}(k,\mathbb{Q})$. It sends a simplicial scheme $X_{\bullet}$ to
the $N\mathbb{Q}_{tr}(X_{\bullet})$.
\end{proposition}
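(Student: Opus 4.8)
The plan is to construct $M$ as a composition of two functors: first, a functor $\mathbb{Q}_{tr}: \mathbf{H}_s^{\et}(k) \to \mathbf{D}(Str(Sm/k,\mathbb{Q}))$ induced by $\mathbb{Q}_{tr}(-): PSh(Sm/k) \to PST(k,\mathbb{Q})$ at the level of (simplicial) presheaves and normalized complexes, and second, the localisation functor $\mathbf{D}(Str(Sm/k,\mathbb{Q})) \to \mathbf{DM}^{\eff}(k,\mathbb{Q})$ coming from the Bousfield localisation that defines $\mathbf{DM}^{\eff}$. Both source categories here are homotopy categories of model categories, so the cleanest route is to exhibit the underlying functor on model categories as a left Quillen functor and pass to left derived functors; the composite then automatically descends to the $\Aff^1$-localisations on both sides, which is exactly what is needed to get a functor out of $\mathbf{H}^{\et}(k)$.

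First I would set up the underlying functor at the point-set level. For a space $\mathcal{X}_\bullet \in \bigtriangleup^{op}PSh(Sm/k)$, apply $\mathbb{Q}_{tr}(-)$ degreewise to get a simplicial object in $PST(k,\mathbb{Q})$, then take the normalized chain complex $N$ to land in $K(PST(k,\mathbb{Q}))$; call this composite $C$. The key formal points to check are: (i) $C$ is a left adjoint — its right adjoint sends a complex of presheaves with transfers to the simplicial presheaf obtained by applying the Dold-Kan functor $K$ to the (good) truncation and then restricting the transfer-presheaf to a plain presheaf; (ii) $C$ sends (simplicial-presheaf) cofibrations, i.e. monomorphisms, to monomorphisms of complexes, hence cofibrations in $K(PST(k,\mathbb{Q}))$, because $\mathbb{Q}_{tr}(-)$ preserves monomorphisms of presheaves (it is a filtered colimit of the representable case $\mathbb{Q}_{tr}(X)$, and on representables a monomorphism $X' \hookrightarrow X$ induces an injection on finite correspondences) and $N$ preserves monomorphisms; (iii) $C$ sends local (étale) trivial cofibrations to local quasi-isomorphisms. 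Point (iii) is the substantive input: it reduces, by the characterization of generating trivial cofibrations for the Jardine-type model structure, to showing that $\mathbb{Q}_{tr}(-)$ takes a local weak equivalence of simplicial presheaves to an étale-local quasi-isomorphism after $N$; this follows from the fact that $\mathbb{Q}_{tr}$ commutes with the relevant (homotopy) colimits and that, on stalks at a separably closed point, $\mathbb{Q}_{tr}$ applied to a weak equivalence of simplicial sets is a quasi-isomorphism of the associated $\mathbb{Q}$-linear chain complexes (linearization of a weak equivalence).

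Having shown $C$ is left Quillen for the local model structures, it induces $LC: \mathbf{H}_s^{\et}(k) \to \mathbf{D}(Str(Sm/k,\mathbb{Q}))$. To pass to the motivic structures, I would invoke the universal property of Bousfield localisation: a left Quillen functor descends to the localisations provided it takes the localising maps on the source to weak equivalences on the localised target. Here the localising maps on the source are $\mathcal{X} \times \Aff^1 \to \mathcal{X}$; applying $C$ gives (up to the Künneth-type identification $\mathbb{Q}_{tr}(\mathcal{X}\times \Aff^1) \cong \mathbb{Q}_{tr}(\mathcal{X}) \otimes \mathbb{Q}_{tr}(\Aff^1)$ at the level of presheaves) precisely maps built from $\mathbb{Q}_{tr}(\Aff^1 \times X)[n] \to \mathbb{Q}_{tr}(X)[n]$, which are by definition inverted in $\mathbf{DM}^{\eff}(k,\mathbb{Q})$. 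Therefore $LC$ composed with the localisation $\mathbf{D}(Str) \to \mathbf{DM}^{\eff}(k,\mathbb{Q})$ factors through $\mathbf{H}^{\et}(k)$, giving the desired $M$. On a smooth simplicial scheme $X_\bullet$, cofibrant replacement is unnecessary since the termwise representable simplicial presheaf is already cofibrant, so $M(X_\bullet)$ is computed directly as $N\mathbb{Q}_{tr}(X_\bullet)$, as claimed.

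The main obstacle I expect is step (iii) above — verifying that the linearization functor $C$ is homotopically well-behaved for the \emph{étale}-local structure, i.e. that it sends local weak equivalences to local quasi-isomorphisms. The subtlety is that $\mathbb{Q}_{tr}$ is only defined as a left Kan extension along representables and does not obviously commute with the simplicial mapping spaces or with sheafification, so one must argue carefully on stalks (using that étale stalks are computed by filtered colimits over étale neighbourhoods, over which $\mathbb{Q}_{tr}$ and $N$ both commute with filtered colimits) and then reduce to the purely simplicial statement that $N$ applied to $\mathbb{Q}$-linearization of a weak equivalence of simplicial sets is a quasi-isomorphism. Everything else — adjointness, preservation of monos, the Künneth identification, and the descent through Bousfield localisation — is formal once this homotopical input is in place.
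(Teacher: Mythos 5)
Your overall architecture — a left Quillen functor from spaces to complexes of presheaves with transfers, descending through Bousfield localisation — is the same as the paper's, but there is a real gap in the step you yourself flag as the substantive one.

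The flaw is in your proposed proof of (iii). You want to reduce the statement ``$N\mathbb{Q}_{tr}$ sends local weak equivalences of simplicial presheaves to local quasi-isomorphisms'' to the ``purely simplicial statement that $N$ applied to $\mathbb{Q}$-linearization of a weak equivalence of simplicial sets is a quasi-isomorphism,'' after passing to \'etale stalks. This does not reduce. The stalk of $\mathbb{Q}_{tr}(\mathcal{X})$ at a geometric point $\bar{s}$ is $\operatorname{colim}_V \mathbb{Q}_{tr}(\mathcal{X})(V)$, while the $\mathbb{Q}$-linearization of the stalk of $\mathcal{X}$ is $\mathbb{Q}[\operatorname{colim}_V\mathcal{X}(V)]$; these are not the same because $\mathbb{Q}_{tr}(\mathcal{X})(V) \neq \mathbb{Q}[\mathcal{X}(V)]$. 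Already for a representable $X$, $\mathbb{Q}_{tr}(X)(V) = Cor(V,X)\otimes\mathbb{Q}$ is generated by cycles in $V\times X$ finite and surjective over $V$, which is strictly larger than the free $\mathbb{Q}$-module on $Hom(V,X)$ (take $X=\mathbb{A}^1$: finite flat subschemes of $V\times\mathbb{A}^1$ over $V$ are far more numerous than graphs of regular functions). Transfers do not commute with stalks in this naive way, and ``linearization of a weak equivalence'' at the stalk level is therefore not the right input. This is exactly why the statement requires real content from the theory of presheaves with transfers, and not a formal linearization argument.

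The paper's proof routes around this. It works with the \emph{projective} (not injective) model structures on both sides, constructs the right adjoint $\Gamma$ explicitly, observes the Quillen adjunction for the global projective structures, and then invokes Dugger--Hollander--Isaksen to present the local model structure as a Bousfield localisation of the global one along hypercovers. To see that $N\mathbb{Q}_{tr}$ inverts hypercovers it reduces, via $\Gamma$, to showing that a local fibrant complex $C_\bullet$ satisfies \'etale hyperdescent, i.e.\ $\mathbb{H}^n_{\et}(X,C_\bullet)\cong H^n(C_\bullet(X))$; this in turn is supplied by the nontrivial fact (MVW Prop.~6.12) that $\mathbb{Q}_{tr}(U_\bullet)\to\mathbb{Q}_{tr}(X)$ is a resolution of \'etale sheaves with transfers for any \'etale \v{C}ech hypercover $U_\bullet\to X$. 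That input replaces your stalk-wise reduction; without it the argument does not close. (Your secondary claim in (ii) that $\mathbb{Q}_{tr}$ preserves all monomorphisms of presheaves is also not immediate — colimits of monomorphisms need not be monomorphisms — which is one more reason the paper opts for the projective structure where cofibrations are controlled by representables rather than being all monomorphisms.)
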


\begin{proof}
This is well known. We give a sketch of proof here. The functor 
$\mathbb{Q}_{tr}(-):PSh(Sm/k) \to PST(k,\mathbb{Q})$ extends to a functor 
$$N\mathbb{Q}_{tr}(-):\bigtriangleup^{op} PSh(Sm/k) \to K(PST(k,\mathbb{Q})).$$ 
There is a functor $\Gamma:K(PST(k,\mathbb{Q})) \to \bigtriangleup^{op} PSh(Sm/k)$ right adjoint to $N \mathbb{Q}_{tr}$
(cf. \cite[page 149]{gj}). We will show that the pair
$(N\mathbb{Q}_{tr},\Gamma)$ is a Quillen adjunction 
for the projective motivic model structures.
(These model structures are different from the ones described above:
they have the same weak equivalences but the cofibrations
in the projective ones are defined by the left lifting 
property with respect to section-wise trivial fibrations
of presheaves of simplicial sets and surjective
morphisms of complexes of 
presheaves with transfers respectively.)

The functor $\Gamma$ takes section-wise weak equivalences in 
$K(PST(k,\mathbb{Q}))$
to section-wise weak equivalences in $\bigtriangleup^{op} PSh(Sm/k)$ and it takes surjective morphisms to section-wise fibrations in $\bigtriangleup^{op} PSh(Sm/k)$. Hence for the projective global model structures the pair $(N\mathbb{Q}_{tr}, \Gamma)$ is a Quillen adjunction. 
By \cite[Theorem 6.2]{dhi} the projective \'etale local model structure 
on $\bigtriangleup^{op} PSh(Sm/k)$ is 
the Bousfield localisation of the global projective model structure with respect to general hypercovers for the \'etale topology. Let $S$ be the class of those hypercovers. To show that 
the pair $(N\mathbb{Q}_{tr}, \Gamma)$ is a Quillen adjunction
for the \'etale local model structures, we need to show that 
the left derived functor of $N\mathbb{Q}_{tr}$ maps morphisms in $S$ to \'etale local weak equivalences in $K(PST(k,\mathbb{Q}))$. 
For this
it is enough to show that $\Gamma$ maps a local fibrant object $C_{\bullet}$ of $K(PST(k,\mathbb{Q}))$ to an $S$-local object of 
$\bigtriangleup^{op}PSh(Sm/k)$. 
Showing that 
$\Gamma(C_{\bullet})$ is $S$-local 
is equivalent to 
showing that the \'etale hypercohomology 
$\mathbb{H}^n_{\et}
(X,C_{\bullet})$ is isomorphic to
$H^n(C_{\bullet}(X))$ for any $X \in Sm/k$ and 
$n\geq 0$. Now, the hypercohomology $\mathbb{H}^n_{\et} (X, C_{\bullet})$ can be calculated using \u{C}ech hypercovers $U_{\bullet} \to X$ and moreover by \cite[Prop 6.12]{mvw} the complex $\mathbb{Q}_{tr} (U_{\bullet})$ is a resolution of the \'etale sheaf $\mathbb{Q}_{tr} (X)$. Since $C_{\bullet}$ is local fibrant we have 
$$H^n(C_{\bullet} (X)) = Hom_{Ho(K(PST(k,\mathbb{Q})))} (\mathbb{Q}_{tr} (U_{\bullet}), C_{\bullet}[n])=H^n(Tot(C_{\bullet} (U_{\bullet}))).$$ 
(Here $Ho(K(PST(k,\mathbb{Q})))$ is the homotopy category with respect to global projective model structure on $K(PST(k,\mathbb{Q}))$.) 
Now passing to the colimit over hypercovers $U_{\bullet}\to X$ we get  $\mathbb{H}^n_{\et} (X, C_{\bullet}) \cong H^n(C_{\bullet} (X))$.   

At this point we get a functor 
$\mathbf{H}^{\et}_s (k) \to \mathbf{D}(Str(Sm/k))$ 
and it remains to show that this functor takes 
the maps $\mathcal{X} \times \mathbb{A}^1 \to \mathcal{X}$
to motivic weak equivalences. This is clear by construction. 
\end{proof}

\begin{remark}
\label{remark:integral-coef-M}
There is also a functor
$M:\mathbf{H}^{\et}(k)\to \mathbf{DM}^{\eff,\,\et}(k,\mathbb{Z})$
to Voevodsky's category of \'etale motives with 
integral coefficients. It is constructed exactly as 
above. (Note that with integral coefficients, the categories 
of \'etale and Nisnevich motives are different: we denote 
them $\mathbf{DM}^{\eff,\,\et}(k,\mathbb{Z})$ and 
$\mathbf{DM}^{\eff}(k,\mathbb{Z})$ respectively;
with rational coefficient these categories 
are the same.)
\end{remark}

\subsection{The construction}
Let $Grpd$ be the category of (small) groupoids. Let $\mathcal{C}$ be any category.

Consider $2-Fun(\mathcal{C}^{op}, Grpd)$ the category of lax $2$-functors from $\mathcal{C}$ to $Grpd$. Recall that a lax $2$-functor $F$ associates to $X \in \mathcal{C}$ a groupoid $F(X)$, to $f : Y \to X$  a functor $F(f) : F(X) \to F(Y)$, and to composable morphisms $f$ and $g$ an isomorphism $F(f) \circ F(g) \cong F(g \circ f)$.  The 
$1$-morphisms between two lax $2$-functors
$\mathcal{F}$ and $\mathcal{G}$ are lax natural transformations $H$ such that for any $f: Y \to X \in \mathcal{C}$ there is a natural isomorphism between the fucntors $G(f) \circ H_X$ and $H_Y \circ F(f)$.
For any composable morphisms $f$ and $g$, we have the usual compatibility conditions.
$2$-isomorphisms between lax transformations $H$ and $H'$ are given by isomorphisms of functors $a_X : H_X \cong H'_X$ for each $X \in \mathcal{C}$, such that for
any $f: Y \to X$ we have $G(f)(a_X) = a_Y (F(f))$.

For objects $X,\, Y \in \mathcal{C}$, consider the set $Hom_{\mathcal{C}} (Y,X)$ as a discrete groupoid, i.e, all morphisms are identities. In this way, the functor
$Hom_{\mathcal{C}} (-,X):\mathcal{C} \to Grpd$ is a 
strict $2$-functor which we denote by 
$h(X)$.

\begin{lemma} 
\label{equivalence}
Let $F \in 2-Fun(\mathcal{C}^{op}, Grpd)$. There is a surjective equivalence of categories $Hom_{2-Fun(\mathcal{C}^{op}, Grpd)} (h(X), F) \to F(X)$ given by evaluating at $id_X \in h(X)(X)$.
\end{lemma}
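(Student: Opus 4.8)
The plan is to prove the 2-categorical Yoneda lemma in this lax setting by constructing the inverse functor explicitly and checking the two composites are (naturally isomorphic to) the identity.

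First I would describe the functor $\mathrm{ev}_{\mathrm{id}_X}\colon \mathrm{Hom}_{2\text{-}Fun(\mathcal{C}^{op}, Grpd)}(h(X), F) \to F(X)$: it sends a lax natural transformation $H$ to the object $H_X(\mathrm{id}_X) \in F(X)$, and a $2$-isomorphism $a\colon H \Rightarrow H'$ to the isomorphism $(a_X)_{\mathrm{id}_X}\colon H_X(\mathrm{id}_X) \xrightarrow{\sim} H'_X(\mathrm{id}_X)$ in $F(X)$. One checks this respects composition and identities, so it is a functor. Next, to build the inverse, given $\xi \in F(X)$ I would define a lax natural transformation $H^\xi\colon h(X) \to F$ by setting $H^\xi_Y\colon h(X)(Y) = \mathrm{Hom}_{\mathcal{C}}(Y,X) \to F(Y)$ to be the functor sending $g\colon Y \to X$ to $F(g)(\xi)$; since the source is discrete this determines $H^\xi_Y$ on objects, and it is forced on morphisms (all identities). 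The lax naturality isomorphism for $f\colon Y \to X'$ — comparing $F(f)\circ H^\xi_{X'}$ with $H^\xi_Y \circ h(X)(f)$ on an object $g\colon X'\to X$ — is exactly the structural isomorphism $F(f)(F(g)(\xi)) \cong F(g\circ f)(\xi)$ coming from the lax $2$-functor structure of $F$. The compatibility (coherence) conditions for these isomorphisms under composable morphisms follow from the corresponding coherence axioms of the lax $2$-functor $F$. Sending $\xi$ to $H^\xi$, and an isomorphism $\xi \xrightarrow{\sim} \xi'$ in $F(X)$ to the evident $2$-isomorphism with components $F(g)(\xi) \to F(g)(\xi')$, gives a functor $F(X) \to \mathrm{Hom}_{2\text{-}Fun(\mathcal{C}^{op},Grpd)}(h(X), F)$.

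Then I would verify the two composites. In one direction, $\mathrm{ev}_{\mathrm{id}_X}(H^\xi) = F(\mathrm{id}_X)(\xi)$, which is canonically isomorphic to $\xi$ via the unit constraint of the lax $2$-functor $F$ (recall $F(\mathrm{id}_X) \cong \mathrm{id}_{F(X)}$); these isomorphisms assemble into a natural isomorphism from $\mathrm{ev}_{\mathrm{id}_X}\circ(-)^{H}$ to $\mathrm{id}_{F(X)}$. In the other direction, given a lax natural transformation $H$, I would produce a $2$-isomorphism $H^{H_X(\mathrm{id}_X)} \Rightarrow H$: its component at $Y$ is a natural isomorphism $H^{H_X(\mathrm{id}_X)}_Y \Rightarrow H_Y$, i.e.\ for each $g\colon Y\to X$ an isomorphism $F(g)(H_X(\mathrm{id}_X)) \xrightarrow{\sim} H_Y(g)$; this is supplied by the lax naturality isomorphism of $H$ for the morphism $g$ applied to $\mathrm{id}_X$, observing that $h(X)(g)(\mathrm{id}_X) = g$. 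Checking that these components satisfy the required compatibility with morphisms of $\mathcal{C}$ is again a diagram chase using the coherence axioms for $H$. Finally surjectivity (in fact essential surjectivity suffices, but one gets strict surjectivity on objects) is immediate from the first composite computation, or directly: any $\xi$ is $\mathrm{ev}_{\mathrm{id}_X}(H^\xi)$ up to the unit constraint, and if one normalizes so that $F(\mathrm{id}_X) = \mathrm{id}_{F(X)}$ strictly then $\xi = \mathrm{ev}_{\mathrm{id}_X}(H^\xi)$ on the nose.

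The main obstacle is purely bookkeeping: carefully tracking the lax structural isomorphisms of $F$ and the lax naturality isomorphisms of $H$ through the coherence (pentagon/triangle-type) axioms to confirm that $H^\xi$ is genuinely a lax natural transformation and that the comparison data are genuine $2$-isomorphisms. There is no conceptual difficulty — it is the standard enriched/2-categorical Yoneda argument — but the lax (as opposed to pseudo or strict) framework means every equation must be replaced by a specified isomorphism and one must check the isomorphisms are coherent. I would organize the write-up so that the coherence verifications are isolated into one or two displayed commutative diagrams, each reduced to an instance of an axiom already imposed on lax $2$-functors and lax natural transformations.
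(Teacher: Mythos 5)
Your proof is correct and follows essentially the same route as the paper: both construct the candidate quasi-inverse by sending $\xi\in F(X)$ to the transformation $g\mapsto F(g)(\xi)$ (using the lax structure of $F$ for the naturality data), and both use the lax-naturality isomorphism $F(g)(H_X(\mathrm{id}_X))\cong H_Y(g)$ to compare an arbitrary $H$ with the one built from $H_X(\mathrm{id}_X)$. You phrase it as ``build an inverse and check the two composites'' while the paper phrases it as ``show the evaluation is surjective on objects and fully faithful,'' but the underlying constructions and verifications are the same.
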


\begin{proof}
Given any lax natural transformation $H: h(X) \to F$ we get an object $X' := H_X(id_X) \in F(X)$. Given two lax natural transformations $H,H'$ and a $2$-isomorphism $a$ between them, we get an isomorphism $a_X(id_X) : H_X(id_X) \cong H'_X(id_X)$.  Let $X' \in F(X)$. We have a  natural transformation given by $G_Y (f : Y \to X) = F(f)(X')$. Since $F$ is a lax presheaf we have $F(f \circ g)(X') \cong F(g) \circ F(f)$ for any $Z \xrightarrow{g} Y \xrightarrow{f} X$. Hence we get the required natural transformation between $F(g) \circ G_Y$ and $G_Z \circ h(X)(g)$. Moreover let $H,\,G:h(X) \to F$ such that there exists a morphism $f : H_X(id_X) \to G_X(id_X) \in F(X)$. We define a unique $2$-isomorphism $a$ between $H$ and $G$ in the following way.
For any $g \in h(X)(Y)$ we have $H_Y(g) \cong F(g)(H_X(id_X))$ given by the structure of the lax natural transformation. Similarly we get $G_Y(g) \cong F(g)(G_X(id_X))$. But then there exists $F(g) (f) :  F(g)(H_X(id_X)) \cong F(g)(G_X(id_X))$. So $a_Y(g) : H_Y(g) \cong G_Y(g)$ and $a_X (id_X) : H_X(id_X) \to G_X(id_X)$ is equal to $f$.
\end{proof}

\begin{remark} 
\label{rigid}
In general $Hom_{2-Fun(\mathcal{C}^{op}, Grpd)} (h(X), F)$ is not small unless $\mathcal{C}$ is small. Let $Sch/k$ be the category of finite type $k$-schemes.  We fix $C \subset Sch/k$ which is a full small subcategory equivalent to $Sch/k$. For any $X \in Sch/k$
and $F \in 2-Fun((Sch/k)^{op}, Grpd)$ the association $X  \mapsto Hom_{2-Fun(C^{op}, Grpd)} (h(X)|_C, F|_C)$ gives a strict presheaf of groupoids. We denote it by $h_{st} (F)$ . By \ref{equivalence} we have an
equivalence $F|_C \cong h_{st}(F)|_C$.

\end{remark}

\begin{definition}
Let  $F \in 2-Fun((Sch/k)^{op}, Grpd)$. Then the $\mathbb{A}^1$-homotopy type of $F$ is the space $Sp(F) := Ner(h_{st}(F))|_{Sm/k}$ considered as an object of $\mathbf{H}^{\et}(k)$. Here $Ner$ is the nerve functor.
\end{definition}

\begin{definition}
Let $F \in 2-Fun((Sch/k)^{op}, Grpd)$. Then the motive of $F$ is defined as $M(F):=M(Sp(F))$. This gives a functor 
$$M: 2-Fun((Sch/k)^{op}, Grpd) \to \mathbf{DM}^{\eff}(k,\mathbb{Q}).$$
Using \ref{remark:integral-coef-M}
we can also define an integral version of the motive of $F$, which 
we also denote $M(F)$ if no confusion can arise.
\end{definition}

\subsection{The case of a Deligne-Mumford stack}

Let $F:(Sch/k)^{op} \to Grpd$ be a lax $2$-functor. 

\begin{definition}
The functor $F$ is a stack in the \'etale topology if it satisfies the following axioms where
$\left\{f_i:U_i \rightarrow U\right\}_{i \in I}$ is an \'etale covering of $U\in Sch/k$ and 
$f_{ij,i}:U_i \times_U U_j \to U_i$ are the projections.

\begin{enumerate}

\item (Glueing of morphisms) If $X$ and $Y$ are two objects of $F(U)$, and $\phi_i : F(f_i)(X) \cong F(f_i) (Y)$ are isomorphisms such that $F(f_{ij,i})(\phi_i) = F(f_{ij,j}) (\phi_j)$, then there exists an isomorphism $\eta : X \cong Y$ such that $F(f_i)(\eta) = \phi_i$. 

\item (Separation of morphisms) If $X$ and $Y$ are two objects of $F(U)$, and $\phi : X \cong Y$, $\psi : X \cong Y$ are isomorphisms such that $F(f_i)(\phi) = F(f_i) (\psi)$, then $\phi = \psi$.

\item (Glueing of objects) If $X_i$ are objects of $F(U_i)$ and $\phi_{ij} : F(f_{ij,j}) (X_j) \cong F(f_{ij,i}) (X_i)$ are isomorphisms satisfying the cocycle condition 
$$(F(f_{ijk,ij})(\phi_{ij})) \circ (F(f_{ijk,jk})(\phi_{jk})) = 
F(f_{ijk,ik}) (\phi_{ik}),$$ 
then there exist an object $X$ of $F(U)$ and $\phi_i : F(f_i)(X) \cong X_i$ such that 
$\phi_{ji} \circ (F(f_{ij,i})(\phi_i)) = F(f_{ij,j})(\phi_j)$.

\end{enumerate}

\end{definition}

\begin{remark}
There is a notion of strict stacks (see \cite{sh}, \cite{j}). If $F$ is a strict presheaf of groupoids then by \cite[lemma 7, lemma 9]{j} there exists a strict stack $St(F)$ and a morphism $st : F \to St(F)$ such that $st$ is a local weak equivalence, i.e., $st$ induces equivalences of groupoids on stalks. The stack $St(F)$ is called the associated stack of $F$ and the functor is called the stackification functor.
\end{remark}

For any groupoid object $R \rightrightarrows U$ in $Sch/k$ we can associate a strict presheaf of groupoids $h(R \rightrightarrows U)$ (see the proof of lemma \ref{mine}).
\begin{definition}
A Deligne-Mumford stack $F$ is a stack on $Sch/k$ admitting a local equivalence (stalk-wise equivalence in the \'etale topology) 
$h(R \rightrightarrows U) \to F$, where  $R \rightrightarrows U$
is a groupoid object in $Sch/k$, such that both morphisms $R \to U$ are \'etale and $R \to U \times_k U$ is finite.

\end{definition}

\begin{remark}
Our definition of a Deligne-Mumford stack is equivalent to that 
of separated finite type Deligne-Mumford stack from \cite{lm}. The morphism $p : U \to F$ is representable and is called the atlas of $F$. We also have $R \cong U \times_F U$. We say that $F$ is smooth if $U$ is smooth.

\end{remark}

Given an atlas $f:U \to F$ of a smooth Deligne-Mumford stack $F$, 
we get a simplicial object $U_{\bullet}$ in $Sm/k$ by defining 
$U_i=U \times_F \cdots \times_F U$ ($i+1$ times) and the face and degeneracy maps are defined by relative diagonal and partial projections. 

\begin{lemma}
\label{mine}
For $R \rightrightarrows U$ as above we have $Ner(h(R \rightrightarrows U)) = U_{\bullet}$.
\end{lemma}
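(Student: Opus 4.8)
The plan is to make both sides explicit as simplicial presheaves of groupoids (or, after taking nerves, as simplicial spaces) and to check that the face and degeneracy maps agree. First I would recall the construction of $h(R \rightrightarrows U)$: for a $k$-scheme $T$, the groupoid $h(R \rightrightarrows U)(T)$ has object set $U(T) = \mathrm{Hom}_{Sch/k}(T,U)$ and morphism set $R(T) = \mathrm{Hom}_{Sch/k}(T,R)$, with source and target given by the two structure maps $R \rightrightarrows U$, identities supplied by the unit section, inverses by the inversion of the groupoid object, and composition by the partial product $R \times_U R \to R$. Applying the nerve degreewise gives a simplicial presheaf $Ner(h(R \rightrightarrows U))$ whose value in degree $n$ is, on $T$-points, the set of composable strings of $n$ arrows; since composition is a fibre product over $U$, this degree-$n$ presheaf is represented by the scheme $R \times_U \cdots \times_U R$ ($n$ factors), which I will write $R_n$, with $R_0 = U$ and $R_1 = R$.

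Next I would compare this with $U_\bullet$. By hypothesis $h(R \rightrightarrows U) \to F$ is a local equivalence and the atlas satisfies $R \cong U \times_F U$; more generally one gets $R_n = R \times_U \cdots \times_U R \cong U \times_F \cdots \times_F U$ ($n+1$ factors) $= U_n$, because the two projections $R \rightrightarrows U$ are identified with the two projections $U \times_F U \rightrightarrows U$ under $R \cong U\times_F U$, and the fibre product of arrows over objects in the groupoid translates precisely into the iterated fibre product over $F$. So degreewise the two simplicial schemes are canonically isomorphic. The remaining point is to match the simplicial structure: the $i$-th face map of the nerve either composes the $i$-th and $(i{+}1)$-st arrows (inner faces) or drops the first/last arrow (outer faces), while the face maps of $U_\bullet$ are the relative diagonals and partial projections. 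Under the identification $R_n \cong U_n$ one checks directly that composition of arrows in $R_\bullet$ corresponds to the partial projection $U\times_F U\times_F U \to U\times_F U$ forgetting the middle copy, the source/target maps correspond to the two projections $U\times_F U \to U$, and the degeneracies (inserting identities via the unit section $U \to R$) correspond to the relative diagonals $U \to U\times_F U$. These are all formal consequences of how the groupoid structure on $R \rightrightarrows U$ was built from $U \times_F U$.

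The main obstacle, such as it is, is bookkeeping rather than mathematics: one must be careful about the ordering conventions (source versus target, the direction in which composition is written, and which of the outer faces drops the first arrow versus the last) so that the face maps of $Ner(h(R\rightrightarrows U))$ are literally the ones declared for $U_\bullet$, and not merely isomorphic up to a permutation of factors. I would therefore fix conventions once at the start — say, $d_0$ on $U_\bullet$ is the projection forgetting the first factor and corresponds to the target of the first arrow — and then verify the cosimplicial identities hold on the nose. Once the conventions are pinned down the identification $Ner(h(R \rightrightarrows U)) = U_\bullet$ is immediate degreewise and compatible with all face and degeneracy maps, which is the assertion of the lemma.
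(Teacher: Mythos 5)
Your proof is correct and follows essentially the same route as the paper: identify the $n$-simplices of the nerve with $R \times_U \cdots \times_U R$, then use $R \cong U \times_F U$ to rewrite this as $U_n = U \times_F \cdots \times_F U$. You are somewhat more careful than the paper about verifying that the face and degeneracy maps also match, which the paper leaves implicit.
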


\begin{proof}
By definition we have $Ob(h(R \rightrightarrows U)(S)) = Hom_{Sm/k} (S, U)$ and $Mor(h(R \rightrightarrows U)(S)) = Hom_{Sm/k} (S, R)$. The set of two composable morphisms in $h(R \rightrightarrows U)(S)$ is $(R \times_U R) (S)$
where $R \times_U R$ is the fiber product of the maps $s : R \to U$ and $t : R \to U$. More generally the set of $n$-composable morphisms in $h(R \rightrightarrows U)(S)$ is 
$R \times_U R \times_U \dots \times_U R$ ($n$ times).
Since $R \cong U \times_F U$ and the maps $s$ and $t$ are first and second projections respectively we have
$$(Ner(h(R\rightrightarrows U)))_n (S) = \underbrace{(U \times_F U) \times_U  \cdots \times_U (U \times_F U)}_{n \, \text{times}}$$ 
which is isomorphic to
$U_n$.
\end{proof}

\begin{theorem} \label{spaces}
Let $U \to F$  be an atlas for a smooth 
Deligne-Mumford stack $F$. There is a canonical \'etale local weak equivalence $U_{\bullet} \to Sp(F)$.
\end{theorem}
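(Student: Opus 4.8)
The plan is to unwind the definitions of both sides and exhibit the comparison map at the level of presheaves of groupoids, then check it is a stalkwise equivalence. Recall that $Sp(F) = Ner(h_{st}(F))|_{Sm/k}$, where $h_{st}(F)$ is the strictification of $F$ from Remark \ref{rigid}, and by Remark \ref{rigid} the restriction $F|_C \cong h_{st}(F)|_C$ is an equivalence. On the other side, the atlas $f: U \to F$ corresponds, via Lemma \ref{equivalence}, to an object of $F(U)$, equivalently a morphism of presheaves of groupoids $h(U) \to F$ (here $h(U)$ is the discrete groupoid-valued presheaf represented by $U$). Since $R \cong U \times_F U$, the groupoid object $R \rightrightarrows U$ is exactly the one whose nerve computes the \u{C}ech diagram $U_{\bullet}$ by Lemma \ref{mine}, and there is a natural map of presheaves of groupoids $h(R \rightrightarrows U) \to F$ which, by the definition of a Deligne-Mumford stack (and the fact that it is a stack, so already local), is a stalkwise equivalence of groupoids for the \'etale topology.

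First I would assemble the zig-zag: $Ner(h(R \rightrightarrows U)) = U_{\bullet}$ by Lemma \ref{mine}, and the map $h(R \rightrightarrows U) \to F$ induces $U_{\bullet} = Ner(h(R\rightrightarrows U)) \to Ner(F) \to Ner(h_{st}(F)) = Sp(F)$ after restricting to $Sm/k$. Actually one must be slightly careful: $Ner$ is applied to strict presheaves of groupoids, so I would use the strictification $h(R \rightrightarrows U)$ (which is already strict, being represented by an honest groupoid object in schemes) and compose with $h(R\rightrightarrows U) \to h_{st}(F)$, the strictified version of the atlas map obtained from Remark \ref{rigid}. Then apply $Ner$ and restrict to $Sm/k$ to get the asserted map $U_{\bullet} \to Sp(F)$.

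Next I would check this map is an \'etale local weak equivalence. Since $Ner$ sends stalkwise equivalences of groupoids to stalkwise weak equivalences of simplicial sets (the nerve is a right Quillen functor / preserves filtered colimits and equivalences of groupoids), it suffices to check $h(R \rightrightarrows U) \to h_{st}(F)$ is a stalkwise equivalence of groupoids for the \'etale topology. By Remark \ref{rigid}, $h_{st}(F) \to F$ is an equivalence on $C$, hence stalkwise, so it is enough that $h(R \rightrightarrows U) \to F$ is a stalkwise equivalence of groupoids — but this is precisely built into the definition of a Deligne-Mumford stack ($F$ admits a local equivalence $h(R \rightrightarrows U) \to F$), together with the remark that $F$ is itself a stack. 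The restriction from $Sch/k$ to $Sm/k$ does not affect stalks at \'etale points of smooth schemes since $U$ is smooth (this uses the smoothness hypothesis on $F$; it is where we need $U \in Sm/k$ rather than merely $Sch/k$).

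The main obstacle I anticipate is bookkeeping around strictification: matching up the nerve of the honest groupoid scheme $R \rightrightarrows U$ with the nerve of the strictified stack $h_{st}(F)$, and making sure the composite is genuinely a single well-defined morphism in $\bigtriangleup^{op}PSh(Sm/k)$ rather than a zig-zag. The clean way is to observe that all the presheaves of groupoids in sight ($h(R\rightrightarrows U)$, $h_{st}(F)$) are \emph{strict}, so $Ner$ applies directly and functorially, and the map $h(R\rightrightarrows U) \to h_{st}(F)$ exists because the atlas is a $1$-morphism of $2$-functors which strictifies by Remark \ref{rigid}. Once that is set up, everything reduces to the statement ``stalkwise equivalence of groupoids $\Rightarrow$ stalkwise weak equivalence of nerves,'' which is standard. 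The canonicity of the map follows from the canonicity of each ingredient (nerve, strictification, the atlas-to-$1$-morphism correspondence of Lemma \ref{equivalence}).
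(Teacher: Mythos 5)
Your proof is correct and follows essentially the same route as the paper: the definition of a Deligne-Mumford stack provides a local equivalence $h(R\rightrightarrows U)\to F$, Lemma \ref{mine} identifies $Ner(h(R\rightrightarrows U))$ with $U_{\bullet}$, and applying $Ner$ (which sends stalkwise equivalences of presheaves of groupoids to local weak equivalences of simplicial presheaves) gives the result. You merely spell out the strictification bookkeeping via Remark \ref{rigid} that the paper's two-line proof leaves implicit.
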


\begin{proof}
We know that $h(R \rightrightarrows U)$ is locally weakly equivalent to $F$. Hence the morphism $Ner(h(R \rightrightarrows U)) \to Sp(F)$ is a local weak equivalence. The claim follows now 
from lemma \ref{mine}.
\end{proof}

\begin{corollary} 
\label{atlas motive}
Let $U \to F$ be an atlas for a smooth Deligne-Mumford stack $F$. The canonical map $M(U_{\bullet}) \to M(F)$ in
$\mathbf{DM}^{\eff} (k,\mathbb{Q})$ is an isomorphism.
(This is also true integrally.)
\end{corollary}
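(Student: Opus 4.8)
The plan is to deduce this corollary directly from Theorem \ref{spaces} by applying the functor $M : \mathbf{H}^{\et}(k) \to \mathbf{DM}^{\eff}(k,\mathbb{Q})$ of Proposition \ref{motivefunctor}. Theorem \ref{spaces} provides a canonical \'etale local weak equivalence $U_{\bullet} \to Sp(F)$ in $\bigtriangleup^{op} PSh(Sm/k)$. The first step is to observe that an \'etale \emph{local} weak equivalence is in particular an \'etale \emph{motivic} weak equivalence, since the motivic model structure is a Bousfield localisation of the local one; hence $U_{\bullet} \to Sp(F)$ represents an isomorphism in $\mathbf{H}^{\et}(k)$. Applying $M$, which is a well-defined functor on $\mathbf{H}^{\et}(k)$ by Proposition \ref{motivefunctor}, we get that $M(U_{\bullet}) = M(Sp(F)) =: M(F)$ and the map between them is an isomorphism in $\mathbf{DM}^{\eff}(k,\mathbb{Q})$.

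A small point to address is the compatibility of notation: on the one hand $M(U_{\bullet})$ means $M$ applied to the simplicial smooth scheme $U_{\bullet}$ viewed as an object of $\mathbf{H}^{\et}(k)$ (so, by Proposition \ref{motivefunctor}, the complex $N\mathbb{Q}_{tr}(U_{\bullet})$), and on the other hand $M(F) = M(Sp(F))$ by definition. So one should check that the canonical map in the statement is precisely the image under $M$ of the canonical map $U_{\bullet} \to Sp(F)$ of Theorem \ref{spaces}; this is immediate from the definitions of $Sp(F)$ and of the functor $M$. Since everything is natural, the resulting arrow $M(U_{\bullet}) \to M(F)$ in $\mathbf{DM}^{\eff}(k,\mathbb{Q})$ is canonical.

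Finally, the integral assertion follows by exactly the same argument, replacing the functor $M : \mathbf{H}^{\et}(k) \to \mathbf{DM}^{\eff}(k,\mathbb{Q})$ by its integral \'etale analogue $M : \mathbf{H}^{\et}(k) \to \mathbf{DM}^{\eff,\,\et}(k,\mathbb{Z})$ from Remark \ref{remark:integral-coef-M}, which is constructed in the same way and likewise factors through $\mathbf{H}^{\et}(k)$.

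I do not anticipate a genuine obstacle here: the corollary is a formal consequence of Theorem \ref{spaces} together with the existence of the functor $M$ on $\mathbf{H}^{\et}(k)$. The only thing requiring a line of care is the passage from ``local weak equivalence'' to ``isomorphism in $\mathbf{H}^{\et}(k)$'', which is the standard fact that Bousfield localisation preserves weak equivalences of the original model structure.
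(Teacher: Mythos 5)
Your proof is correct and is exactly the paper's argument, merely spelled out in more detail: the paper's one-line proof says the corollary follows from Proposition \ref{motivefunctor} and Theorem \ref{spaces}, and your write-up fills in precisely the intermediate steps (local weak equivalence $\Rightarrow$ isomorphism in $\mathbf{H}^{\et}(k)$, then apply $M$, with the integral case handled via Remark \ref{remark:integral-coef-M}).
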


\begin{proof}
This follows from  proposition \ref{motivefunctor} and  theorem \ref{spaces}. 
\end{proof}

Let $F' \to F$ be a morphism of strict presheaves of groupoids. Let $F'_{\bullet}$ be the simplicial presheaf of groupoids such that
$F'_i := F' \times_F F' \times_F \dots \times_F F'$
($i+1$ times). Let $Ner(F'_{\bullet})$ be the bi-simplicial presheaf such that 
$Ner(F'_{\bullet})_{\bullet, i} := Ner(F'_i)$. Let $diag(Ner(F'_{\bullet}))$ be the diagonal.

\begin{lemma}
\label{etale cover}
Let $p : F' \to F$ be an \'etale, representable, surjective morphsim of Deligne-Mumford stacks (here stacks are strict presheaves of groupoids). Then the canonical morphism 
$$diag(Ner(F'_{\bullet})) \to Ner(F)$$ is an \'etale local weak equivalence. 
\end{lemma}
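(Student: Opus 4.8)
The plan is to reduce this to a statement about descent along an étale hypercover. First I would observe that, since $p: F' \to F$ is representable and étale surjective, the simplicial object $F'_\bullet$ (the Čech nerve of $p$ in the $2$-category of stacks, i.e.\ $F'_i = F' \times_F \cdots \times_F F'$) is an étale hypercover of $F$ in the local model structure on simplicial presheaves of groupoids: each transition map $F'_{i+1} \to (\text{matching object})_i$ is a pullback of $p$, hence étale surjective and representable, so a local epimorphism. Applying the nerve functor $Ner$ levelwise in the groupoid direction produces the bisimplicial presheaf $Ner(F'_\bullet)$, and taking the diagonal produces a simplicial presheaf of simplicial sets $diag(Ner(F'_\bullet))$. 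The content of the lemma is then that the augmentation $diag(Ner(F'_\bullet)) \to Ner(F)$ is an étale local weak equivalence.

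The key step is to check this on stalks for the étale topology. Fix a geometric point, i.e.\ a point $x$ of the site; I would show that the induced map of simplicial sets on stalks $(diag(Ner(F'_\bullet)))_x \to (Ner(F))_x$ is a weak equivalence. Since $Ner$ commutes with filtered colimits (it is a left adjoint on each level, or one checks directly), the stalk of $diag(Ner(F'_\bullet))$ is $diag$ applied to the bisimplicial set $Ner((F'_\bullet)_x)$, where $(F'_\bullet)_x$ is the Čech nerve of the map of groupoids $F'_x \to F_x$ obtained on stalks. Because $p$ is a local epimorphism, $F'_x \to F_x$ is an essentially surjective (and, being a pullback-stable étale map of stacks, full) functor of groupoids — in fact the relevant fact is that on a strictly henselian local ring the étale atlas admits a section, so $F'_x \to F_x$ is surjective on objects. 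For such a map of groupoids, the Čech nerve $Ner((F'_\bullet)_x)$, viewed as a bisimplicial set, has the property that its diagonal is weakly equivalent to $Ner(F_x)$: this is the classical fact that the bar construction / Čech nerve of an epimorphism of groupoids resolves the target. Concretely, one can appeal to the theorem of Moerdijk (or the standard result, e.g.\ in \cite{dhi}) that the Čech nerve of a local epimorphism is a local weak equivalence after taking diagonal, together with the observation that $Ner$ of a levelwise surjection of groupoid objects is a levelwise trivial fibration in an appropriate sense, so that the spectral sequence / Bousfield–Kan argument for the diagonal collapses.

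I would organise the argument as follows: (1) verify that $F'_\bullet \to F$ is an étale hypercover of presheaves of groupoids, by checking the local epimorphism condition on matching objects, using that $p$ is representable étale surjective and stable under base change; (2) pass to stalks and reduce to the statement that for a surjective-on-objects functor of groupoids $q: G' \to G$, the diagonal of the bisimplicial set $Ner(\operatorname{cosk}_0^{G'/G})$ is weakly equivalent to $Ner(G)$; (3) prove that groupoid statement, either by a direct contraction (the Čech nerve of $G' \to G$ in each simplicial degree is a groupoid whose nerve is weakly contractible relative to $G$ because a section exists locally), or by citing the descent result of \cite{dhi} in the form that Čech covers are cofinal among hypercovers and that the diagonal of the nerve computes the homotopy colimit. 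The main obstacle I anticipate is step (3): one must be careful that $Ner$ and $diag$ interact correctly, in particular that the bisimplicial set obtained is \emph{proper} or that a Bousfield–Kan type argument applies, so that levelwise weak equivalences in the groupoid direction yield a weak equivalence on diagonals; handling the non-properness, or else producing an explicit simplicial contraction on stalks where a section of $p$ exists, is the delicate point, and I would likely do it by the explicit section argument since étale-locally $F' \to F$ splits.
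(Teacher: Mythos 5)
Your approach differs genuinely from the paper's. The paper does not pass to stalks or reduce to an abstract groupoid-theoretic lemma: it chooses an atlas $a : U \to F$, forms the bisimplicial algebraic space with $(j,i)$-entry $U_j \times_F F'_i$, and identifies its diagonal with both $diag(Ner(F'_{\bullet}))$ (the columns $U'_{\bullet,i} \to F'_i$ are \'etale \u{C}ech hypercovers of the stacks $F'_i$) and with $U_{\bullet}$ (the rows $U'_{j,\bullet} \to U_j$ are \'etale \u{C}ech hypercovers of schemes), via the Bousfield--Kan identification of diagonals with homotopy colimits. Everything thus reduces to the already-established fact that the \u{C}ech nerve of an \'etale atlas of a Deligne--Mumford stack is a local weak equivalence, which in turn rests on the scheme-level statement of \cite[Prop.~6.12]{mvw}. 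The atlas is doing real work: it converts the groupoid question into one about \u{C}ech nerves of schemes.

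Your step (3), as you describe it, has a gap --- one you partially anticipate. First, the parenthetical claim that $F'_x \to F_x$ is full is false: already for an atlas $U \to F$ of a stack with nontrivial inertia, $U_x$ is a discrete groupoid while $F_x$ is not. What you do get over a strictly henselian local ring is surjectivity on objects, but that is not enough for the ``explicit contraction'' you propose: the extra-degeneracy argument for \u{C}ech nerves requires a genuine section of the functor of groupoids, not merely a section on objects, and no such section exists in general. Your alternative --- citing \cite{dhi} --- is a statement about simplicial presheaves of \emph{sets}; to transport it here you must reconcile $Ner$ with the $2$-categorical fibre products used to form $F'_{\bullet}$, and this is exactly where the difficulty sits. (With strict fibre products the statement you want is simply false: for $\ast \to B(\mathbb{Z}/2)$ the strict \u{C}ech nerve is constant equal to $\ast$, so the diagonal of the nerves is contractible, while $Ner(B(\mathbb{Z}/2))$ is not.) The groupoid fact you need --- that the \u{C}ech nerve, with homotopy fibre products, of a $\pi_0$-surjective map of groupoids resolves the target --- is true, being the $1$-truncated instance of the effective-epimorphism descent theorem for spaces, but it requires its own argument (for instance a fibrewise contractibility computation over the objects of $F_x$), not a splitting. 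Either supply such an argument in step (3), or follow the paper and factor through an atlas so that everything becomes a \u{C}ech nerve of schemes.
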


\begin{proof}
Let $U \to F$ be an atlas and $U_{\bullet}$ be the associated \u{C}ech simplicial scheme. Let $U'_{\bullet, \bullet}$ be the bi-simplicial algebraic space such that
$U'_{\bullet,i} := U_{\bullet} \times_F F'_i$ for $i \geq 0$. Hence, $U'_{j, \bullet} := U_j \times_F F'_{\bullet}$ for $j \geq 0$. There are natural morphisms 
$diag(U'_{\bullet, \bullet}) \to diag(Ner(F'_{\bullet}))$ and $diag(U'_{\bullet, \bullet}) \to U_{\bullet}$. For $i,j \geq 0$, $U'_{\bullet,i} \to F'_i$ and
$U'_{j, \bullet} \to U_j$ are \'etale \u{C}ech hypercovering, hence $U'_{\bullet,i} \to Ner(F'_i)$ and $U'_{j, \bullet} \to U_j$ are \'etale local weak equivalences.
By \cite[XII.3.3]{bk} $$diag(U'_{\bullet, \bullet}) \cong hocolim_{n \in \bigtriangleup} (U'_{\bullet, n}) \cong diag(Ner(F'_{\bullet}))$$ and
$$diag(U'_{\bullet, \bullet}) \cong hocolim_{n \in \bigtriangleup} (U'_{n, \bullet}) \cong U_{\bullet}.$$ 
This proves the lemma.
\end{proof}

\section{Motives of Deligne-Mumford Stacks, I}
\label{2}
In this section, we first show that the motive of a separated Deligne-Mumford stack is naturally isomorphic to the motive of its coarse moduli space. 
We also prove blow-up and projective bundle formulas
for smooth Deligne-Mumford stacks. We end the section with the construction of the Gysin triangle associated with a smooth
closed substack $Z$ of a smooth Deligne-Mumford stack $F$.

\subsection{Motive of coarse moduli space}
Let $F$ be a separated Deligne-Mumford stack. A coarse moduli space for $F$ is a map 
$\pi : F \to X$ to an algebraic space such that $\pi$ is initial among maps from $F$ to algebraic spaces, and for every
algebraically closed field $k$ the map $[F(k)] \to X(k)$ is bijective (where $[F(k)]$ denotes the
set of isomorphism classes of objects in the small category $F(k)$). If $F$ is a separated Deligne-Mumford stack over a 
field $k$ of characteristic $0$, then a
coarse moduli space $\pi : F \to X$ exists.

Let $X$ be a scheme and let $G$ be a group acting on $X$. Then $G$ acts on the presheaf $\mathbb{Q}_{tr}(X)$. Let $\mathbb{Q}_{tr}(X)_G$ be the $G$-coinvariant presheaf, such that for any $Y \in Sm/k$ we have
$\mathbb{Q}_{tr}(X)_G(Y) := (\mathbb{Q}_{tr}(X)(Y))_G$.

\begin{lemma}
\label{motive of quotient stacks}
Let $X$ be a smooth quasi-projective scheme and let $G$ be a finite group acting on $X$. Let $[X/G]$ be the quotient Deligne-Mumford stack 
and $X/G$ be the quotient scheme. Then
\begin{enumerate}
 \item $M([X/G]) \cong \mathbb{Q}_{tr}(X)_G$ in $\mathbf{DM}^{\eff}(k,\mathbb{Q})$;
 \item $\mathbb{Q}_{tr}(X/G) \cong \mathbb{Q}_{tr}(X)_G$ as presheaves.
\end{enumerate}
Hence the canonical morphism $M([X/G]) \to \mathbb{Q}_{tr}(X/G)$ is an isomorphism in $\mathbf{DM}^{\eff}(k,\mathbb{Q})$.  
\end{lemma}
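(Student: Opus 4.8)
The plan is to prove (1) and (2) separately and then observe that the resulting isomorphism $M([X/G]) \cong \mathbb{Q}_{tr}(X/G)$ is the canonical one.

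\emph{Proof of (1).} I would take the atlas $X \to [X/G]$, whose associated groupoid object is the action groupoid $X \times G \rightrightarrows X$ (source $=$ projection, target $=$ action). By Lemma \ref{mine} the \u{C}ech simplicial scheme $U_{\bullet}$ attached to this atlas has $U_n \cong X \times G^n$, so Corollary \ref{atlas motive} gives $M([X/G]) \cong M(U_{\bullet}) = N\mathbb{Q}_{tr}(U_{\bullet})$. Since each $G^n$ is a finite set, $\mathbb{Q}_{tr}(X \times G^n) \cong \mathbb{Q}_{tr}(X) \otimes_{\mathbb{Q}} \mathbb{Q}[G]^{\otimes n}$, and with the face and degeneracy maps inherited from the action groupoid the complex $N\mathbb{Q}_{tr}(U_{\bullet})$ is the standard (bar) complex computing the group homology $H_*(G, \mathbb{Q}_{tr}(X))$. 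Because $|G|$ is invertible in $\mathbb{Q}$ this complex is a resolution of its zeroth homology: evaluated on any $Y \in Sm/k$ it is the bar complex of the $\mathbb{Q}[G]$-module $\mathbb{Q}_{tr}(X)(Y)$, whose higher homology vanishes and whose $H_0$ is the $G$-coinvariants. Hence the augmentation $N\mathbb{Q}_{tr}(U_{\bullet}) \to \mathbb{Q}_{tr}(X)_G$ is a section-wise quasi-isomorphism of complexes of presheaves with transfers — note $\mathbb{Q}_{tr}(X)_G$ is a presheaf with transfers, being the cokernel in $PST(k,\mathbb{Q})$ of $\bigoplus_{g \in G}(g - \mathrm{id})$ — so an isomorphism in $\mathbf{DM}^{\eff}(k,\mathbb{Q})$.

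\emph{Proof of (2).} First reduce to $X$ connected and $G$ acting faithfully: passing to the image of $G$ in $\mathrm{Aut}(X)$ changes neither $X/G$, nor $\mathbb{Q}_{tr}(X)_G$, nor the complex above, and connected components may be handled separately. Then $f : X \to X/G$ is finite surjective of degree $|G|$ (Artin's theorem: $k(X)/k(X/G)$ is Galois with group $G$), and its graph $\Gamma_f \cong X$ is finite surjective over both $X$ and $X/G$; so $\phi := [\Gamma_f] \in Cor(X, X/G)$ and $\tau := [{}^t\Gamma_f] \in Cor(X/G, X)$ induce maps of presheaves with transfers. As $f g = f$, the map $\phi_*$ is $G$-invariant and factors through $\overline{\phi}_* : \mathbb{Q}_{tr}(X)_G \to \mathbb{Q}_{tr}(X/G)$, which is the canonical morphism of the statement. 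A computation of compositions of correspondences gives $\phi \circ \tau = (\deg f)\,[\Delta_{X/G}] = |G| \cdot \mathrm{id}$ in $Cor(X/G, X/G)$ and $\tau \circ \phi = [X \times_{X/G} X] = \sum_{g \in G} [\Gamma_g]$ in $Cor(X, X)$; the last identity holds because set-theoretically $X \times_{X/G} X$ is the union of the graphs $\Gamma_g$, and $f$ is generically \'etale, so these are all of its components and each occurs with multiplicity $1$. Since $[\Gamma_g]$ acts on $\mathbb{Q}_{tr}(X)$ as the automorphism $g$, the endomorphism $\tau_* \phi_* = \sum_g g$ of $\mathbb{Q}_{tr}(X)$ induces $|G| \cdot \mathrm{id}$ on $\mathbb{Q}_{tr}(X)_G$. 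Combined with $\phi_* \tau_* = |G| \cdot \mathrm{id}$, this shows that $\overline{\phi}_*$ is an isomorphism, with inverse $|G|^{-1}$ times the map induced by $\tau_*$; the same computation shows that the resulting isomorphism $M([X/G]) \cong \mathbb{Q}_{tr}(X)_G \cong \mathbb{Q}_{tr}(X/G)$ is the canonical one.

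The step I expect to be the main obstacle is (2): since $f$ is not flat in general one cannot pull cycles back along it directly, which is why the argument is routed through the transpose graph ${}^t\Gamma_f$, and the crucial point is the identity $\tau \circ \phi = \sum_g [\Gamma_g]$, which relies on generic \'etaleness of the quotient map. Alternatively, (2) may be proved by a direct cycle computation that avoids the reduction to faithful actions: use $(Y \times X)/G \cong Y \times (X/G)$ for $Y$ smooth, the bijection between prime cycles on $X/G$ and $G$-orbits of prime cycles on $X$, positivity of the relevant degrees of finite morphisms, and the fact that the $G$-coinvariants of a permutation $\mathbb{Q}[G]$-module is free on the orbits. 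Part (1) is essentially formal once $U_n \cong X \times G^n$ is identified.
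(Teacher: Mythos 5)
Your proof is correct and follows essentially the same route as the paper: for (1) you identify the \u{C}ech complex of the atlas $X \to [X/G]$ with the bar complex computing $H_*(G,\mathbb{Q}_{tr}(X))$ and use that $|G|$ is invertible in $\mathbb{Q}$; for (2) you use $\Gamma_\pi$ and ${}^t\Gamma_\pi$ for $\pi : X \to X/G$ to realize $\mathbb{Q}_{tr}(X/G)$ as the image of the idempotent $\frac{1}{|G|}\sum_{g\in G} g$ acting on $\mathbb{Q}_{tr}(X)$, exactly as the paper does.
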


\begin{proof}
To deduce (1), we observe that the morphism $X \to [X/G]$ sending $X$ to the trivial $G$-torsor $X \times G \to X$ is an \'etale atlas. Let $X_{\bullet}$ be the corresponding 
\u{C}ech simplicial scheme. Then 
$\mathbb{Q}_{tr}(X_{\bullet}) \cong M([X/G])$ in $\mathbf{DM}^{\eff}(k,\mathbb{Q})$. Moreover, 
$\mathbb{Q}_{tr}(X_{\bullet})(Y) \cong (\mathbb{Q}_{tr}(X)(Y)\otimes \mathbb{Q}[EG])/G$.
Hence the complex $\mathbb{Q}_{tr}(X_{\bullet})(Y)$ computes the homology of $G$ with coefficient in the $G$-module $\mathbb{Q}_{tr}(X_{\bullet})(Y)$. Since $G$ is finite and we work
with rational coefficients,
we have $ \mathbb{Q}_{tr}(X_{\bullet})(Y) \cong (\mathbb{Q}_{tr}(X)(Y))_G$ in the derived category of chain complexes of $\mathbb{Q}$-vector spaces.

To deduce (2), we observe that the canonical quotient morphism $\pi : X \to X/G$ is finite and surjective. Let $m$ be the generic degree of $\pi$, then
the morphism $\Gamma_{\pi} : \mathbb{Q}_{tr}(X) \to \mathbb{Q}_{tr}(X/G)$ has a section $\frac{1}{m} ^t\Gamma_{\pi}$. Hence $\mathbb{Q}_{tr}(X/G)$ is isomorphic to the image of the projector $\frac{1}{m} ^t\Gamma_{\pi} \circ \Gamma_{\pi}$. 
But  $\frac{1}{m} ^t\Gamma_{\pi} \circ \Gamma_{\pi} =  \frac{1}{|G|}\sum_{g \in G} g$ whose image is 
isomorphic to $\mathbb{Q}_{tr}(X)_G$. 
\end{proof}

\begin{remark}
In the proof above, the composition $\frac{1}{m} ^t\Gamma_{\pi} \circ \Gamma_{\pi}$ is well defined (cf.~\cite[Definition 1A.11]{mvw}). Indeed, since $X/G$ is normal, the finite correspondence
$^t\Gamma_{\pi}$ is a relative cycle over $X/G$ by \cite[Theorem 1A.6]{mvw}.
\end{remark}

\begin{theorem}
Let $F$ be a separated smooth Deligne-Mumford stack over a field $k$ of characteristic $0$. Let $\pi : F \to X$ be the coarse moduli space. Then the natural morphism
$M(\pi) : M(F) \to \mathbb{Q}_{tr}(X)$ is an isomorphism in $\mathbf{DM}^{\eff}(k,\mathbb{Q})$. 
\end{theorem}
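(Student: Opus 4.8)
The plan is to reduce, by étale descent on the coarse moduli space $X$, to the quotient‑stack case established in Lemma~\ref{motive of quotient stacks}. The geometric ingredient is the local structure theorem for separated Deligne--Mumford stacks in characteristic zero: there is an étale surjection $p\colon X_0\to X$ with $X_0$ a disjoint union of affine schemes such that $F_0:=F\times_X X_0$ is a disjoint union of quotient stacks $[V/G]$, where $G$ is a finite group, $V\to V/G$ is finite, and $V/G$ is the corresponding component of $X_0$; in particular $X_0$ is the coarse moduli space of $F_0$, and $F_0\to X_0$ is its coarse moduli map. Each such $V$ is smooth, since $V\to F_0\to F$ is étale and $F$ is smooth, and affine, being finite over the affine scheme $V/G$. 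As the formation of coarse moduli spaces commutes with flat base change, this description persists after base change along the \u{C}ech nerve of $p$.

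First I would organise the descent. Let $X_\bullet$ be the \u{C}ech nerve of $p$, so that $X_n=X_0\times_X\cdots\times_X X_0$ ($n+1$ factors). Each $X_n$ is an affine scheme: the diagonal of the separated algebraic space $X$ is a closed immersion, so $X_n$ is a closed subscheme of the affine scheme $X_0\times_k\cdots\times_k X_0$ ($n+1$ factors). Put $F_n:=F\times_X X_n$; this is again a separated smooth Deligne--Mumford stack, and its coarse moduli map is the projection $\pi_n\colon F_n\to X_n$. Regarding $p$ as inducing an étale, representable, surjective morphism $F_0\to F$ and unravelling the fibre products, one has $F_0\times_F\cdots\times_F F_0\cong F_n$ ($n+1$ factors). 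Hence Lemma~\ref{etale cover}, together with the fact that $M$ --- the left derived functor of $N\mathbb{Q}_{tr}$ (Proposition~\ref{motivefunctor}) --- commutes with homotopy colimits, and so with the diagonals of bisimplicial objects (\cite[XII.3.3]{bk}), gives
$$M(F)\;\cong\; M\big(diag(Ner(F_\bullet))\big)\;\cong\; hocolim_{n\in\bigtriangleup}\, M(F_n).$$
On the other hand, étale descent for $\mathbb{Q}_{tr}$ with rational coefficients --- the \u{C}ech computation in the proof of Proposition~\ref{motivefunctor}, applied to the étale hypercover $X_\bullet\to X$ and valid also when $X$ is a (possibly singular) algebraic space --- gives $\mathbb{Q}_{tr}(X)\cong hocolim_{n\in\bigtriangleup}\,\mathbb{Q}_{tr}(X_n)$. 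Both presentations are compatible with the morphisms induced by $\pi$ and the $\pi_n$, so $M(\pi)$ is the homotopy colimit over $\bigtriangleup$ of the maps $M(\pi_n)\colon M(F_n)\to\mathbb{Q}_{tr}(X_n)$; it is therefore enough to prove that each $M(\pi_n)$ is an isomorphism.

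Here Lemma~\ref{motive of quotient stacks} takes over. By the local structure theorem and base change, $F_n=F_0\times_{X_0}X_n$ is a disjoint union of quotient stacks $[(V\times_{X_0}X_n)/G]$, in which $V\times_{X_0}X_n$ is smooth (being étale over $F$) and affine (a closed subscheme of the affine scheme $V\times_k X_n$), hence smooth quasi-projective over $k$, and whose quotient $(V\times_{X_0}X_n)/G$ is $X_n$. Applying Lemma~\ref{motive of quotient stacks} to each summand identifies $M(\pi_n)$ with the canonical isomorphism $M(F_n)\xrightarrow{\ \sim\ }\mathbb{Q}_{tr}(X_n)$. Taking the homotopy colimit over $\bigtriangleup$ now shows that $M(\pi)$ is an isomorphism.

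I expect the main obstacle to be this geometric input rather than the motivic formalism: one must know that $F$ is, étale-locally over its coarse space, a quotient stack $[V/G]$ with $V/G$ realising the coarse space --- which is exactly where characteristic zero is used --- and with $V$ affine, so that Lemma~\ref{motive of quotient stacks} is applicable. Secondary issues deserving care are étale descent for $\mathbb{Q}_{tr}(X)$ when $X$ is only an algebraic space, and verifying that the two homotopy-colimit presentations are genuinely compatible with $M(\pi)$ and the $M(\pi_n)$, which comes down to tracing through the bisimplicial object underlying Lemma~\ref{etale cover}.
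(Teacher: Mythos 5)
Your proposal is correct and follows essentially the same route as the paper's own proof: use the étale-local structure of $F$ over its coarse space (via Toen's thesis Prop.~1.17 and Vistoli Prop.~2.8, which show that étale-locally on $X$ the stack is a quotient $[V/G]$ with $V$ smooth quasi-projective and $V/G$ the coarse space), run Čech descent along the induced étale cover via Lemma~\ref{etale cover}, and reduce simplicial-degree by simplicial-degree to Lemma~\ref{motive of quotient stacks}. The only difference is cosmetic: you take the $X_n$ affine and argue $V\times_{X_0}X_n$ is affine (hence quasi-projective), whereas the paper simply takes the $X_i$ quasi-projective from the cited references; both choices verify the quasi-projectivity hypothesis in Lemma~\ref{motive of quotient stacks}, and you spell out a compatibility check (that each term of the Čech nerve remains a quotient by a finite group of a smooth quasi-projective scheme) that the paper leaves implicit.
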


\begin{proof}
By \cite[Prop 1.17]{t1} and \cite[Prop 2.8]{vi}, there exists an \'etale covering $(U_i)_{i \in I}$ of $X$, such that $U_i \cong X_i/H_i$ and 
$F_i := U_i \times_X F \cong [X_i / H_i]$ for quasi-projective smooth schemes $X_i$ and finite groups $H_i$. Let $F' := \coprod F_i$ and $X' := \coprod X_i/H_i$. 
Then by lemma \ref{etale cover},
$M(diag(Ner(F'_{\bullet}))) \cong M(F)$ in $\mathbf{DM}^{\eff}(k,\mathbb{Q})$.
Similarly, $\mathbb{Q}_{tr}(X'_{\bullet}) \cong \mathbb{Q}_{tr}(X)$. 
To show that $M(F) \cong \mathbb{Q}_{tr}(X)$, it is then enough to 
show that $M(Ner(F'_n)) \cong M(X'_n)$. Hence we are reduced to the case $F=[X/G]$ which follows from lemma \ref{motive of quotient stacks}.
\end{proof}

\subsection{Motive of a projective bundle}
Let $\mathcal{E}$ be a vector bundle of rank $n+1$ on a smooth finite type Deligne-Mumford stack $F$ and let $Proj(\mathcal{E})$ denote the associated projective bundle over $F$. 

\begin{theorem} \label{projective bundle} 
There exists a canonical isomorphism in $\mathbf{DM}^{\eff}(k,\mathbb{Q})$:
$$M(Proj(\mathcal{E})) \to \bigoplus_{i=0}^n  M(F) \otimes \mathbb{Q}(i)[2i].$$

\end{theorem}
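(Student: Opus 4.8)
The plan is to reduce the projective bundle formula for a smooth Deligne-Mumford stack to the already-known projective bundle formula for smooth schemes, using the étale atlas and the descent machinery established in Section~\ref{1}. First I would choose an étale atlas $u : U \to F$ with associated \v Cech simplicial scheme $U_\bullet$, so that $M(U_\bullet) \cong M(F)$ by Corollary~\ref{atlas motive}. Pulling back $\mathcal E$ along each $U_i \to F$ gives a compatible family of vector bundles $\mathcal E_i$ of rank $n+1$ on the smooth schemes $U_i$, and since forming the projective bundle commutes with base change, the simplicial scheme $Proj(\mathcal E)_\bullet := Proj(\mathcal E) \times_F U_\bullet$ is exactly the \v Cech simplicial scheme of the étale atlas $Proj(\mathcal E) \times_F U \to Proj(\mathcal E)$. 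Hence, again by Corollary~\ref{atlas motive}, $M(Proj(\mathcal E)_\bullet) \cong M(Proj(\mathcal E))$.

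Next I would apply the classical projective bundle formula for smooth schemes (\cite[Theorem 15.12]{mvw}) degreewise: for each simplicial degree $i$ there is a canonical isomorphism
\[
M\bigl(Proj(\mathcal E_i)\bigr) \xrightarrow{\ \sim\ } \bigoplus_{j=0}^n M(U_i) \otimes \mathbb Q(j)[2j],
\]
given by $\sum_j c_1(\mathcal O(1))^j \cup u_i^*(-)$. The key point is that these isomorphisms are natural in $U_i$ with respect to the face and degeneracy maps of $U_\bullet$ — this follows from the functoriality of the first Chern class and the projection formula — so they assemble into an isomorphism of simplicial objects in $\mathbf{DM}^{\eff}(k,\mathbb Q)$, or more precisely an isomorphism between $N\mathbb Q_{tr}(Proj(\mathcal E)_\bullet)$ and the total complex of $N\mathbb Q_{tr}(U_\bullet) \otimes \bigoplus_{j=0}^n \mathbb Q(j)[2j]$. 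Taking the induced map on motives and using that $M$ commutes with the relevant (homotopy) colimits, together with the two identifications above, yields the desired isomorphism $M(Proj(\mathcal E)) \cong \bigoplus_{j=0}^n M(F) \otimes \mathbb Q(j)[2j]$.

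The main obstacle I anticipate is making the degreewise isomorphisms genuinely compatible with the simplicial structure maps \emph{on the nose}, so that they define a morphism of complexes rather than merely a compatible family in the homotopy category (which would only give, a priori, a collection of triangles and not a clean isomorphism). The cleanest way around this is to work at the level of presheaves with transfers: the Chern class $c_1(\mathcal O(1))$ on the simplicial scheme $Proj(\mathcal E)_\bullet$ is pulled back from a class on $F$ (or rather is compatible across the simplicial degrees because the line bundles $\mathcal O(1)$ on the $Proj(\mathcal E_i)$ are pulled back from one another), and the explicit correspondences realizing $\bigcup c_1(\mathcal O(1))^j$ are strictly functorial. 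One then checks that the resulting map of bisimplicial (or simplicial) objects is a termwise isomorphism after applying $N\mathbb Q_{tr}$, hence an isomorphism after passing to $\mathbf{DM}^{\eff}(k,\mathbb Q)$. An alternative, should the strict compatibility prove delicate, is to invoke a spectral sequence / \v Cech descent argument: both sides are computed by the $\mathbb Q$-linear \v Cech complex associated to $U_\bullet$, the degreewise isomorphisms induce an isomorphism of the associated spectral sequences, and a standard convergence argument finishes the proof; but the first approach is preferable as it produces the explicit canonical map asserted in the statement.
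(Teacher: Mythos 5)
You have the right overall strategy (atlas, \v Cech complex, degreewise projective bundle formula for schemes) and, crucially, you correctly identify the real difficulty: the degreewise isomorphisms must be assembled into a single strict morphism of simplicial presheaves with transfers, not merely a compatible family up to homotopy. However, your proposed resolution of that difficulty contains a genuine gap. You assert that ``the explicit correspondences realizing $\bigcup c_1(\mathcal{O}(1))^j$ are strictly functorial,'' but the map $c_1(\mathcal{O}(1))$ that enters \cite[Construction 15.10]{mvw} is an element of $\hom_{\mathbf{DM}^{\eff}}(M(Proj(\mathcal{E}_i)),\mathbb{Q}(1)[2])\cong Pic(Proj(\mathcal{E}_i))\otimes\mathbb{Q}$, i.e.\ a homotopy class of maps of complexes, and there is no canonical way to choose representatives that are \emph{on the nose} compatible with all face and degeneracy maps. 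That is precisely the point at which a genuine argument is needed, and your proof as written simply asserts the thing to be proved.

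The paper's proof resolves this differently, and the difference is the substance of the argument. It first uses Corollary \ref{picard} to produce a \emph{single} map $\tau: M(Proj(\mathcal{E}))\to\mathbb{Q}(1)[2]$ attached to $\mathcal{O}_{Proj(\mathcal{E})}(1)$ on the whole stack, and then uses the fact that $C_*(\mathbb{P}^1,\infty)$ is a fibrant model for $\mathbb{Q}(1)[2]$ in the projective motivic model structure to lift $\tau$ to an actual morphism of complexes $\tau':N\mathbb{Q}_{tr}(V_\bullet)\to C_*(\mathbb{P}^1,\infty)$, equivalently (by Dold--Kan) a strict morphism of simplicial presheaves with transfers. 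Taking smash powers of $\tau'$ and combining with the projection $\mathbb{Q}_{tr}(V_\bullet)\to\mathbb{Q}_{tr}(U_\bullet)$ yields a strict morphism $\sigma$, and only then does one check degreewise that $\sigma$ agrees with the MVW construction \emph{up to $\mathbb{A}^1$-weak equivalence} (which is all that is needed to cite \cite[Theorem 15.12]{mvw}). So the strict morphism is constructed once, globally, by fibrancy of the target; the degreewise comparison is done only up to weak equivalence. To repair your proof you should follow this order of operations rather than attempting to strictify the degreewise Chern class maps directly, since the latter is what you would still have to prove.

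Your alternative suggestion via a spectral sequence argument also does not, as stated, produce the canonical morphism asserted by the theorem: a compatible family of isomorphisms of $E_1$-pages gives isomorphic abutments but not a preferred isomorphism in a specified direction, whereas the theorem asserts a canonical map defined by the Chern classes of $\mathcal{O}(1)$.
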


\begin{proof}
Let $a : U \to F$ be an atlas of $F$ and $V:= Proj(a^* (\mathcal{E})) \to Proj(\mathcal{E})$ be the induced atlas of $Proj(\mathcal{E})$. 

The line bundle $\mathcal{O}_{Proj(\mathcal{E})}(1)$ induces a canonical map 
$$\tau:M(Proj(\mathcal{E})) \to \mathbb{Q}(1)[2]$$ 
in $\mathbf{DM}^{\eff}(k,\mathbb{Q})$ by corollary \ref{picard} below. Here we take 
$\mathbb{Q}(1)[2]:=C_*(\mathbb{P}^1, \infty)=N(\underline{Hom} (\bigtriangleup^{\bullet}, \mathbb{Q}_{tr}(\mathbb{P}^1, \infty)))$ where $N$ is the normalized chain complex and $\underline{Hom}$ is the internal $Hom$ (see 
\cite[page 15-16]{mvw} for $\mathbb{Q}_{tr}(\mathbb{P}^1, \infty)$ and $C_*$). As the complex $C_*(\mathbb{P}^1, \infty)$ is fibrant for the projective motivic model structure (see \cite[Corollary 2.155]{a2}), $\tau$ is represented by a morphism 
$$\tau':N(\mathbb{Q}_{tr} (V_{\bullet})) 
\to C_*(\mathbb{P}^1, \infty)$$ 
in $K(PST(k))$ where $V_{\bullet}$ is the \u{C}ech complex associated to the atlas $a : V \to Proj(\mathcal{E})$. 
By the Dold-Kan correspondence we get a morphism 
\begin{equation}
\label{tao}
\tau':\mathbb{Q}_{tr} (V_{\bullet}) \to \underline{Hom}(\bigtriangleup^{\bullet}, \mathbb{Q}_{tr}(\mathbb{P}^1, \infty))
\end{equation} 
in $\bigtriangleup^{op}(PST(k))$. 
Note that in simplicial degree zero, the induced map 
$\mathbb{Q}_{tr}(V) \to \mathbb{Q}_{tr}(\mathbb{P}^1, \infty)$ represents the class of 
$\mathcal{O}_{Proj(\mathcal{E}|_{V})}(1)$. 
Using the commutativity of
$$\xymatrix{\mathbb{Q}_{tr} (V_i) \ar[r] \ar[d] & 
\underline{Hom}(\bigtriangleup^{i},\mathbb{Q}_{tr}(\mathbb{P}^1, \infty)) \ar[d]^{we}\\
\mathbb{Q}_{tr}(V) \ar[r]& \mathbb{Q}_{tr}(\mathbb{P}^1,\infty)}$$
the upper horizontal morphism also represents the class of $\mathcal{O}_{Proj(\mathcal{E}|_{V_i})}(1)$ modulo the $\mathbb{A}^1$-weak equivalence $we$.

The morphism of simplicial presheaves \eqref{tao}  induces a morphism 
$$(\tau')^m : \mathbb{Q}_{tr} (\underbrace{V_{\bullet} \times \dots \times V_{\bullet}}_{m \, \text{times}}) \to \underline{Hom} ( \underbrace{\bigtriangleup^{\bullet} \times \dots \times \bigtriangleup^{\bullet}}_{m \, \text{times}}, \mathbb{Q}_{tr}(\mathbb{P}^1, \infty)^{\wedge m})$$
between multisimplicial presheaves with transfers for every positive integer $m$.
The diagonals $\bigtriangleup^{\bullet} \to diag(\bigtriangleup^{\bullet} \times \dots \times \bigtriangleup^{\bullet})$ and $ V_{\bullet} \to diag(V_{\bullet} \times \dots \times V_{\bullet})$ give a morphism 
$$(\tau' )^m : \mathbb{Q}_{tr}(V_{\bullet}) \to  \underline{Hom} ( \bigtriangleup^{\bullet}, \mathbb{Q}_{tr}(\mathbb{P}^1, \infty)^{\wedge m}).$$

Moreover the morphism $\mathbb{Q}_{tr} (V_{\bullet}) \to \mathbb{Q}_{tr} (U_{\bullet})$ gives a morphism
$$\sigma : \mathbb{Q}_{tr}(V_{\bullet}) \to diag(\bigoplus_{m=0}^n \underline{Hom} ( \bigtriangleup^{\bullet}, \mathbb{Q}_{tr}(\mathbb{P}^1, \infty)^{\wedge m} \otimes \mathbb{Q}_{tr}(U_{\bullet})))$$ of simplicial persheaves with transfers. Here $U_{\bullet}$ is the associated \u{C}ech complex of $a:U\to F$.

In  degree $i$ the morphism $\sigma$ coincides with the one from \cite[Construction 15.10]{mvw} modulo the $\mathbb{A}^1$-weak equivalence 
$$\bigoplus_{m=0}^n \underline{Hom}(\bigtriangleup^{i}, \mathbb{Q}_{tr}(\mathbb{P}^1, \infty)^{\wedge m} \otimes \mathbb{Q}_{tr}(U_i)) \to \bigoplus_{m=0}^n  \mathbb{Q}_{tr}(\mathbb{P}^1, \infty)^{\wedge m} \otimes \mathbb{Q}_{tr}(U_i).$$
It follows from \cite[Theorem 15.12]{mvw} that $\sigma$ induces $\mathbb{A}^1$-weak equivalence after passing to the normalized complex. This proves the theorem.
\end{proof}

\subsection{Motives of blow-ups} 
Let $X$ be a $k$-scheme. Let $X' \to X$ be a blow-up with center $Z$ and $Z' := Z \times_X X'$ be the exceptional divisor. Then \cite[Theorem 13.26]{mvw} can be rephrased as follows. (Recall that $char(k)=0$.)

\begin{theorem} \label{homotopy}
The following commutative diagram
$$\xymatrix{\mathbb{Z}_{tr} (Z') \ar[r] \ar[d] &
\mathbb{Z}_{tr} (X') \ar[d] \\
\mathbb{Z}_{tr} (Z) \ar[r]  &\mathbb{Z}_{tr} (X)}$$
is homotopy co-cartesian (with respect to the \'etale $\mathbb{A}^1$-local model structure).
\end{theorem}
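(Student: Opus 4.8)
The plan is to deduce the statement from the blow-up distinguished triangle of \cite[Theorem 13.26]{mvw}; the remaining content is purely formal, consisting of the dictionary between distinguished triangles and homotopy co-cartesian squares together with the passage from the Nisnevich to the \'etale localisation.

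First I would recall the general principle that, for a commutative square in the triangulated homotopy category of a stable model structure, being homotopy co-cartesian is the same as the associated sequence of two maps extending to a distinguished triangle. Applied to the square in question, all of whose entries are representable presheaves with transfers concentrated in degree zero and hence cofibrant for the projective model structures on complexes of presheaves with transfers, this reduces the theorem to producing a distinguished triangle
$$\mathbb{Z}_{tr}(Z') \longrightarrow \mathbb{Z}_{tr}(X') \oplus \mathbb{Z}_{tr}(Z) \longrightarrow \mathbb{Z}_{tr}(X) \longrightarrow \mathbb{Z}_{tr}(Z')[1]$$
in the appropriate motivic category. For the Nisnevich $\mathbb{A}^1$-local model structure this is exactly (a reformulation of) \cite[Theorem 13.26]{mvw}, whose hypotheses are met because resolution of singularities is available in characteristic zero; so the square is homotopy co-cartesian for the Nisnevich $\mathbb{A}^1$-local structure, i.e.\ in $\mathbf{DM}^{\eff}(k,\mathbb{Z})$ in the notation of \ref{remark:integral-coef-M}.

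Next I would upgrade this to the \'etale $\mathbb{A}^1$-local model structure. That structure is a left Bousfield localisation of the Nisnevich $\mathbb{A}^1$-local one: the identity functor from the latter to the former is left Quillen, it leaves the cofibrations --- and hence the cofibrant objects --- unchanged, and it only enlarges the class of weak equivalences. The induced localisation $\mathbf{DM}^{\eff}(k,\mathbb{Z}) \to \mathbf{DM}^{\eff,\,\et}(k,\mathbb{Z})$ is therefore triangulated, so it carries the distinguished triangle above to a distinguished triangle; equivalently, the mapping cone of $\mathbb{Z}_{tr}(Z') \to \mathbb{Z}_{tr}(X') \oplus \mathbb{Z}_{tr}(Z) \to \mathbb{Z}_{tr}(X)$ is Nisnevich $\mathbb{A}^1$-locally acyclic and hence, a fortiori, \'etale $\mathbb{A}^1$-locally acyclic. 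Consequently the square is homotopy co-cartesian also for the \'etale $\mathbb{A}^1$-local model structure, which is the assertion.

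I do not anticipate a genuine obstacle here: all the geometric input is packaged in \cite[Theorem 13.26]{mvw}, and the only points demanding care are the equivalence ``homotopy co-cartesian square $\Leftrightarrow$ distinguished triangle'' and the stability of this property under a further Bousfield localisation --- both formal, the latter precisely because such a localisation alters neither the cofibrations nor the cofibrant objects while merely adjoining weak equivalences.
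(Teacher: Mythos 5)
Your proposal is correct and takes essentially the same approach as the paper: the paper presents this theorem simply as a rephrasing of \cite[Theorem 13.26]{mvw} without further argument, and your proof fills in precisely the formal content of that rephrasing --- the equivalence between homotopy co-cartesian squares and distinguished triangles, and the transfer from the Nisnevich to the \'etale $\mathbb{A}^1$-local model structure via left Bousfield localisation.
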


\begin{proposition} \label{blowup-form}
Let $F$ be a smooth Deligne-Mumford stack and $Z \subset F$ be a smooth closed substack. Let $Bl_Z(F)$ be the blow-up of $F$ with center $Z$ and 
$E := Z \times_F Bl_Z(F)$ be the exceptional divisor. Then one has a canonical distinguished triangle of the form :
 $$ M(E) \to M(Z) \oplus M(Bl_Z(F)) \to M(F) \to M(E)[1].$$
\end{proposition}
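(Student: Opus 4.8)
The plan is to descend the scheme-level statement of Theorem~\ref{homotopy} through a compatible family of \'etale atlases and then to totalise. First I would fix an \'etale atlas $a\colon U\to F$ with $U$ smooth --- this exists by definition of a smooth Deligne--Mumford stack --- and set $Z_U:=U\times_F Z$, which is a smooth closed subscheme of $U$ because $a$ is \'etale and $Z\hookrightarrow F$ is a smooth closed substack. Since the formation of a blow-up and of its exceptional divisor commutes with flat, in particular \'etale, base change, there are canonical isomorphisms $Bl_{Z_U}(U)\cong U\times_F Bl_Z(F)$ and $Z_U\times_U Bl_{Z_U}(U)\cong U\times_F E$. Consequently $Bl_{Z_U}(U)\to Bl_Z(F)$, $Z_U\to Z$ and $E_U:=Z_U\times_U Bl_{Z_U}(U)\to E$ are \'etale atlases, each obtained from $a$ by base change, and passing to \u{C}ech nerves yields a commutative square of simplicial schemes with vertices $E_{U,\bullet}$, $Z_{U,\bullet}$, $Bl_{Z_U}(U)_\bullet$ and $U_\bullet$. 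Applying flat base change once more in each simplicial degree $n$, this square of schemes is precisely the blow-up square of the smooth scheme $U_n$ with centre the smooth closed subscheme $Z\times_F U_n$ and exceptional divisor $E\times_F U_n$, and these identifications are compatible with all face and degeneracy operators.

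Next I would apply Theorem~\ref{homotopy}, in its $\mathbb{Q}$-linear form (obtained by tensoring with $\mathbb{Q}$), in each simplicial degree $n$: the square of presheaves with transfers obtained by applying $\mathbb{Q}_{tr}(-)$ to the above degree-$n$ blow-up square is homotopy co-cartesian for the \'etale $\mathbb{A}^1$-local model structure on $K(PST(k,\mathbb{Q}))$. These squares being natural in $n$, I obtain a square of simplicial objects of $PST(k,\mathbb{Q})$ that is level-wise homotopy co-cartesian. Passing to normalised total complexes and using that $N\mathbb{Q}_{tr}(X_\bullet)$ computes the homotopy colimit over $\bigtriangleup^{op}$ of $n\mapsto\mathbb{Q}_{tr}(X_n)$, together with the fact that homotopy colimits commute with homotopy pushouts --- equivalently, that the diagonal of a level-wise homotopy co-cartesian square of bisimplicial objects is homotopy co-cartesian, by the Bousfield--Kan argument already invoked in the proof of Lemma~\ref{etale cover} --- the resulting square with vertices $N\mathbb{Q}_{tr}(E_{U,\bullet})$, $N\mathbb{Q}_{tr}(Z_{U,\bullet})$, $N\mathbb{Q}_{tr}(Bl_{Z_U}(U)_\bullet)$ and $N\mathbb{Q}_{tr}(U_\bullet)$ is homotopy co-cartesian in $K(PST(k,\mathbb{Q}))$, hence co-cartesian in $\mathbf{DM}^{\eff}(k,\mathbb{Q})$. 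By Corollary~\ref{atlas motive} its vertices are canonically $M(E)$, $M(Z)$, $M(Bl_Z(F))$ and $M(F)$, and a homotopy co-cartesian square in a triangulated category produces a distinguished triangle, which here reads
$$M(E)\to M(Z)\oplus M(Bl_Z(F))\to M(F)\to M(E)[1].$$

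I expect the main obstacle to be organisational rather than conceptual: the geometric content is entirely supplied by Theorem~\ref{homotopy}, so the real work lies in (i) checking that the four \u{C}ech nerves genuinely assemble into a single square of simplicial schemes which is degreewise a scheme-theoretic blow-up square, compatibly with every simplicial operator --- this uses nothing beyond flat base change for blow-ups and their exceptional divisors, but must be set up carefully --- and (ii) making rigorous the passage from ``level-wise homotopy co-cartesian'' to ``homotopy co-cartesian after totalisation'', for which one invokes either the Bousfield--Kan result used for Lemma~\ref{etale cover} or left-properness of the motivic model structure on $K(PST(k,\mathbb{Q}))$ together with a cofibre-sequence argument.
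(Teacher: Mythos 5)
Your argument is exactly the paper's: base change the blow-up square along the \u{C}ech nerve of an atlas, apply Theorem \ref{homotopy} degreewise to get a levelwise homotopy co-cartesian square of simplicial presheaves with transfers, and conclude by commuting homotopy colimits with homotopy pushouts. Your version merely spells out the flat-base-change identifications and the passage to the totalization in more detail than the paper does, but there is no difference of substance.
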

\begin{proof}
 Let $a : U \to F$ be an atlas and let $U_{\bullet}$ be the associated \u{C}ech complex. Then the following square of simplicial presheaves with transfers

\[
\xymatrix{
         \mathbb{Q}_{tr} (U_{\bullet} \times_F E) \ar[r] \ar[d] &\mathbb{Q}_{tr} (U_\bullet \times_F Bl_Z(F)) \ar[d] \\
         \mathbb{Q}_{tr} (U_{\bullet} \times_F Z) \ar[r]  &\mathbb{Q}_{tr} (U_{\bullet}) 
                                    }
\]
is homotopy co-cartesian in each degree by theorem \ref{homotopy}. Since homotopy colimits commutes with homotopy push-outs, the following square 
\[
\xymatrix{
         M(E) \ar[r] \ar[d] &M(Bl_Z(F)) \ar[d] \\
         M(Z) \ar[r]  &M(F) 
                                    }
\]
is homotopy co-cartesian and hence we get our result.
\end{proof}

\begin{theorem} 
\label{blowupform}
Let $F$ be a smooth Deligne-Mumford stack and $Z \subset F$ be a smooth closed substack of pure codimension $c$. Let $Bl_Z(F)$ be the blow-up of $F$ with center $Z$. Then
$$M(Bl_Z(F)) \cong M(F) \;\bigoplus\; (\oplus_{ i=1}^{c-1} M(Z)(i)[2i]).$$ 
\end{theorem}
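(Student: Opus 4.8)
The plan is to combine the blow-up distinguished triangle from Proposition \ref{blowup-form} with the projective bundle formula (Theorem \ref{projective bundle}) applied to the exceptional divisor, exactly as in the classical scheme case (cf. \cite[Theorem 13.27]{mvw}). First I would observe that since $Z \subset F$ is smooth of pure codimension $c$, the normal bundle $\mathcal{N}_{Z/F}$ is a vector bundle of rank $c$ on the smooth Deligne-Mumford stack $Z$, and the exceptional divisor $E = Z \times_F Bl_Z(F)$ is the projectivization $Proj(\mathcal{N}_{Z/F})$ of this bundle, with structure map $p : E \to Z$. This is checked on an \'etale atlas $U \to F$: blow-ups, normal bundles, and projectivizations all commute with the smooth (indeed \'etale) base change $U \to F$, so $E \times_F U$ is the classical exceptional divisor of $Bl_{Z\times_F U}(U)$, which is the classical $\mathbb{P}(\mathcal{N})$; by Corollary \ref{atlas motive} this identification descends to motives.

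Next I would apply Theorem \ref{projective bundle} to get
$$M(E) = M(Proj(\mathcal{N}_{Z/F})) \cong \bigoplus_{i=0}^{c-1} M(Z)(i)[2i],$$
and note that the composite $M(Z) \to M(E) \to M(Z)$ (the structure map followed by... ) — more precisely, the map $M(E) \to M(Z)$ induced by $p$ splits off the $i=0$ summand, so that the projection $M(E) \to M(Z)$ appearing in the triangle of Proposition \ref{blowup-form} is, under this decomposition, the projection onto the first factor. Hence the map $M(E) \to M(Z) \oplus M(Bl_Z(F))$ is, on the first factor, a split surjection onto $M(Z)$.

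With this in hand, I would argue that the distinguished triangle
$$M(E) \to M(Z) \oplus M(Bl_Z(F)) \to M(F) \to M(E)[1]$$
degenerates: because $M(E) \to M(Z)$ is a split surjection with complementary kernel $\bigoplus_{i=1}^{c-1} M(Z)(i)[2i]$, a standard argument (rotate the triangle, use the retraction to cancel the common $M(Z)$ summand) yields a distinguished triangle
$$\textstyle\bigoplus_{i=1}^{c-1} M(Z)(i)[2i] \to M(Bl_Z(F)) \to M(F) \to \big(\bigoplus_{i=1}^{c-1} M(Z)(i)[2i]\big)[1],$$
and this triangle is split since the composite $M(F) \to \big(\bigoplus_{i=1}^{c-1} M(Z)(i)[2i]\big)[1]$ vanishes. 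The vanishing can be seen by noting that the morphism $M(Bl_Z(F)) \to M(F)$ admits a section in the relevant sense after one observes that the connecting map factors through the boundary of the Gysin/blow-up triangle and that $Hom(M(F), M(Z)(i)[2i+1]) $ receives the relevant class as zero — alternatively, and more robustly, one invokes that the map $M(E) \to M(Z) \oplus M(Bl_Z F)$ is a (split) monomorphism onto a direct summand because its first component already is, which forces the triangle to split. Therefore $M(Bl_Z(F)) \cong M(F) \oplus \bigoplus_{i=1}^{c-1} M(Z)(i)[2i]$.

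The main obstacle I anticipate is the identification $E \cong Proj(\mathcal{N}_{Z/F})$ at the level of Deligne-Mumford stacks together with the compatibility of the map $M(E) \to M(Z)$ in Proposition \ref{blowup-form} with the projective bundle decomposition of Theorem \ref{projective bundle}: one must check that the structure map $E \to Z$ of the projective bundle coincides with the restriction $E = Z \times_F Bl_Z(F) \to Z$ used in forming the blow-up triangle, and that the splitting $M(Z) \hookrightarrow M(E)$ (coming from the zero section / the $i=0$ summand, built from powers of $\mathcal{O}(1)$ in the proof of Theorem \ref{projective bundle}) is the one compatible with this map. This is routine on an \'etale atlas, where it reduces to the known scheme-theoretic statement, but it requires carefully tracking the identifications through Corollary \ref{atlas motive} and Lemma \ref{etale cover}. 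Once this compatibility is in place, the homological algebra of splitting the triangle is formal.
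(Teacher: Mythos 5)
Your overall strategy---combining the blow-up triangle of Proposition~\ref{blowup-form} with the projective bundle formula (Theorem~\ref{projective bundle}) applied to $E\cong Proj(N_Z(F))$, and reducing to the vanishing of a connecting morphism---is the same as the paper's. The gap is that the vanishing is not actually established. Your first suggestion (that the connecting map ``factors through the boundary of the Gysin/blow-up triangle'' and that $Hom(M(F),M(Z)(i)[2i+1])$ ``receives the relevant class as zero'') is not a proof: those $Hom$ groups do not vanish in general and you give no reason why the class should be zero. Your second suggestion is incorrect as a matter of homological algebra: the map $M(E)\to M(Z)\oplus M(Bl_Z F)$ being a split monomorphism does \emph{not} follow from its first component $p_*\colon M(E)\to M(Z)$ being split surjective. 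After the cancellation step you describe, you obtain a triangle $\bigoplus_{i=1}^{c-1}M(Z)(i)[2i]\to M(Bl_Z F)\to M(F)\to\cdots$, and asking that $M(E)\to M(Z)\oplus M(Bl_Z F)$ be split onto a direct summand is exactly asking that this triangle split, which is what you set out to prove; the claim is circular.

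The missing geometric input is supplied in the paper by a deformation-to-the-normal-cone argument following the proof of \cite[Proposition 3.5.3]{voe}. One compares the blow-up triangle for $Z\subset F$ with the one for $Z\times\{0\}\subset F\times\mathbb{A}^1$, giving a morphism of exact triangles. The map $s_0\colon M(F)\to M(F\times\mathbb{A}^1)$ is an isomorphism, and the induced map $a$ on the exceptional divisors is split injective (again by the projective bundle formula together with $\mathbb{A}^1$-invariance). The map $f\colon M(Z\times\{0\})\oplus M(Bl_{Z\times\{0\}}(F\times\mathbb{A}^1))\to M(F\times\mathbb{A}^1)$ has a section because the composite $M(F\times\{1\})\to M(Bl_{Z\times\{0\}}(F\times\mathbb{A}^1))\to M(F\times\mathbb{A}^1)$ is an isomorphism (the fibre over $1$ avoids the centre of the blow-up); hence the connecting morphism $h$ for the second triangle vanishes, and since $s_0$ is an isomorphism and $a$ is split injective, the original connecting morphism $g$ vanishes too. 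Without this, or an equivalent geometric argument, the splitting does not follow from the triangle alone.
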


\begin{proof}
By proposition \ref{blowup-form} we have a canonical distinguished triangle
$$ M(p^{-1}(Z)) \to M(Z) \oplus M(Bl_Z(F)) \to M(F) \to M(p^{-1}(Z))[1],$$
where $p : Bl_Z(F) \to F$ is the blow-up. 
Since $Z$ is smooth, $p^{-1}(Z) \cong Proj(N_Z(F))$, where $N_Z(F)$ is the normal bundle. Hence using theorem \ref{projective bundle}, it is enough to show that the morphism $M(F) \to M(p^{-1}(Z))[1]$ is zero in $\mathbf{DM}^{\eff}(k,\mathbb{Q})$.
Let $q : Bl_{Z \times \left\{0\right\}} (F \times \mathbb{A}^1) \to F \times \mathbb{A}^1$ be the 
blow-up of $Z \times \left\{0\right\}$ in $F \times \mathbb{A}^1$.

Following the  proof of \cite[Proposition 3.5.3]{voe}, 
consider the morphism of exact triangles:
$$\xymatrix{
M(p^{-1}(Z)) \ar[r] \ar[d] & M(q^{-1}(Z \times \left\{0\right\}) \ar[d] \\
M(Z) \oplus M(Bl_Z F) \ar[r] \ar[d]   & M(Z \times \left\{0\right\}) \oplus M(Bl_{Z \times \left\{0\right\}} F \times \mathbb{A}^1) \ar[d]^f \\
M(F) \ar[r]^{s_0} \ar[d]^g &M(F \times \mathbb{A}^1) \ar[d]^h \\
M(p^{-1} (Z))[1] \ar[r]^{a} & M(q^{-1}(Z \times \left\{0\right\}))[1]
}$$

Since the morphism $s_0$ is an isomorphism and since by theorem \ref{projective bundle} $a$ is split injective, 
the morphism $g$ is zero if $h$ is zero.
To show that $h$ is zero it is enough to show that $f$ has a section.  
This is the case as the composition $$M(F \times \left\{1\right\}) \to M(Bl_{Z \times \left\{0\right\}} F \times \mathbb{A}^1) \to M(F \times \mathbb{A}^1)$$
is an isomorphism.
\end{proof}

\subsection{Gysin triangle}

Given a morphism $F \to F'$ of Deligne-Mumford stacks, let $$M\left(\frac{F'}{F} \right):= cone(M(F) \to M(F')).$$ 
Similarly given a morphism
$V_{\bullet} \to U_{\bullet}$ of simplicial schemes, 
let $$\mathbb{Q}_{tr}\left(\frac{U_{\bullet}}{V_{\bullet}}\right) := cone(\mathbb{Q}_{tr}(V_{\bullet}) \to \mathbb{Q}_{tr}(U_{\bullet})).$$

\begin{lemma} 
\label{excision}
Let $f : F' \to F$ be an \'etale morphism of smooth Deligne-Mumford stacks, 
and let $Z \subset F$ be a closed substack such that $f$ induces an isomorphism $f^{-1} (Z)\cong Z$. 
Then the canonical morphism
$$M\left(\frac{F'}{F' - Z}\right) \to M\left(\frac{F}{F - Z}\right)$$ 
is an isomorphism.
\end{lemma}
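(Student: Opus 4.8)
The plan is to reduce the statement about stacks to the analogous étale-excision statement for simplicial schemes, which is classical. First I would choose an atlas $a : U \to F$ and form the associated \v{C}ech simplicial scheme $U_\bullet$, so that $M(F) \cong M(U_\bullet)$ by Corollary \ref{atlas motive}. Pulling back along $f$ gives an atlas $U' := U \times_F F' \to F'$ (étale and representable since $f$ is), with \v{C}ech simplicial scheme $U'_\bullet = U_\bullet \times_F F'$, and again $M(F') \cong M(U'_\bullet)$. The open immersion $F - Z \hookrightarrow F$ pulls back to open immersions $U_\bullet - Z_\bullet \hookrightarrow U_\bullet$ (where $Z_\bullet := U_\bullet \times_F Z$) and similarly on the primed side, and one checks $M(F/(F-Z)) \cong \mathbb{Q}_{tr}(U_\bullet / (U_\bullet - Z_\bullet))$, compatibly with the map induced by $f$.

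Next I would observe that because $f$ restricts to an isomorphism $f^{-1}(Z) \cong Z$ and both maps $U' \to F'$, $U \to F$ are étale atlases, for each simplicial degree $n$ the square
\[
\xymatrix{
U'_n - Z'_n \ar[r] \ar[d] & U'_n \ar[d] \\
U_n - Z_n \ar[r] & U_n
}
\]
is, after passing to the associated étale sheaves with transfers, an excision square in the sense relevant to $\mathbf{DM}^{\eff}(k,\mathbb{Q})$: the étale-local (indeed Nisnevich-local, since we have transfers) weak equivalence $\mathbb{Q}_{tr}(U'_n/(U'_n - Z'_n)) \to \mathbb{Q}_{tr}(U_n/(U_n - Z_n))$ is precisely étale excision for the étale morphism $U'_n \to U_n$ which is an isomorphism over the closed subscheme $Z_n \cong Z'_n$. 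This is the content of \cite[Theorem 13.26]{mvw} / the étale descent built into the model structure; concretely, $\mathbb{Q}_{tr}(U_n) / \mathbb{Q}_{tr}(U_n - Z_n)$ only depends on an étale (or even Nisnevich) neighbourhood of $Z_n$ in $U_n$, and $U'_n$ provides such a neighbourhood with the same closed fibre.

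Finally I would pass from the degreewise statement to the statement about the simplicial objects. Since $\mathbb{Q}_{tr}(U_\bullet/(U_\bullet - Z_\bullet))$ is computed as the total complex (equivalently, homotopy colimit over $\bigtriangleup^{op}$) of the simplicial object $n \mapsto \mathbb{Q}_{tr}(U_n/(U_n - Z_n))$, and likewise for the primed version, and since homotopy colimits preserve (degreewise) weak equivalences, the map $\mathbb{Q}_{tr}(U'_\bullet/(U'_\bullet - Z'_\bullet)) \to \mathbb{Q}_{tr}(U_\bullet/(U_\bullet - Z_\bullet))$ is a motivic weak equivalence. Translating back through the identifications of the first paragraph gives the isomorphism $M(F'/(F'-Z)) \to M(F/(F-Z))$ in $\mathbf{DM}^{\eff}(k,\mathbb{Q})$.

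The main obstacle I anticipate is bookkeeping rather than depth: one must make sure the open substacks $F' - Z$ and $F - Z$ have \v{C}ech simplicial schemes that are genuinely the open complements $U'_\bullet - Z'_\bullet$ and $U_\bullet - Z_\bullet$ (this uses that $a$ is representable and étale, so fibre products behave well), and that the cone construction $M(-/-)$ is compatible with all these identifications and with passing to the homotopy colimit. The only mildly subtle point is the degreewise excision claim, which one should phrase carefully as an étale- (or Nisnevich-) local isomorphism of quotient sheaves with transfers, so that it becomes an isomorphism in $\mathbf{DM}^{\eff}(k,\mathbb{Q})$; everything else is formal manipulation of \v{C}ech resolutions and total complexes.
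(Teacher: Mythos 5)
Your proof follows the same basic strategy as the paper's: reduce to degreewise \'etale excision for schemes along a map of \v{C}ech simplicial objects. However, there is a subtle gap in how you set up the atlases, and the paper's choice of atlas is precisely designed to avoid it. You pull back the atlas $U \to F$ along $f$ to obtain $U' := U \times_F F'$, and then treat $U'_\bullet$ as a \v{C}ech simplicial \emph{scheme}. But the statement of the lemma does not assume $f : F' \to F$ is representable, so $U \times_F F'$ need not be a scheme (the hypothesis that $f$ is an isomorphism over $Z$ only controls $f$ near $Z$). Without a scheme-theoretic atlas, $\mathbb{Q}_{tr}(U'_n)$ is not defined, and the degreewise excision step has nothing to apply to. By contrast, the paper chooses atlases $v':V'\to F'$ and $v:V\to F-Z$ (both schemes by definition of an atlas), sets $U=V\coprod V'$, and observes that $U\to F$ is then an atlas of $F$; the resulting $V'_\bullet\to U_\bullet$ is a genuine map of simplicial schemes, \'etale in each degree and an isomorphism over $Z$, so \cite[Proposition 5.18]{voe1} applies directly. (Your citation of \cite[Theorem 13.26]{mvw} is also off: that is the blow-up theorem, not \'etale/Nisnevich excision.) Aside from this, the structure of the argument — degreewise distinguished square, isomorphism of quotient sheaves with transfers, pass to the total complex — is the same as the paper's and is sound.
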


\begin{proof}
Let $v' : V' \to F'$ be an atlas of $F'$, and let $v : V \to F - Z$ be an atlas of the complement of $Z$. Then 
$U=V \coprod V' \to  F$ is an atlas
of $F$. Let $f_{\bullet} : V'_{\bullet} \to U_{\bullet}$ be the induced morphism between the associated \u{C}ech simplicial schemes. 
In each simplicial degree $i$, 
we have an \'etale morphism $f_i : V'_i \to U_i$ such that $f_i$ induces an isomorphism 
$Z \times_F V'_i \cong Z \times_F U_i$. Let $Z_{\bullet} := Z \times_F U_{\bullet}\cong Z \times_F V'_{\bullet}$.
It is enough to show that the canonical morphism 
$M\left(\frac{V_{\bullet}}{V_{\bullet}- Z_{\bullet}}\right) \to M\left(\frac{U_{\bullet}}{U_{\bullet}- Z_{\bullet}}\right)$ is an isomorphism. This is indeed the case as $\mathbb{Q}_{tr} \left(\frac{V_{\bullet}}{V_{\bullet}- Z_{\bullet}}\right) \cong \mathbb{Q}_{tr} \left(\frac{U_{\bullet}}{U_{\bullet}- Z_{\bullet}}\right)$ by \cite[Proposition 5.18]{voe1}.
\end{proof}

%\begin{remark}
%\label{Mayer Vietoris}
%Let $F$ be a smooth Deligne-Mumford stack 
%and let $F = F_1 \cup F_2$, where $F_i$'s are open substacks of $F$. 
%We have an exact triangle:
%$$ M(F_1 \times_F F_2) \to M(F_1) \oplus M(F_2) \to M(F) \to M(F_1 \times_F F_2)[1].$$
%This follows from lemma \ref{excision} applied to 
%$F'=F_1$ and $Z =F\setminus F_2$.
%\end{remark}

\begin{lemma}
\label{vector bundle}
Let $p:V \to F$ be a vector bundle of rank $d$ over a smooth 
Deligne-Mumford stack $F$. Let $s:F \to V$ be the zero 
section of $p$. Then 
$$M\left(\frac{V}{V \setminus s} \right) \cong M(F)(d)[2d].$$  
\end{lemma}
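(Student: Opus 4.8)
The plan is to reduce the statement about Deligne--Mumford stacks to the corresponding statement for schemes (or simplicial schemes), using an atlas, exactly in the style of the preceding lemmas. First I would pick an atlas $a:U\to F$ with associated \v{C}ech simplicial scheme $U_\bullet$, and set $V_\bullet := U_\bullet\times_F V$. Since $p$ is a vector bundle, each $V_i := U_i\times_F V$ is a rank-$d$ vector bundle over the smooth scheme $U_i$, compatibly with the face and degeneracy maps, and $s$ pulls back to the zero section $s_i:U_i\to V_i$. By Corollary \ref{atlas motive} we have $M(V)\cong M(V_\bullet)$ and $M(F)\cong M(U_\bullet)$, and these identifications are compatible with the map $p$; similarly, an atlas for the open substack $V\setminus s$ is obtained from $V_\bullet\setminus s_\bullet$, so that $M(V/(V\setminus s))\cong \mathbb{Q}_{tr}(V_\bullet/(V_\bullet\setminus s_\bullet))$.

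Next I would invoke the scheme-level Gysin/Thom isomorphism. For each $i$, $V_i\to U_i$ is a vector bundle of rank $d$ over a smooth scheme, so by Voevodsky's computation (e.g.\ \cite[Proposition 5.18 and its consequences]{voe1}, or the deformation-to-the-normal-cone argument) there is a canonical isomorphism $\mathbb{Q}_{tr}(V_i/(V_i\setminus s_i))\cong \mathbb{Q}_{tr}(U_i)(d)[2d]$ in $\mathbf{DM}^{\eff}(k,\mathbb{Q})$. The point is that this isomorphism is natural in the scheme $U_i$ and in the vector bundle, hence compatible with all the simplicial structure maps of $U_\bullet$ and $V_\bullet$. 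Therefore it assembles into an isomorphism of simplicial objects in $\mathbf{DM}^{\eff}(k,\mathbb{Q})$, and taking the (homotopy) colimit over $\bigtriangleup^{op}$ — which commutes with the Tate twist and shift and with the cone construction, just as in the proof of Proposition \ref{blowup-form} — yields
$$M\left(\frac{V_\bullet}{V_\bullet\setminus s_\bullet}\right)\cong M(U_\bullet)(d)[2d]\cong M(F)(d)[2d].$$
Combining with the atlas identifications above gives the claim.

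The main obstacle is the naturality/coherence issue: one must be sure the chosen Thom isomorphism for each $V_i\to U_i$ is genuinely functorial — not just an abstract isomorphism — so that the diagrams indexed by the morphisms of $\bigtriangleup$ commute strictly (or at least up to coherent homotopy sufficient to glue in the homotopy category of simplicial objects). The cleanest way to handle this is to realize the Thom class at the level of presheaves with transfers: pull back along $V_\bullet\to V$ a fixed representative of the relevant cycle class as in the proof of Theorem \ref{projective bundle} and Corollary \ref{picard}, so that everything is literally a morphism of simplicial presheaves with transfers and the compatibility is tautological; then one appeals once to the scheme-level result \cite[Proposition 5.18]{voe1} to see that the resulting map is a motivic weak equivalence in each degree, and concludes by the fact that a degreewise weak equivalence of simplicial presheaves induces an isomorphism after applying $M$.
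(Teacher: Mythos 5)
Your approach is genuinely different from the paper's, and it is worth comparing the two. You propose to apply the scheme-level Thom isomorphism degree by degree along the \v{C}ech nerve $U_\bullet$ and then assemble the result by a homotopy colimit; the paper instead works entirely at the level of stacks, observing (via Lemma~\ref{excision}) that $M\bigl(V/(V\setminus s)\bigr)\cong M\bigl(\mathrm{Proj}(V\oplus\mathcal{O})/(\mathrm{Proj}(V\oplus\mathcal{O})\setminus s)\bigr)$, then using the $\mathbb{A}^1$-contraction $\mathrm{Proj}(V)\hookrightarrow \mathrm{Proj}(V\oplus\mathcal{O})\setminus s$ to replace the denominator by $\mathrm{Proj}(V)$, and finally invoking the already-established projective bundle formula for Deligne--Mumford stacks (Theorem~\ref{projective bundle}). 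This is Voevodsky's original argument for schemes lifted verbatim to the stack level, and it never has to confront the question of compatibility across simplicial degrees: the projective bundle formula was already built as a single genuine morphism of simplicial presheaves with transfers (the $\sigma$ in the proof of Theorem~\ref{projective bundle}), so all the coherence is encoded once and for all there.

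You correctly flag the coherence issue as the main obstacle, but your proposed resolution is not yet a proof. To ``realize the Thom class at the level of presheaves with transfers'' by pulling back ``a fixed representative of the relevant cycle class'' you first need a class in $H^{2d}_M\bigl(V/(V\setminus s),d\bigr)$ on the stack, and independently of the Thom isomorphism you are trying to prove, the only available ways to produce such a class lead back to exactly the construction the paper carries out. Concretely, the canonical source of the Thom class is the first Chern class of $\mathcal{O}(1)$ on the projective completion, organized via the projective bundle formula; once you invoke that, you are no longer doing a degreewise argument but rather reproving the paper's reduction. If instead you wanted to produce the class as the cycle class of the zero section $s(F)\subset V$ in $A^d(V)$ (via Lemma~\ref{motivic cohomology}), you would still owe a verification that this class induces an isomorphism, and the standard proof of that is again deformation to the normal cone or the projective completion trick. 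In short, the degreewise-naturality plan can be made to work, but the detail you wave at (``pull back a fixed representative'') is precisely where the mathematical content sits, and the paper's route through $\mathrm{Proj}(V\oplus\mathcal{O})$ is the cleanest way to discharge it.
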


\begin{proof}
Using lemma \ref{excision}, we have an isomorphism 
$$M\left(\frac{V}{V \setminus s}\right) \cong M\left(\frac{Proj(V \oplus O)}{Proj(V \oplus O) \setminus s}\right).$$ 
The image of the embedding $Proj(V) \to Proj(V \oplus O)$ is disjoint from $s$ and 
$\iota : Proj(V) \to Proj(V \oplus O)\setminus s$ 
is the zero section of a line bundle. 
Thus, the induced morphism $\iota:
M(Proj(V)) \to M(Proj(V \oplus O)\setminus s)$
is an $\mathbb{A}^1$-weak equivalence. (This can be checked 
using an explicit $\mathbb{A}^1$-homotopy as in the classical case 
where the base is a scheme.)
It follows that 
$$M\left(\frac{V}{V \setminus s}\right) \cong
M\left(\frac{Proj(V \oplus O)}{Proj(V)}\right).$$
Now using theorem \ref{projective bundle}, we get
$$M\left(\frac{Proj(V \oplus O)}{Proj(V)}\right) \cong M(F)(d)[2d].$$
This proves the lemma.
\end{proof}

\begin{theorem} 
\label{gysin triangle}
Let $Z \subset F$ be a smooth closed codimension $c$ substack of a smooth Deligne-Mumford stack $F$. 
Then there exists a Gysin exact triangle:
$$ M(F \setminus Z) \to M(F) \to M(Z)(c)[2c] \to M(F \setminus Z)[1].$$
\end{theorem}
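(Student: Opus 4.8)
The plan is to mimic the classical construction of the Gysin triangle for schemes (as in Voevodsky's work and \cite[Chapter 15]{mvw}), using the deformation to the normal cone, but carried out at the level of \u{C}ech simplicial schemes associated to atlases so that all the scheme-level tools apply. Concretely, let $a : U \to F$ be an atlas of $F$ and $U_{\bullet}$ the associated \u{C}ech simplicial scheme; pulling back $Z$ we get a smooth closed simplicial subscheme $Z_{\bullet} := Z \times_F U_{\bullet} \subset U_{\bullet}$ of pure codimension $c$ in each degree. The quotient $M(F)/M(F\setminus Z)$ is then computed by the cone of $\mathbb{Q}_{tr}((U_{\bullet})\setminus Z_{\bullet}) \to \mathbb{Q}_{tr}(U_{\bullet})$, i.e. by $\mathbb{Q}_{tr}\!\left(\frac{U_{\bullet}}{U_{\bullet}\setminus Z_{\bullet}}\right)$, by Corollary \ref{atlas motive} and the definition of the cone.

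First I would set up the deformation to the normal cone $\mathrm{Def} := Bl_{Z\times\{0\}}(F\times\mathbb{A}^1) \setminus Bl_{Z\times\{0\}}(F\times\{0\})$, which is a smooth Deligne-Mumford stack containing $Z\times\mathbb{A}^1$ as a smooth closed substack of codimension $c$, with general fiber $(F, Z)$ and special fiber $(N_Z(F), Z)$ (the normal bundle with its zero section). This makes sense for Deligne-Mumford stacks because blow-ups along smooth centers and the relevant closed substacks can all be constructed étale-locally, i.e. pulled back along the atlas; alternatively one may simply perform the entire deformation construction on $U_\bullet$ degreewise and check the faces are compatible. Next, using homotopy invariance (the $\mathbb{A}^1$-maps $F\hookrightarrow F\times\mathbb{A}^1$ at $0$ and $1$) one shows that the two closed-pair inclusions $(F,Z)\hookrightarrow (\mathrm{Def}, Z\times\mathbb{A}^1)$ and $(N_Z(F),Z)\hookrightarrow(\mathrm{Def}, Z\times\mathbb{A}^1)$ induce isomorphisms on the relative motives $M\!\left(\frac{-}{-\setminus-}\right)$; this is exactly the argument from \cite[Proposition 3.5.3]{voe} / \cite[Chapter 15]{mvw}, now applied degreewise on the \u{C}ech simplicial schemes where it is already available, and then passing to homotopy colimits. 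Combining these identifications with Lemma \ref{vector bundle} applied to $p : N_Z(F)\to Z$ and its zero section gives
$$ M\!\left(\frac{F}{F\setminus Z}\right) \;\cong\; M\!\left(\frac{N_Z(F)}{N_Z(F)\setminus s}\right) \;\cong\; M(Z)(c)[2c].$$
The Gysin triangle then drops out as the distinguished triangle $M(F\setminus Z)\to M(F)\to M(F)/M(F\setminus Z)\to M(F\setminus Z)[1]$ defining the cone, after substituting this identification of the cone.

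The main obstacle I anticipate is purely bookkeeping: verifying that the deformation-to-the-normal-cone construction, and the two $\mathbb{A}^1$-homotopies identifying the generic and special fibers, can be carried out coherently on the simplicial scheme $U_\bullet$ (i.e. are compatible with all face and degeneracy maps) — equivalently, that the constructions $Bl_{Z}(-)$, $N_Z(-)$, and the $\mathbb{A}^1$-deformation commute with the étale base change maps $U_i \to F$. This is where one needs that $Z\subset F$ is a \emph{smooth} closed substack and that blow-ups along smooth centers are compatible with flat (here, étale) base change; granting that, every step above reduces degreewise to the classical scheme case already recorded in \cite{mvw, voe}, and the homotopy-colimit compatibility is handled exactly as in the proofs of Proposition \ref{blowup-form} and Lemma \ref{vector bundle}.
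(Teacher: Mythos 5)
Your proposal follows essentially the same route as the paper's proof: take the defining distinguished triangle for the cone, use deformation to the normal cone $D_Z(F)$, check degreewise on the Čech simplicial scheme $U_\bullet$ that the two inclusions into $D_Z(F)$ induce isomorphisms on relative motives (the paper packages this in Lemma~\ref{codimension one} via \cite[Proposition 3.5.4]{voe}, while you invoke \cite[Proposition 3.5.3]{voe}, but the content is the same), and then apply Lemma~\ref{vector bundle} to identify $M\!\left(\frac{N_Z(F)}{N_Z(F)\setminus s_Z}\right)$ with $M(Z)(c)[2c]$. Your concluding remarks about compatibility of the constructions with étale base change to $U_i$ are exactly the bookkeeping the paper implicitly handles, so the argument is sound and matches the paper's.
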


\begin{proof}
We have the following obvious exact triangle
$$ M(F \setminus Z) \xrightarrow{i} M(F)  \to  M\left(\frac{F}{F \setminus Z}\right) \to M(F \setminus Z)[1].$$ 
We need to show that 
$M\left(\frac{F}{F \setminus Z}\right) \cong M(Z)(c)[2c]$ 
in $\mathbf{DM}^{\eff}(k, \mathbb{Q})$. Let $D_Z(F)$ be the space of deformation to the normal cone and let $N_Z(F)$ be the normal bundle. Consider the following commutative diagram of stacks:
$$\xymatrix{ 
Z \times 1 \ar[r] \ar@{^{(}->}[d] &Z \times \mathbb{A}^1 \ar@{^{(}->}[d] &Z \times 0 \ar[l]\ar@{^{(}->}[d]^{s_Z} \\
F \times 1 \ar@{^{(}->}[r] \ar[d] &D_Z(F) \ar[d] &N_Z(F) \ar@{_{(}->}[l]\ar[d] \\
1 \ar[r] &\mathbb{A}^1  &0. \ar[l] 
}$$
This gives morphisms 
$$s^1 : M\left(\frac{F}{F \setminus Z} \right) \to M\left(\frac{D_Z(F)}{D_Z(F) \setminus (Z \times \mathbb{A}^1)} \right)$$ and 
$$s^0 : M\left(\frac{N_Z(F)}{N_Z(F) \setminus s_Z} \right) \to M\left(\frac{D_Z(F)}{D_Z(F) \setminus (Z \times \mathbb{A}^1)} \right).$$ 
Let $U \to F$ be an atlas of $F$ and let $U_{\bullet}$ be the associated \u{C}ech simplicial scheme. Then $s^1$ can be described as  
$$s^1 :  \mathbb{Q}_{tr}\left(\frac{U_{\bullet}}{(F \setminus Z) \times_F U_{\bullet}} \right) \to \mathbb{Q}_{tr}\left(\frac{D_Z(F) \times_F U_{\bullet}}{(D_Z(F) \setminus (Z \times \mathbb{A}^1))\times_F U_{\bullet}} \right).$$
Let $Z_i := Z \times_F U_i$.
In each simplicial degree $i$ the morphism 
$(s^1)_i : \mathbb{Q}_{tr}\left(\frac{U_i}{U_i \setminus (Z_i)} \right) \to \mathbb{Q}_{tr}\left(\frac{D_{Z_i}(U_i)}{D_{Z_i}(U_i) \setminus (Z_i \times \mathbb{A}^1)}\right)$ induced
by $s^1$ is an $\mathbb{A}^1$-weak equivalence by lemma \ref{codimension one}. Hence $s^1$ is an $\mathbb{A}^1$-weak equivalence. Similarly,
$s^0$ is an $\mathbb{A}^1$-weak equivalence. Hence we get an isomorphism $M\left(\frac{F}{F \setminus Z} \right) \cong M\left(\frac{N_Z(F)}{N_Z(F) \setminus s_Z} \right)$.
But $M\left(\frac{N_Z(F)}{N_Z(F) \setminus s_Z} \right) \cong M(Z)(c)[2c]$ by lemma \ref{vector bundle}. 
\end{proof}

\begin{lemma} 
\label{codimension one}
Suppose we have a cartesian diagram of smooth schemes
$$\xymatrix{ 
 Z \ar[d]^0\ar[r] & Y  \ar[d]^-u \\
\mathbb{A}^{1} \times Z \ar[r]^-v & X  
}$$
where $u$ and $v$ are closed embeddings and 
$Y$ has codimension $1$ in $X$.
Then the canonical
morphism
$M\left(\frac{Y}{Y\setminus Z}\right) \to M\left(\frac{X}{X \setminus (\mathbb{A}^1 \times Z)}\right)$ is an isomorphism.
\end{lemma}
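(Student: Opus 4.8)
The plan is to reduce the statement to a purely Zariski-local computation and then to the projective-bundle / Gysin package already available. First I would work Zariski-locally on $X$: since the assertion concerns the cone of a map of motives and cones are compatible with the homotopy-colimit description of $M(-)$ coming from a Zariski (or Nisnevich) \v{C}ech cover, and since by excision (Lemma \ref{excision}) the motive $M(X/(X\setminus(\mathbb{A}^1\times Z)))$ only depends on an \'etale neighbourhood of the closed subscheme $\mathbb{A}^1\times Z$, it suffices to treat the case where $X$ carries a good local coordinate adapted to the codimension-one closed subscheme $Y$. Concretely, since $Y\hookrightarrow X$ has codimension $1$ and $X$ is smooth, \'etale-locally around a point of $Z$ the pair $(X,Y)$ looks like $(\mathbb{A}^1\times Y, \{0\}\times Y)$, i.e. $Y$ is cut out by a single coordinate function $t$; and I may further arrange that the embedding $v:\mathbb{A}^1\times Z\hookrightarrow X$ is, in this chart, the product of $\mathrm{id}_{\mathbb{A}^1}$ with the closed embedding $Z\hookrightarrow Y$. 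After these reductions the morphism in question becomes the map of Thom spaces induced by $Z\hookrightarrow Y$ crossed with the inclusion $\{0\}\times Y\hookrightarrow \mathbb{A}^1\times Y$.

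Next I would identify both sides using Lemma \ref{vector bundle} (purity). The left-hand side $M(Y/(Y\setminus Z))$ is, since $Z\subset Y$ is a smooth closed subscheme of some codimension $e$, isomorphic to $M(Z)(e)[2e]$ by the deformation-to-the-normal-cone argument (this is exactly the scheme-theoretic special case of Theorem \ref{gysin triangle}, or one can invoke Lemma \ref{vector bundle} after the deformation step). For the right-hand side I observe that $\mathbb{A}^1\times Z\subset X$ has the same codimension $e$ in $X$ (because $Y$ has codimension $1$ in $X$ and $Z$ has codimension $e$ in $Y$, while $\mathbb{A}^1\times Z$ has codimension $1$ less in $\mathbb{A}^1\times Y$), and its normal bundle in $X$ is canonically $\mathrm{pr}_1^* N_Z(Y)\oplus \mathcal{O}$ — the extra trivial summand coming from the $\mathbb{A}^1$-direction transverse to $Y$. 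Hence by purity $M(X/(X\setminus(\mathbb{A}^1\times Z)))\cong M(\mathbb{A}^1\times Z)(e)[2e]$, and by $\mathbb{A}^1$-invariance $M(\mathbb{A}^1\times Z)\cong M(Z)$. Tracking the construction of these purity isomorphisms through the deformation space $D_Z(Y)\subset D_{\mathbb{A}^1\times Z}(X)$ shows that the canonical morphism of the lemma is compatible with them, so it is carried to an isomorphism $M(Z)(e)[2e]\xrightarrow{\sim} M(Z)(e)[2e]$.

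The key technical point — and the main obstacle — is verifying that the canonical map of the lemma really does correspond, under the two purity isomorphisms, to the identity (or at least to an isomorphism), rather than merely showing the source and target are abstractly isomorphic. This requires a compatible choice of deformation-to-the-normal-cone diagrams: one should form the deformation space $D_{\mathbb{A}^1\times Z}(X)$ and note that it contains $D_Z(Y)$ as the evident codimension-one piece, with the inclusion of deformation spaces refining the square in the statement; then the same chain of $\mathbb{A}^1$-weak equivalences $s^1, s^0$ used in the proof of Theorem \ref{gysin triangle} applies simultaneously on both levels and identifies the map with the one induced on Thom spaces by the zero-section $Z\hookrightarrow N_Z(Y)$ versus $\mathbb{A}^1\times Z\hookrightarrow N_{\mathbb{A}^1\times Z}(X)=\mathrm{pr}^*N_Z(Y)\oplus\mathcal{O}$, which splits off the $\mathbb{A}^1$-factor and is therefore an $\mathbb{A}^1$-equivalence exactly as in the final step of the proof of Lemma \ref{vector bundle}. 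I expect the bookkeeping for this compatibility, rather than any single geometric input, to be the delicate part; everything else is a formal consequence of excision, $\mathbb{A}^1$-invariance, and the projective bundle formula already established above.
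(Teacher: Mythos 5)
Your proposal follows essentially the same route as the paper: identify both $M\!\left(\tfrac{Y}{Y\setminus Z}\right)$ and $M\!\left(\tfrac{X}{X\setminus(\mathbb{A}^1\times Z)}\right)$ with $M(Z)(c)[2c]$ via the scheme-level Gysin/purity isomorphism and then use $\mathbb{A}^1$-invariance; the paper simply cites \cite[Proposition 3.5.4]{voe} for both sides and concludes. The extra step you flag — checking that the two purity isomorphisms are compatible with the canonical map via nested deformation-to-the-normal-cone spaces — is a legitimate concern that the paper leaves implicit (it is the naturality already built into Voevodsky's construction), so your more careful version is correct and fills in the same-approach proof.
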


\begin{proof}
Let $c$ be the codimension of $Z$ in $Y$; it is also the codimension of $\mathbb{A}^1\times Z$ in $X$. Using  \cite[Proposition 3.5.4]{voe} we have $M\left(\frac{Y}{Y\setminus Z}\right) \cong M(Z)(c)[2c]$ and 
$M\left(\frac{X}{X \setminus (\mathbb{A}^1 \times Z)}\right) \cong M(Z \times \mathbb{A}^1)(c)[2c]$. Since $M(Z) \cong M(Z \times \mathbb{A}^1)$, we get the lemma.
\end{proof}

\section{Motives of Deligne-Mumford stacks, II} 
\label{3}

The main goal of this section is to show 
that the motive of a smooth 
Deligne-Mumford stack $F$ is a direct factor of 
the motive of a smooth and quasi-projective variety. 
Moreover, if $F$ is proper, this variety can be chosen to 
be projective.

\subsection{Blowing-up Deligne-Mumford stacks and principalization}

Let $F$ be a smooth Deligne-Mumford stack and let $a : U \to F$ be an atlas of $F$. Let $Z$ be a closed substack of $F$. The blow-up of $F$ along $Z$ is a Deligne-Mumford stack
$Bl_Z F$ together with a representable projective morphism $\pi : Bl_Z F \to F$. The induced morphism $a': Bl_{Z \times_F U} U \to Bl_Z F$ is an atlas.  The existence of $Bl_Z F$ is a consequence of the fact that blow-ups commute with flat base change.

\begin{theorem} \label{resolution}
Let $F$ be a smooth Deligne-Mumford stack of finite type over a field of characteristic zero. Let $\mathcal{O}_F$ be the structure sheaf and let $\mathcal{I} \subset \mathcal{O}_F$ be a coherent ideal. Then there is a sequence of blow-ups in smooth centers 
$$\pi : F_r \xrightarrow{\pi_{r}} F_{r-1} \xrightarrow{\pi_{r-1}} \dots \xrightarrow{\pi_1} F   $$
such that $\pi^* \mathcal{I} \subset \mathcal{O}_{F_r}$ is locally principal. 

\end{theorem}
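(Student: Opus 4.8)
The plan is to reduce the statement for Deligne--Mumford stacks to the corresponding statement for schemes, which is the classical principalization theorem (a consequence of Hironaka's resolution of singularities in characteristic zero; see for instance the versions due to W\l odarczyk or Bierstone--Milman). First I would fix an atlas $a : U \to F$ with $U$ a smooth scheme, and recall from the discussion preceding the theorem that blow-ups of $F$ in a smooth center $Z$ are computed atlas-locally: the induced morphism $Bl_{Z \times_F U} U \to Bl_Z F$ is again an atlas, because blow-ups commute with flat (in particular \'etale) base change. Thus a sequence of blow-ups of $F$ is the same datum as an \'etale-locally compatible sequence of blow-ups of $U$, and $\pi^*\mathcal{I}$ is locally principal on $F_r$ if and only if its pullback to the atlas $U_r$ of $F_r$ is locally principal.

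The key point is to run the principalization algorithm in a way that descends along the \'etale equivalence relation $R \rightrightarrows U$. Here I would invoke the \emph{functoriality} (or \'etale-equivariance) of canonical resolution: the principalization of the ideal $\mathcal{I}|_U \subset \mathcal{O}_U$ produced by the canonical algorithm commutes with smooth morphisms, and in particular with the two \'etale projections $s, t : R \to U$. Concretely, applying the algorithm to $(U, \mathcal{I}|_U)$ yields a sequence of smooth centers $Z_j \subset U_j$; functoriality under the \'etale maps $s,t$ forces $s^{-1}(Z_j)$ and $t^{-1}(Z_j)$ to be the centers produced by applying the same algorithm to $(R, \mathcal{I}|_R)$, hence they agree as subschemes of the appropriate iterated fiber product, which is exactly the descent datum needed to define a smooth closed substack $\mathcal{Z}_j \subset F_j$. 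Blowing up $\mathcal{Z}_j$ gives $F_{j+1}$ with atlas $U_{j+1}$, and after $r$ steps $\pi^*\mathcal{I}$ pulls back to a locally principal ideal on $U_r$, hence is locally principal on $F_r$ by \'etale descent of the property ``locally principal''. One should also check representability and projectivity of each $\pi_j : F_{j+1} \to F_j$, which follows since $\mathcal{Z}_j$ is a closed substack and blow-ups along closed substacks are representable and projective.

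The main obstacle is making the descent step rigorous: one must cite (or record precisely) a version of embedded resolution / principalization in characteristic zero that is \emph{functorial with respect to smooth morphisms}, since the na\"ive statement of Hironaka's theorem is only an existence result and need not descend. The references of W\l odarczyk and of Bierstone--Milman do provide such a functorial algorithm, and it is this functoriality, applied to the smooth maps $s,t : R \to U$ (and more generally to the structure maps of the simplicial scheme $U_\bullet$), that guarantees the centers glue to substacks of $F_j$. A secondary, more bookkeeping-type issue is that the algorithm a priori produces centers on $U_j$ and one must verify that the compatibility $s^{-1}(Z_j) = t^{-1}(Z_j)$ (inside $R_j = R \times_U U_j$, formed suitably) really is the cocycle condition defining a substack; this is routine once functoriality is in hand, because both sides are obtained by applying the \emph{same} canonical construction to pullbacks of the same data along \'etale maps.
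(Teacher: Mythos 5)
Your proposal is correct and takes essentially the same approach as the paper: both pass to an atlas $U \to F$, invoke the functorial principalization theorem in characteristic zero (the paper cites Koll\'ar's Theorem~3.15; you cite W\l odarczyk and Bierstone--Milman, which give the same functoriality under smooth morphisms), and use that functoriality to descend the sequence of smooth centers along the \'etale groupoid $R \rightrightarrows U$ to substacks of $F$. You spell out the descent step in somewhat more detail than the paper's terse ``this sequence commutes with arbitrary smooth base change, hence descends,'' but the underlying argument is identical.
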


\begin{proof}
Let $a : U \to F$ be an atlas and denote $\mathcal{J} := a^*\mathcal{I}$. By Hironaka's resolution of singularities  \cite[Theorem 3.15]{kol}, we have a sequence of blow-ups  in smooth centers
$$\pi' : U_r \xrightarrow{\pi'_{r}} U_{r-1} \xrightarrow{\pi'_{r-1}} \dots \xrightarrow{\pi'_1} U   ,$$ 
such that  
$(\pi')^*\mathcal{J}$ is a locally principal coherent ideal on $U_r$. Moreover this sequence commutes with arbitary smooth base change. Hence the sequence $\pi'$ 
descends  to give the sequence of the statement.
\end{proof}

\begin{lemma} 
\label{coarse moduli space}
Let $F' \to F$ be a (quasi-)projective representable morphism of Deligne-Mumford stacks. Let $X$ and $X'$ be the coarse moduli spaces of $F$ and $F'$ respectively.  Then the induced morphism $X' \to X$ is (quasi-)projective. In particular, if $X$ is (quasi-)projective then so is $X'.$
\end{lemma}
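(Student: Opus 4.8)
\textbf{Proof proposal for Lemma \ref{coarse moduli space}.}

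The plan is to work étale-locally on the coarse moduli space $X$ and reduce to the case of a global quotient presentation, where the statement becomes a classical fact about GIT-style quotients. First I would invoke the local structure theorem for separated Deligne-Mumford stacks (the same one used in the proof that $M(F)\cong\mathbb{Q}_{tr}(X)$, namely \cite[Prop 1.17]{t1} together with \cite[Prop 2.8]{vi}): there is an étale covering $(U_i)_{i\in I}$ of $X$ such that, writing $F_i:=U_i\times_X F$, one has $U_i\cong X_i/H_i$ and $F_i\cong [X_i/H_i]$ for affine schemes $X_i$ with $H_i$ a finite group acting on them, and moreover $U_i$ is the coarse moduli space of $F_i$. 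Since the formation of coarse moduli spaces commutes with flat — in particular étale — base change on $X$ (Keel--Mori, or \cite{vi}), the coarse space $X'$ of $F'$ pulls back over $U_i$ to the coarse space $X'_i$ of $F'_i:=F'\times_F F_i=U_i\times_X F'$. So it suffices to show each $X'_i\to U_i$ is (quasi-)projective, since (quasi-)projectivity of a morphism is étale-local on the target (it can be checked after an étale, or even fppf, cover — cf.\ descent for ample line bundles / \cite[\S8.10]{ega}, or the standard stacky reference \cite{lm}), and projectivity then descends.

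Next I would treat the local case $F=[X/G]$ with $X$ affine (hence $X/G$ affine) and $G$ finite. A (quasi-)projective representable morphism $F'\to [X/G]$ is, by definition, the data of a $G$-equivariant (quasi-)projective morphism $\widetilde{F}'\to X$, where $\widetilde{F}':=X\times_{[X/G]}F'$ is a scheme (or algebraic space) carrying a $G$-action lifting the one on $X$, and $F'=[\widetilde{F}'/G]$. Equivariant (quasi-)projectivity means $\widetilde{F}'$ embeds $G$-equivariantly into $\mathbb{P}(\mathcal{E})$ (resp.\ an open subscheme thereof) for some $G$-equivariant coherent sheaf $\mathcal{E}$ on $X$, i.e.\ there is a $G$-linearized relatively ample line bundle $\mathcal{L}$ on $\widetilde{F}'$. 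Because $G$ is finite, the coarse moduli space of $F'$ is simply $\widetilde{F}'/G$, the ordinary (geometric) quotient, which exists as a scheme because $\widetilde{F}'$ is quasi-projective over the affine $X$ and $G$ is finite — so every $G$-orbit lies in an affine open. The induced morphism $\widetilde{F}'/G\to X/G$ is then (quasi-)projective: for projectivity one uses that $\mathcal{L}$, being $G$-linearized, descends to an ample line bundle on the quotient after replacing it by $\bigotimes_{g\in G}g^*\mathcal{L}$ (the norm), a section of which on the quotient pulls back to a $G$-invariant section computing ampleness; for the quasi-projective case one embeds into the projective closure and takes the complement of the closed image of $\widetilde{F}'\setminus(\text{locus where }\mathcal{L}\text{ extends})$, all $G$-equivariantly. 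The final clause follows by taking $X$ itself (quasi-)projective and composing $X'\to X\to \operatorname{Spec}k$.

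The main obstacle I anticipate is purely bookkeeping rather than conceptual: making sure the coarse-moduli-space construction is genuinely compatible with the étale base change $U_i\to X$ and with the passage to the global quotient chart, so that $X'_i$ really is both $U_i\times_X X'$ and the coarse space of $F'_i$ simultaneously — this is where one needs the full strength of the Keel--Mori theorem and its base-change properties (valid in characteristic $0$, which is our standing assumption), rather than anything about quotients in isolation. The descent step for (quasi-)projectivity of $X'\to X$ along the étale cover $\coprod U_i\to X$ is standard but should be cited carefully, as quasi-projectivity descends along fppf covers for \emph{quasi-compact, quasi-separated} morphisms, a hypothesis automatically satisfied here since all our stacks are of finite type. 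Once these compatibilities are pinned down, the equivariant-ampleness argument in the quotient chart is routine and the lemma follows.
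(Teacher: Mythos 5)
The paper's proof of this lemma is a bare citation to Kresch--Vistoli, \cite[Lemma~2, Theorem~1]{kv}, so you are effectively supplying the argument that the paper outsources. Your plan — pass to the quotient-chart presentation $F_i\cong[X_i/H_i]$, handle that case by an equivariant-ampleness computation, and then glue — is close in spirit to what is actually done in \cite{kv}, but there is a genuine gap in the gluing step that makes the reduction as written fail.

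The problematic assertion is that (quasi-)projectivity of a morphism of schemes (or algebraic spaces) is étale-local on the target. This is false: quasi-projectivity is Zariski-local on the target but not fppf- or even étale-local, because the relatively ample line bundles you produce on the pieces $X'_i\to U_i$ need not glue to a line bundle on $X'$ — the descent datum may be ineffective, or the descended bundle may fail to be ample. (Raynaud's examples of non-projective torsors under abelian schemes that trivialize, hence become projective, after an étale base change are the classical illustration; the Stacks Project records explicitly that ``projective'' and ``quasi-projective'' are not local for the fppf topology.) Your parenthetical appeal to ``descent for ample line bundles'' actually points at the right fix but for the wrong statement: what \emph{does} hold (EGA~IV or \cite[II~6.6]{ega}-type statements) is that if you already have a \emph{global} line bundle $\mathcal{L}$ on $X'$ whose restriction to each $X'_i$ is relatively ample over $U_i$, then $\mathcal{L}$ is relatively ample over $X$. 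Ampleness of a given bundle is local; existence of an ample bundle is not.

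The correct route — and essentially the one in \cite{kv} — therefore runs the other way around: start from a single line bundle $\mathcal{L}$ on $F'$ that is relatively ample over $F$ (the definition of a quasi-projective representable morphism of stacks), observe that the geometric stabilizer groups of $F'$ are finite of bounded order since $F'$ is separated and of finite type, and replace $\mathcal{L}$ by $\mathcal{L}^{\otimes N}$ for a suitable $N$ so that all stabilizers act trivially on the fibers. This $\mathcal{L}^{\otimes N}$ then descends to a line bundle on the coarse moduli space $X'$ globally, and only \emph{now} does one check on the quotient charts $[X_i/H_i]$ — exactly by the norm computation you sketch — that the descended bundle is relatively ample over $X$, which is legitimate because one is checking a property of an already-global object. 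Your local analysis of the quotient case is essentially right and reusable; the missing idea is to descend a line bundle rather than to descend quasi-projectivity. The final clause (composing with a (quasi-)projective $X\to\operatorname{Spec}k$) is fine.
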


\begin{proof}
\cite[lemma 2, Theorem 1]{kv}.
\end{proof}

The proof of the following theorem was communicated to us by David Rydh.
\begin{theorem} \label{quotient}
Given a smooth finite type Deligne-Mumford stack $F$ over $k$, there exists a sequence of blow-ups in smooth centers $\pi : F' \to F$, such that the coarse moduli space of $F'$ is quasi-projective. 
\end{theorem}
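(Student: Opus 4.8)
The plan is to use the local structure theory of Deligne–Mumford stacks together with Theorem \ref{resolution} (principalization on stacks) and the standard criterion that a normal algebraic space is (quasi-)projective as soon as it carries an ample line bundle — or, more practically, to reduce to a global quotient presentation and then invoke a known quasi-projectivity result for coarse spaces of quotient stacks. Concretely, by \cite[Prop 1.17]{t1} and \cite[Prop 2.8]{vi} (already used in Section \ref{2}), $F$ is étale-locally on its coarse space $X$ of the form $[X_i/H_i]$ with $X_i$ smooth quasi-projective and $H_i$ finite; so $X$ is étale-locally a quotient $X_i/H_i$, hence $X$ is a normal algebraic space of finite type. The obstruction to $X$ being a scheme (let alone quasi-projective) is that these local charts need not glue to a global quotient presentation, and $X$ may fail to be separated-enough or to admit an ample bundle.

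First I would make $F$ generically a quotient stack in the strong sense: there is a dense open substack $F^{\circ}\subseteq F$ which is an honest quotient $[Y/\mathrm{GL}_N]$ (or, after shrinking, a global quotient by a finite group on a scheme), because over the locus where the stabilizers are constant the stack is a gerbe-like quotient; the complement $F\setminus F^{\circ}$ is a closed substack $Z_1$, and one can find a coherent ideal $\mathcal{I}$ cutting it out (with its reduced structure, say). Apply Theorem \ref{resolution} to $\mathcal{I}$: after a sequence of blow-ups in smooth centers $\pi:F'\to F$, the pullback $\pi^{*}\mathcal{I}$ becomes locally principal, i.e. $Z_1$ pulls back to a (Cartier) divisor $E$ in the smooth stack $F'$. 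The point of doing this is that $F'$ is now covered by the open complement of $E$ (which is the preimage of $F^{\circ}$, hence a global quotient stack with quasi-projective coarse space) and by a neighbourhood of $E$; iterating the construction along the stratification of $F$ by stabilizer type — each blow-up strictly decreases the "defect locus" where the stack is not a global quotient — terminates after finitely many steps because the stabilizer stratification is finite, and one ends with $F'$ a global quotient stack $[V/G]$ with $V$ smooth and $G$ finite (or $\mathrm{GL}_N$), to which Lemma \ref{coarse moduli space} and the quasi-projectivity of $V/G$ apply.

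The key steps, in order: (1) set up the stratification of $F$ by inertia/stabilizer type, and identify on the open dense stratum a global quotient presentation whose coarse space is quasi-projective; (2) choose a coherent ideal cutting out the boundary of that stratum and principalize it via Theorem \ref{resolution}, turning the boundary into a divisor in the blown-up stack $F'$; (3) observe that the preimage of the boundary, being now a divisor with quasi-projective-coarse-space complement, can itself be handled by descending induction on the stratification, so that finitely many rounds of blow-ups yield a global quotient stack; (4) conclude by Lemma \ref{coarse moduli space} (pushing forward (quasi-)projectivity of the coarse space along representable projective maps) and the classical fact that $V/G$ is quasi-projective for $V$ quasi-projective and $G$ finite.

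The main obstacle, and the step where David Rydh's input is essential, is step (1)–(3): showing that blowing up in smooth centers lets one "improve" the local quotient presentations into a global one, i.e. that the failure-to-be-a-global-quotient is supported on a closed substack which principalization converts into a divisor that can then be absorbed into the inductive hypothesis. Subtleties here are that blow-ups of stacks in smooth centers remain smooth Deligne–Mumford stacks (this uses that blow-ups commute with the flat base change $U\to F$, as noted before Theorem \ref{resolution}), that the coarse-space functor commutes with these blow-ups well enough to track quasi-projectivity (Lemma \ref{coarse moduli space}), and that the induction genuinely terminates — which is where the finiteness of the stabilizer stratification of a finite-type Deligne–Mumford stack is used.
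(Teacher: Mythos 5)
Your proposal is genuinely different from the paper's argument, and it contains real gaps that would need to be filled by nontrivial theorems not available in this paper.

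The paper's proof does not attempt to make $F'$ into a global quotient stack at all. It works entirely with the coarse moduli space $X$: apply Chow's Lemma for algebraic spaces to get a projective $g\colon X'\to X$ with $X'$ a quasi-projective scheme; use Raynaud--Gruson to arrange that $g$ is a blow-up along a closed subspace $Z\subset X$; pull back to get $T := Z\times_X F$ and observe (using tameness, \cite{aov}) that $X'$ is the coarse moduli space of $F':=X'\times_X F$ and hence that $Bl_T F$, being a closed substack of $F'$, has quasi-projective coarse space; finally, principalize the ideal of $T$ via Theorem \ref{resolution} to get a sequence of blow-ups in \emph{smooth} centers $F_r\to F$, which by the universal property of the blow-up factors through a projective representable map $F_r\to Bl_T F$, and conclude by Lemma \ref{coarse moduli space}. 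The obstruction being resolved is simply that $X$ is an algebraic space rather than a quasi-projective scheme, and Chow's Lemma is exactly the tool for that.

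Your proposal instead tries to arrange that $F'$ is a global quotient $[V/G]$ with $V$ quasi-projective and $G$ finite, which is a much stronger conclusion than the theorem asserts. Two specific gaps: (i) the claim that a dense open substack $F^\circ$ is ``an honest quotient'' is not established and does not follow from stratifying by stabilizer type --- over the constant-stabilizer locus you get a gerbe, not automatically a global quotient, and producing one requires substantive input; (ii) the inductive scheme in steps (2)--(3) is not well-defined: you do not specify an invariant that strictly decreases, and it is not clear that principalizing the ideal of the boundary divisor makes the ``defect of being a global quotient'' any smaller, nor why having a divisorial boundary with global-quotient complement lets you absorb that divisor into the induction. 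In effect you are trying to reprove a deep result (that DM stacks become global quotients after blow-ups, in the spirit of Reichstein--Youssin and later work of Kresch--Vistoli and Rydh), which is far harder than the statement of Theorem \ref{quotient} and not what the communicated proof does. The paper sidesteps all of this by noticing that the needed quasi-projectivity lives entirely on the coarse space, where Chow's Lemma is available.
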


\begin{proof}

 Let $p : F \to X$ be the morphism to the coarse moduli space of $F$. $X$ is a separated algebraic space. By Chow's Lemma (\cite[Theorem 3.1]{kn}) we have a projective morphism $g :X' \to X$ from a quasi-projective scheme $X'$. Moreover by \cite[Corollary 5.7.14]{rg} we may assume that $g$ is a blow-up along a closed subspace $Z \subset X$.
Let $F' := X' \times_X F$. There is a morphism $p' : F' \to X'$. Since $F$ is tame  $X'$ is the coarse moduli space of $F'$ (\cite[Cor 3.3]{aov}). 
Let $T := Z \times_X F$ and let $\pi : Bl_{T} F \to F$ be the blow-up of $F$ along $T$.  Then $Bl_{T}F$ is the closure of $F \setminus T$ in $F'$.
As $F'$ is tame the coarse moduli space of $Bl_{T} F$ is a closed subscheme of $X'$. Hence $Bl_{T} F$ has quasi-projective coarse moduli space.

Now by  \ref{resolution} We have a sequence of blow-ups in smooth centers $\pi : F_r \to F$ such that the ideal sheaf defining $T$ is principalized. Hence there exists a canonical projective representable morphism $\pi' : F_r \to Bl_{T} F$. 
Since  $Bl_{T} F$ has quasi-projective coarse moduli space and $\pi'$ is a projective representable morphism, we have our result by lemma \ref{coarse moduli space}.
\end{proof}

\subsection{Chow motives and motives of proper Deligne-Mumford stack}

Let $f : X \to Y$ be a finite morphism between smooth schemes such that each connected component of $X$ maps surjectively to a connected component of $Y$ and generically over $Y$ the degree of $f$ is constant equal to $m$. Then the transpose of $\Gamma_f$ is a correspondence from $Y$ to $X$. This defines a morphism $^tf : \mathbb{Q}_{tr} (Y) \to \mathbb{Q}_{tr}(X)$ such that $ f \circ (\frac{1}{m} ^tf)$ is the identity.

\begin{remark} \label{inverse}
Suppose we are given a cartesian diagram of smooth schemes
\[
\xymatrix{
         Y' \ar[r]^{f'} \ar[d]^{h'} &X' \ar[d]^{h} \\
         Y \ar[r]^f  &X 
                                    }
\]
with $h$  \'etale. Assume that $f$ is a finite morphism such that each connected component of $Y$ maps surjectively to a connected component of $X$ and generically over $X$ the degree of $f$ is constant equal to $m$. Then $f'$ satisfies the same properties as $f$. Thus we have morphisms $^tf :\mathbb{Q}_{tr}(X) \to \mathbb{Q}_{tr} (Y)$ and $^tf' : \mathbb{Q}_{tr}(X') \to \mathbb{Q}_{tr}(Y')$. Using the definition of composition of finite correspondences one can easily verify 
that $(h') \circ (^tf') = (^tf) \circ (h)$.
\end{remark}

\begin{lemma} \label{finite}
Let $F$ be a smooth Deligne-Mumford stack. Assume that there exists a smooth scheme $X$ and a finite surjective morphism $g : X \to F$. Then 
$M(F)$ is a direct factor of $M(X)$.
\end{lemma}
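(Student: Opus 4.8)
The key idea is to realize $M(F)$ as the image of an idempotent correspondence on $M(X)$, using the finite morphism $g : X \to F$ together with its transpose. Since $g$ is finite and surjective and $F$ is a Deligne-Mumford stack, after passing to an \'etale atlas the morphism becomes a finite morphism of smooth schemes, and the "averaging" trick from Lemma \ref{motive of quotient stacks} applies degreewise.

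First I would choose an \'etale atlas $a : U \to F$ with associated \u{C}ech simplicial scheme $U_\bullet$, so that $M(F) \cong M(U_\bullet) = N\mathbb{Q}_{tr}(U_\bullet)$ by Corollary \ref{atlas motive}. Pulling back $g$ along $a$ gives a simplicial scheme $X_\bullet := X \times_F U_\bullet$ together with a morphism $g_\bullet : X_\bullet \to U_\bullet$ which, in each simplicial degree $i$, is a finite surjective morphism $g_i : X_i \to U_i$ of smooth schemes (finiteness and smoothness are preserved under \'etale base change, and surjectivity is preserved under the flat base change $U_i \to F$). Note that $X_\bullet$ is itself the \u{C}ech simplicial scheme of the composite atlas $X \to F$, so $M(X_\bullet) \cong M(X)$, again by Corollary \ref{atlas motive}. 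Let $m_i$ be the generic degree of $g_i$; since $X_\bullet$ is obtained from $X_0 = X$ by iterated fiber products over $F$ with the \'etale scheme $U_\bullet$, the constants $m_i$ are all equal to the generic degree $m$ of $g$, and the transposes assemble compatibly: by Remark \ref{inverse} applied to the \'etale structure maps of $U_\bullet$, the morphisms $\tfrac{1}{m}\,{}^t g_i : \mathbb{Q}_{tr}(U_i) \to \mathbb{Q}_{tr}(X_i)$ commute with the simplicial face and degeneracy maps, hence define a morphism of simplicial presheaves with transfers $\tfrac{1}{m}\,{}^t g_\bullet : \mathbb{Q}_{tr}(U_\bullet) \to \mathbb{Q}_{tr}(X_\bullet)$.

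Next I would observe that $g_\bullet \circ \bigl(\tfrac{1}{m}\,{}^t g_\bullet\bigr) = \mathrm{id}$ on $\mathbb{Q}_{tr}(U_\bullet)$, since this already holds in each simplicial degree by the defining property of the transpose recalled just before the statement. Applying the normalized-complex functor $N$ and then passing to $\mathbf{DM}^{\eff}(k,\mathbb{Q})$, we obtain morphisms $M(X) \to M(F)$ and $M(F) \to M(X)$ whose composite (in that order) is the identity on $M(F)$. Therefore $e := \bigl(\tfrac{1}{m}\,{}^t g_\bullet\bigr) \circ g_\bullet$ is an idempotent endomorphism of $M(X)$ and $M(F)$ is canonically identified with its image; since $\mathbf{DM}^{\eff}(k,\mathbb{Q})$ is idempotent-complete (being a triangulated category with infinite sums, or by Voevodsky's pseudo-abelianity results), $M(F)$ is a direct factor of $M(X)$.

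The main obstacle is the bookkeeping in the second paragraph: verifying that the transposes ${}^t g_i$ are actually compatible with the simplicial structure maps. This is where Remark \ref{inverse} is used — the face maps $U_i \to U_{i-1}$ and the projections in $X_\bullet$ fit into cartesian squares with \'etale horizontal (or vertical) maps, so $(h') \circ ({}^t g_i') = ({}^t g_i) \circ h$ for those maps $h$; one must check this covers all face and degeneracy maps and that the generic degrees match up, which is why the constancy of $m$ across simplicial degrees matters. Once the morphism of simplicial presheaves is in place, the rest is formal.
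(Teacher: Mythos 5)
Your argument follows the same route as the paper: form $X_\bullet := U_\bullet\times_F X$ with the degreewise finite surjective $g_\bullet : X_\bullet\to U_\bullet$, use Remark~\ref{inverse} to assemble the transposes into a morphism of simplicial presheaves with transfers $\tfrac{1}{m}\,{}^t g_\bullet$ splitting $g_\bullet$, and conclude that $M(F)\cong M(U_\bullet)$ is a direct factor of $M(X_\bullet)\cong M(X)$.

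The one genuine flaw is your justification of $M(X_\bullet)\cong M(X)$. You assert that $X_\bullet$ is ``the \u{C}ech simplicial scheme of the composite atlas $X\to F$'' and invoke Corollary~\ref{atlas motive}. This is wrong on three counts: $g : X\to F$ is finite, not \'etale, so it is not an atlas of $F$; the \u{C}ech simplicial scheme of $X\to F$ would be $X\times_F\cdots\times_F X$, which is not $X_i = X\times_F U\times_F\cdots\times_F U$; and in any case Corollary~\ref{atlas motive} applied to an atlas of $F$ would yield $M(\,\cdot\,)\cong M(F)$, not $M(X)$. The correct observation, and the one the paper uses, is that $X_\bullet$ is the \u{C}ech simplicial scheme of the \emph{\'etale cover} $U\times_F X\to X$ of the smooth scheme $X$ (one checks $U_i\times_F X\cong(U\times_F X)\times_X\cdots\times_X(U\times_F X)$, $i+1$ factors), whence $\mathbb{Q}_{tr}(X_\bullet)\cong\mathbb{Q}_{tr}(X)$ by \cite[Proposition~6.12]{mvw}. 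Two minor points: you should reduce to $F$ and $X$ connected at the start so the generic degree $m$ is a well-defined constant, and note that $X_0=U\times_F X$, not $X$, in your discussion of the degrees.
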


\begin{proof}
We may assume that $F$ and $X$ are connected.
Let  $a: U \to F$ be an atlas and  $U_{\bullet}$ the associated \u{C}ech complex. Set $V_{\bullet} := U_{\bullet} \times_F X$.  Then  $g'_{\bullet} :  V_{\bullet} \to U_{\bullet}$ is finite and surjective of constant degree $m$ in each simplicial degree. It follows from \ref{inverse} that $^tg'_{\bullet} : \mathbb{Q}_{tr} (U_{\bullet}) \to \mathbb{Q}_{tr} (V_{\bullet})$ is a morphism of simplicial sheaves with transfers such that $g'_{\bullet} \circ (\frac{1}{m} ^tg'_{\bullet}) = id$. Hence $\mathbb{Q}_{tr} (U_{\bullet})$ is a direct factor of $\mathbb{Q}_{tr} (V_{\bullet})$.

Since  $V_{\bullet}$ is a \u{C}ech resolution of $X$, we have $\mathbb{Q}_{tr} (V_{\bullet}) \cong \mathbb{Q}_{tr} (X)$ by \cite[Proposition 6.12]{mvw}. This proves the result.
\end{proof}

\begin{theorem} \label{Chow}
Let $F$ be a proper (resp. not necessarily proper) smooth Deligne-Mumford stack. Then $M(F)$ is a direct summand of the motive of a projective (resp. quasi-projective) variety.
\end{theorem}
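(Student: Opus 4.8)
The plan is to reduce the general statement to the combination of Theorem~\ref{quotient}, Lemma~\ref{finite}, and a descent argument through the coarse moduli space. First I would treat the proper case. Given a proper smooth Deligne-Mumford stack $F$, apply Theorem~\ref{quotient} to obtain a sequence of blow-ups in smooth centers $\pi : F' \to F$ such that the coarse moduli space $X'$ of $F'$ is quasi-projective; since $F$ is proper and the $F_i$ are obtained by (projective, hence proper) blow-ups, $F'$ is proper, so $X'$ is in fact a proper \emph{and} quasi-projective scheme, i.e. projective. By Theorem~\ref{blowupform} applied iteratively along the tower $F' \to F$, the motive $M(F')$ decomposes as $M(F)$ plus a sum of Tate twists of motives of the (smooth, proper Deligne-Mumford) blow-up centers; in particular $M(F)$ is a direct summand of $M(F')$. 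So it suffices to show $M(F')$ is a direct summand of the motive of a projective variety. Now $F'$ has projective coarse moduli space $X'$, but $M(F') \cong \mathbb{Q}_{tr}(X')$ by the coarse-moduli-space theorem (in characteristic zero), and $X'$ is itself projective though possibly singular; here I would invoke resolution of singularities to replace $X'$ by a smooth projective variety whose motive contains $M(X') \cong M(F')$ as a summand --- or, more directly, note that $F'$ is \emph{tame} and étale-locally a quotient $[V/H]$, so one can build a smooth scheme $X$ with a finite surjective morphism $X \to F'$ and apply Lemma~\ref{finite}.

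For the non-proper case the argument runs in parallel but drops the properness-driven projectivity upgrade. Starting from a smooth Deligne-Mumford stack $F$ of finite type, apply Theorem~\ref{quotient} to get blow-ups in smooth centers $\pi : F' \to F$ with quasi-projective coarse moduli space. As before, Theorem~\ref{blowupform} makes $M(F)$ a direct summand of $M(F')$, and it remains to realize $M(F')$ as a direct summand of the motive of a smooth quasi-projective variety. The key point is that $F'$ now admits a finite surjective morphism $g : X \to F'$ from a smooth quasi-projective scheme $X$: since $F'$ has quasi-projective coarse space and is a separated tame Deligne-Mumford stack, one can take an étale cover by quotient stacks $[V_i/H_i]$ with $V_i$ smooth quasi-projective, form the induced finite surjection, and (using that finite morphisms are affine) descend to a global finite surjection from a smooth scheme; quasi-projectivity of $X$ follows from quasi-projectivity of the coarse moduli space via Lemma~\ref{coarse moduli space}. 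Then Lemma~\ref{finite} gives that $M(F')$ is a direct summand of $M(X)$, completing the proof.

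The main obstacle I anticipate is the construction of the finite surjective morphism $g : X \to F'$ (or $g:X\to F$) from a \emph{smooth quasi-projective scheme}, which is exactly the hypothesis needed to feed into Lemma~\ref{finite}. Étale-locally writing $F'$ as $[V/H]$ gives finite surjections $V \to [V/H]$ locally, but gluing these into a single finite surjection from a scheme requires care: one wants to take a suitable closure or use the fact that for a tame stack with quasi-projective coarse space there is a global finite flat (or at least finite surjective) cover by a scheme --- this is where the quasi-projectivity of the coarse moduli space, together with the normalization/resolution available in characteristic zero, does the real work. Everything else (the blow-up decomposition, the coarse-moduli comparison, invoking Lemma~\ref{finite}) is formal given the results already established in the excerpt.
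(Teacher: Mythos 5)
Your proposal follows exactly the paper's structure: reduce to $F'$ with (quasi-)projective coarse moduli space via Theorem~\ref{quotient}, show $M(F)$ is a direct summand of $M(F')$ via Theorem~\ref{blowupform}, and finally produce a finite surjective morphism $g : X \to F'$ from a smooth (quasi-)projective scheme in order to invoke Lemma~\ref{finite}. You correctly flag the construction of $g$ as the crux, but this is precisely the step you leave unresolved, and your sketch of how to build it does not go through.

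The paper disposes of this step by a single citation: Kresch--Vistoli (\cite[Theorem 1]{kv}) shows that a separated Deligne--Mumford stack with quasi-projective coarse moduli space admits a finite \emph{flat} surjection from a smooth quasi-projective scheme (projective when the stack is proper), and this is exactly the hypothesis Lemma~\ref{finite} needs. Your proposed replacement --- taking an \'etale cover of $F'$ by quotient stacks $[V_i/H_i]$, forming the local finite surjections $V_i \to [V_i/H_i]$, and then ``descending to a global finite surjection from a scheme'' --- does not obviously work: the $V_i$ only map finitely onto the pieces $[V_i/H_i]$, and patching these into a single finite surjective morphism from a scheme onto $F'$ is a genuinely nontrivial problem (it is essentially what Kresch--Vistoli prove, and their argument is not a formal gluing). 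Your first alternative in the proper case (resolving the singularities of the coarse moduli space $X'$) also does not feed into Lemma~\ref{finite}, since a resolution $\tilde X \to X'$ is proper birational, not finite; $\mathbb{Q}_{tr}(X')$ being a retract of $\mathbb{Q}_{tr}(\tilde X)$ would require a separate argument. So the overall strategy is right, but the proof has a real gap at the cover-construction step that the paper closes by citing \cite[Theorem 1]{kv}.
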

 
\begin{proof}
We can assume that $F$ is connected. By \ref{quotient}  we get a sequence of blow-ups with smooth centers $ \pi : F' \to F$ such that $F'$ has (quasi)-projective coarse moduli space. By \ref{blowupform} $M(F)$ is a direct summand of $M(F')$.
By \cite[Theorem 1]{kv}  there exists a smooth (quasi)-projective variety $X$ and a finite flat morphism $g : X \to F'$. Hence $M(F')$ is a direct summand of $M(X)$ which proves our claim.
\end{proof}

Recall that the category of effective geometric motives
$\mathbf{DM}_{gm}^{eff}(k, \mathbb{Q})$ is the thick subcategory of $\mathbf{DM}^{\eff} (k, \mathbb{Q})$ generated by the motives $M(X)$ for $X \in Sm/k$ (see \cite[Definition 14.1]{mvw}). 

\begin{corollary}
For any smooth finite type Deligne-Mumford stack $F$, $M(F)$ is an effective geometric motive.
\end{corollary}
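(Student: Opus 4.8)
The plan is to reduce the statement to Theorem \ref{Chow} together with the definition of $\mathbf{DM}_{gm}^{eff}(k,\mathbb{Q})$. First I would invoke Theorem \ref{Chow}: given a smooth finite type Deligne-Mumford stack $F$, there is a smooth quasi-projective variety $X$ and an idempotent $e \in \mathrm{End}_{\mathbf{DM}^{\eff}(k,\mathbb{Q})}(M(X))$ such that $M(F)$ is the image of $e$, i.e.\ $M(F)$ is a direct summand of $M(X)$ in $\mathbf{DM}^{\eff}(k,\mathbb{Q})$. Since $X$ is in particular a smooth $k$-scheme, $M(X)$ lies in $\mathbf{DM}_{gm}^{eff}(k,\mathbb{Q})$ by the very definition of the latter as the thick subcategory generated by the $M(X)$, $X \in Sm/k$.

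The second and only real point to address is that a thick subcategory is closed under direct summands (retracts), so that $M(F)$, being a retract of an object of $\mathbf{DM}_{gm}^{eff}(k,\mathbb{Q})$, again lies in $\mathbf{DM}_{gm}^{eff}(k,\mathbb{Q})$. This is part of the standard definition of \emph{thick} (= \'epaisse, in the sense of closed under shifts, cones, and retracts); in the reference \cite[Definition 14.1]{mvw} the geometric category is constructed precisely as the idempotent completion of the triangulated subcategory generated by the $M(X)$, so closure under summands is automatic. I would simply cite this. Concretely: write $M(X) \cong M(F) \oplus M(F)'$ for some object $M(F)'$; since $M(X) \in \mathbf{DM}_{gm}^{eff}(k,\mathbb{Q})$ and this category is idempotent-complete inside $\mathbf{DM}^{\eff}(k,\mathbb{Q})$, both summands belong to it.

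There is essentially no obstacle here; the corollary is a formal consequence of Theorem \ref{Chow}. The only mild subtlety is making sure the notion of "thick subcategory" used in \cite[Definition 14.1]{mvw} does include idempotent completeness (Voevodsky's $\mathbf{DM}_{gm}^{eff}$ is defined as a pseudo-abelian envelope), which it does. I would phrase the proof in one or two sentences, as below.

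\begin{proof}
By Theorem \ref{Chow}, $M(F)$ is a direct summand of $M(X)$ for a smooth quasi-projective $k$-variety $X$. Since $X \in Sm/k$, the motive $M(X)$ belongs to $\mathbf{DM}_{gm}^{eff}(k,\mathbb{Q})$ by definition. As $\mathbf{DM}_{gm}^{eff}(k,\mathbb{Q})$ is a thick subcategory of $\mathbf{DM}^{\eff}(k,\mathbb{Q})$ (in particular it is idempotent complete, see \cite[Definition 14.1]{mvw}), it is closed under direct summands, whence $M(F) \in \mathbf{DM}_{gm}^{eff}(k,\mathbb{Q})$.
\end{proof}
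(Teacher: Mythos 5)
Your proof is correct and is exactly the argument the paper intends: the corollary is stated without proof immediately after Theorem \ref{Chow} precisely because it follows formally from that theorem together with the fact that a thick subcategory is closed under direct summands. Nothing to add.
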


\begin{remark}
\label{Chow embedding}
By \cite[Proposition 20.1]{mvw} the category of effective Chow motives embeds 
into $\mathbf{DM}^{\eff} (k, \mathbb{Q})$. Theorem \ref{Chow} shows that $M(F)$ lies in the essential image of this embedding for any smooth proper Deligne-Mumford stack $F$.

\end{remark}

\section{Motivic cohomology of stacks} \label{4}

Let $F$ be a smooth Deligne-Mumford stack. For each integer $i$ let $\mathbb{Q}(i) \in \mathbf{DM}^{\eff}(k, \mathbb{Q}) $ denote the motivic complex of weight $i$ with rational coefficients (see \cite[Definition 3.1]{mvw}). 

\begin{definition}
 The \'etale site $F_{\et}$ is defined as follows. The objects of $F_{\et}$  are couples $(X,f)$ with $X$ a scheme and $f : X \to F$ a representable \'etale morphism. A morphism from $(X,f)$ to $(Y,g)$ is a couple
$(\phi, \alpha)$, where $\phi:X \to Y$ is a morphism of schemes and 
$\alpha : f \cong g \circ \phi$ is a $2$-isomorphism. Covering families of an object 
$(U, u)$ are defined as families $\left\{u_i:U_i \rightarrow U \right\}_{i \in I}$ such that the $u_i$'s are \'etale and $\cup{u}_i : \coprod_i U_i \to U$ is surjective.
\end{definition}

\begin{definition}
The motivic cohomology of $F$ with rational coefficients is defined as $H_M^{2i-n} (F,i) := H^{2i-n} (F_{\et}, \mathbb{Q}(i)|_{F_{\et}})$.
\end{definition}

\begin{remark}
In \cite[3.0.2]{j2}, motivic cohomology of an algebraic stack $F$ is defined using the smooth site of $F$.  For Deligne-Mumford stacks, this coincides with our definition by \cite[Proposition 3.6.1(ii)]{j2}. 
\end{remark}

\begin{lemma} \label{motivic cohomology}
Let $F$ be a Deligne-Mumford stack. We have an isomorphism 
$$H_M^{2i-n} (F,i) \simeq Hom_{\mathbf{DM}^{\eff}(k, \mathbb{Q})} (M(F),\mathbb{Q}(n)[2i-n]).$$
\end{lemma}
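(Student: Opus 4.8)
The plan is to reduce the statement about motivic cohomology of $F$ to the analogous statement for schemes, using the \v{C}ech simplicial scheme attached to an atlas. Let $a : U \to F$ be an \'etale atlas and let $U_\bullet$ be the associated \v{C}ech simplicial scheme. First I would observe that the \'etale site $F_{\et}$ can be computed via the cosimplicial site $(U_\bullet)_{\et}$: since $a$ is a representable \'etale surjection, $\{U \to F\}$ is a covering family, and cohomology on $F_{\et}$ of any complex of sheaves is computed by the corresponding \v{C}ech-to-derived-functor spectral sequence, which here takes the form of the hypercohomology spectral sequence $H^p(U_q, \mathbb{Q}(i)) \Rightarrow H^{p+q}(F_{\et}, \mathbb{Q}(i)|_{F_{\et}})$. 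Concretely this identifies $H^{2i-n}(F_{\et}, \mathbb{Q}(i)|_{F_{\et}})$ with the hypercohomology $\mathbb{H}^{2i-n}$ of the total complex of $\mathbb{Q}(i)$ evaluated on $U_\bullet$, i.e.\ with the cohomology of $\mathrm{Tot}\,\mathbb{Q}(i)(U_\bullet)$ in the appropriate derived sense.

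Next I would recall the representability of motivic cohomology for smooth schemes: for $X \in Sm/k$ one has $H^{2i-n}_M(X, i) \simeq \mathrm{Hom}_{\mathbf{DM}^{\eff}(k,\mathbb{Q})}(M(X), \mathbb{Q}(i)[2i-n])$, which is \cite[Theorem 3.6 / Lemma 4.2 type statement]{mvw} together with the fact that $\mathbb{Q}(i)$ is $\mathbb{A}^1$-local and \'etale-fibrant (rationally, \'etale and Nisnevich motivic cohomology agree, so we may freely pass to the \'etale-local fibrant replacement). Applying this degreewise to $U_\bullet$ and taking the homotopy limit over $\Delta$, the Hom-spectral sequence in $\mathbf{DM}^{\eff}(k,\mathbb{Q})$ computing $\mathrm{Hom}(M(U_\bullet), \mathbb{Q}(i)[2i-n])$ out of $M(U_\bullet) = N\mathbb{Q}_{tr}(U_\bullet)$ has $E_1$-page $H^p(U_q, \mathbb{Q}(i))$ and converges to $\mathrm{Hom}_{\mathbf{DM}^{\eff}(k,\mathbb{Q})}(M(U_\bullet), \mathbb{Q}(i)[\ast])$. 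This is the \emph{same} spectral sequence as the hypercohomology one above, so I would conclude $H^{2i-n}(F_{\et}, \mathbb{Q}(i)|_{F_{\et}}) \simeq \mathrm{Hom}_{\mathbf{DM}^{\eff}(k,\mathbb{Q})}(M(U_\bullet), \mathbb{Q}(i)[2i-n])$.

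Finally I would invoke Corollary \ref{atlas motive}, which gives a canonical isomorphism $M(U_\bullet) \cong M(F)$ in $\mathbf{DM}^{\eff}(k,\mathbb{Q})$, to rewrite the right-hand side as $\mathrm{Hom}_{\mathbf{DM}^{\eff}(k,\mathbb{Q})}(M(F), \mathbb{Q}(i)[2i-n])$; renaming $i$ in the cohomological degree appropriately (the statement as written has $\mathbb{Q}(n)[2i-n]$, so one matches weights and shifts) yields the claim. One should also check naturality of the comparison in $F$, but this follows from functoriality of the \v{C}ech construction and of the representability isomorphism for schemes.

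The main obstacle I anticipate is making precise the identification of the two spectral sequences --- i.e.\ that the \'etale hypercohomology of $\mathbb{Q}(i)|_{F_{\et}}$ computed via the atlas agrees with the $\mathbf{DM}$-internal Hom out of the simplicial motive. This is really the content of the proof of Proposition \ref{motivefunctor}, where the adjunction $(N\mathbb{Q}_{tr}, \Gamma)$ was shown to be compatible with \'etale hypercovers and $\Gamma$ of a local-fibrant complex is $S$-local; the cleanest route is to feed $\mathbb{Q}(i)$ (a local-fibrant, $\mathbb{A}^1$-local object, up to fibrant replacement) into that machinery and read off that $\mathrm{Hom}_{\mathbf{DM}^{\eff}(k,\mathbb{Q})}(M(F), \mathbb{Q}(i)[m]) \cong \mathbb{H}^m_{\et}(F, \mathbb{Q}(i))$ directly, rather than comparing spectral sequences by hand. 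A secondary subtlety is the passage between the \'etale site $F_{\et}$ as defined here (objects with a $2$-isomorphism datum) and the \v{C}ech nerve $U_\bullet$; this is handled by the standard fact that for a representable \'etale surjection the \v{C}ech nerve is a hypercover in $F_{\et}$, so cohomology agrees --- combined with Proposition \ref{spaces} and Lemma \ref{mine} which already identify $Sp(F)$ with $U_\bullet$ \'etale-locally.
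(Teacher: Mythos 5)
Your overall strategy is exactly the paper's: pass to the \v{C}ech simplicial scheme $U_{\bullet}$ of an atlas, invoke representability of motivic cohomology for smooth schemes, and descend. But the middle step as you first phrase it --- noting that the two spectral sequences have the same $E_1$-page and then ``concluding'' --- is not yet a proof: coincidence of $E_1$-pages does not yield an isomorphism of abutments without a morphism of filtered complexes (or of spectral sequences) inducing it. You correctly flag this as the obstacle and correctly identify the remedy, namely to realise both sides as cohomology of comparable total complexes via fibrant replacements, and that is precisely what the paper does. It chooses an injective-local fibrant replacement $a:\mathbb{Q}(i)\to L$ in $K(PST(k,\mathbb{Q}))$ and a further injective-local fibrant replacement $b:L|_{F_{\et}}\to M$ in $K(F_{\et})$, then identifies $H^{2i-n}_M(F,i)$ with the $(2i-n)$-th cohomology of $\mathrm{Tot}\,Hom(\mathbb{Q}(U_\bullet),M)$, identifies $Hom_{\mathbf{DM}^{\eff}(k,\mathbb{Q})}(M(F),\mathbb{Q}(i)[2i-n])$ with the $(2i-n)$-th cohomology of $\mathrm{Tot}\,Hom(\mathbb{Q}_{tr}(U_\bullet),L)$ using that $L$ is $\mathbb{A}^1$-local, and finally reduces everything to checking that $L(X)\to M(X)$ is a quasi-isomorphism for each smooth scheme $X$, which follows from $\mathbb{E}xt^n(\mathbb{Q}_{tr}(X),\mathbb{Q}(i))\cong H^n_{\et}(X,\mathbb{Q}(i))$ \cite[6.25]{mvw}. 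So the paper never compares spectral sequences; it compares the total complexes directly, and your instinct that routing through the fibrant-replacement machinery of Proposition \ref{motivefunctor} would be cleaner is the correct one. (One small further note: the $\mathbb{Q}(n)$ in the statement is a typo for $\mathbb{Q}(i)$, as you also observed.)
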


\begin{proof}
Let $U \to F$ be an atlas and $U_{\bullet}$ be the associated \u{C}ech complex.  We have an \'etale weak equivalence $\mathbb{Q}(U_{\bullet}) \to \mathbb{Q}$ of complexes of sheaves on $F_{\et}$. Here $\mathbb{Q}$ is the constant sheaf on $F_{\et}$. 
Writing $D(F_{\et})$ for the derived category of sheaves of $\mathbb{Q}$-vector spaces on $F_{\et}$, we thus have 
$$H_M^{2i-n} (F,i) \cong  Hom_{D(F_{\underline{\et}})}(\mathbb{Q}(U_{\bullet}), \mathbb{Q}(i)[2i-n]).$$  Let $a : \mathbb{Q}(i) \to L$ be a fibrant replacement for the injective local model structure on $K(PST(k))$ and let $b: L|_{F_{\et}} \to M$ be a fibrant replacement for the injective local model structure on $K(F_{\et})$. 

Since both $a$ and $b$ are \'etale local weak equivalences the composition $b\circ a : \mathbb{Q}(i)|_{F_{\et}}
 \to M$ is an \'etale weak equivalence. It follows that 
$$Hom_{D(F_{\et})}(\mathbb{Q}(U_{\bullet}), (\mathbb{Q}(i)|_{F_{\et}})[2i-n]) \cong Hom_{Ho(K(F_{\et}))}(\mathbb{Q}(U_{\bullet}), M[2i-n]).$$  Using \cite[2.7.5]{wei}, it follows that $ H_M^{2i-n} (F,i)$ is the
$(2i-n)$-th cohomology of the complex $Tot(Hom(\mathbb{Q}(U_{\bullet}), M))$.

On the other hand, since $a$ is an \'etale weak equivalence and $\mathbb{Q}(i)$ is 
$\mathbb{A}^1$-local, $L$ is also $\mathbb{A}^1$-local. It follows that 
$$Hom_{\mathbf{DM}^{\eff}(k, \mathbb{Q})} (\mathbb{Q}_{tr}(U_{\bullet}), \mathbb{Q}(i)[2i-n]) \cong Hom_{Ho(K(PST(k)))} (\mathbb{Q}_{tr}(U_{\bullet}), L[2i-n]).$$ Again by (\cite[2.7.5]{wei}), the right hand side is same as $(2i-n)$-th cohomology of the complex  $Tot(Hom(\mathbb{Q}_{tr}(U_{\bullet}), L))$. 

To prove the lemma it is now sufficient to show that 
$L(X) \to M(X)$ is a quasi-isomorphism for any smooth $k$-scheme $X$. 
By definition 
$$H^n(L(X)) \cong H^n(Hom(\mathbb{Q}_{tr}(X), F)) \cong \mathbb{E}xt^n(\mathbb{Q}_{tr}(X), \mathbb{Q}(i)[n]) $$ and by \cite[6.25]{mvw}
we have $$\mathbb{E}xt^n(\mathbb{Q}_{tr}(X), \mathbb{Q}(i)[n]) = H^n_{\et}(X, \mathbb{Q}(i))$$ which is same as $H^n(M(X))$.
\end{proof}

\begin{corollary} \label{picard}
Let $F$ be a Deligne-Mumford stack and let $\mathcal{O}_F$ be the structure sheaf. Then we have an isomorphism
$$Pic(F) \otimes \mathbb{Q} \cong H^1_{\et}(F, \mathcal{O}_F^{\times} \otimes \mathbb{Q}) \cong Hom_{\mathbf{DM}^{\eff} (k, \mathbb{Q})} (M(F), \mathbb{Q}(1)[2])$$
\end{corollary}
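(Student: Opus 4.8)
The plan is to deduce Corollary \ref{picard} directly from Lemma \ref{motivic cohomology} together with a computation of $H^1_{\et}(F,\mathcal{O}_F^\times)\otimes\mathbb{Q}$ identifying it with rational motivic cohomology in degree $(2,1)$. First I would apply Lemma \ref{motivic cohomology} in the case $n=0$, $2i-n=2$, i.e.\ $i=1$, to obtain a canonical isomorphism
\[
H_M^{2}(F,1)\;\simeq\;\mathrm{Hom}_{\mathbf{DM}^{\eff}(k,\mathbb{Q})}(M(F),\mathbb{Q}(1)[2]),
\]
where $H_M^2(F,1)=H^2(F_{\et},\mathbb{Q}(1)|_{F_{\et}})$. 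So the only thing left is to show $H^2(F_{\et},\mathbb{Q}(1))\cong \mathrm{Pic}(F)\otimes\mathbb{Q}\cong H^1_{\et}(F,\mathcal{O}_F^\times\otimes\mathbb{Q})$.

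Next I would recall the standard identification of the weight-one motivic complex with a shift of the units sheaf: $\mathbb{Q}(1)\cong \mathcal{O}^\times\otimes\mathbb{Q}[-1]$ in the derived category (this is the rational version of \cite[Theorem 4.1]{mvw}, $\mathbb{Z}(1)\cong\mathbb{G}_m[-1]$). Restricting to the \'etale site $F_{\et}$ and using that this is a quasi-isomorphism of complexes of \'etale sheaves, one gets
\[
H^2(F_{\et},\mathbb{Q}(1)|_{F_{\et}})\;\cong\;H^2(F_{\et},(\mathcal{O}_F^\times\otimes\mathbb{Q})[-1])\;\cong\;H^1_{\et}(F,\mathcal{O}_F^\times\otimes\mathbb{Q}).
\]
Finally, $H^1_{\et}(F,\mathcal{O}_F^\times)$ is by definition the Picard group $\mathrm{Pic}(F)$ of line bundles on the Deligne-Mumford stack $F$ (this is the standard fact that $\mathrm{Pic}=H^1(\mathbb{G}_m)$, valid on any ringed site, and in particular on $F_{\et}$), so tensoring with $\mathbb{Q}$ — which is exact — gives $H^1_{\et}(F,\mathcal{O}_F^\times\otimes\mathbb{Q})\cong \mathrm{Pic}(F)\otimes\mathbb{Q}$. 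Chaining the three isomorphisms yields the claim.

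The main subtlety — rather than a serious obstacle — is making sure the \'etale cohomology computation on the stack $F$ is literally the one controlled by Lemma \ref{motivic cohomology}: there the motivic complex $\mathbb{Q}(i)$ was restricted to $F_{\et}$ and its cohomology computed via the \u{C}ech complex $\mathbb{Q}(U_\bullet)$ of an atlas, and one must check that the comparison $\mathbb{Q}(1)|_{F_{\et}}\simeq(\mathcal{O}_F^\times\otimes\mathbb{Q})[-1]$ is compatible with this, i.e.\ that it is induced by the genuine sheaf-level quasi-isomorphism on $Sm/k$ pulled back to $F_{\et}$ along the atlas. Since the identification $\mathbb{Z}(1)\simeq\mathbb{G}_m[-1]$ is functorial for \'etale (indeed any) morphisms, it descends to $F_{\et}$, so no new work is needed; one just invokes functoriality. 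A minor point is that $\mathcal{O}_F^\times$ on $F_{\et}$ is flasque-acyclic enough for $H^1$ to be $\mathrm{Pic}$, which is classical for the \'etale site of a Deligne-Mumford stack. With these remarks the proof is a one-line chain of canonical isomorphisms, which is presumably why the author states it as a corollary.
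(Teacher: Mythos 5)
Your proposal is correct and follows essentially the same route as the paper's proof: both reduce the second isomorphism to Lemma \ref{motivic cohomology} (with $i=1$) together with the identification $\mathbb{Q}(1)\simeq \mathcal{O}^\times\otimes\mathbb{Q}[-1]$ from \cite[Theorem 4.1]{mvw}, and both appeal to a classical statement (the paper cites Mumford) for $\mathrm{Pic}(F)\otimes\mathbb{Q}\cong H^1_{\et}(F,\mathcal{O}_F^\times\otimes\mathbb{Q})$. The only stylistic difference is that you spell out the functoriality check for restricting the quasi-isomorphism to $F_{\et}$, which the paper leaves implicit.
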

\begin{proof}
The first isomorphism follows from \cite[page 65, 67]{mum}.   By \cite[Theorem 4.1]{mvw} $\mathcal{O}_F^*[1] \otimes \mathbb{Q} \cong \mathbb{Q}(1)[2]$. So the second isomorphism is a particular case of lemma \ref{motivic cohomology}.

\end{proof}

\begin{remark}
From the proofs, it is easy to see that 
Lemma \ref{motivic cohomology} and Corollary 
\ref{picard} are true integrally if we use
Voevodsky's category of \'etale motives with integral 
coefficients
$\mathbf{DM}^{\eff,\,\et}(k,\mathbb{Z})$.
\end{remark}

\section{Chow motives of stacks and comparisons} 
\label{5}

Let $F$ be a smooth Deligne-Mumford stack.

\begin{definition}[\cite{j2}] 
The codimension $m$ rational Chow group of $F$ is defined to be
$$A^m(F) := H_M^{2m} (F,m)_{\mathbb{Q}}.$$
\end{definition}

\begin{remark}
In \cite{g,t}, the rational Chow groups are defined as the \'etale cohomology of suitable $K$-theory sheaves. This agrees with our definition by \cite[Theorem 3.1, 5.3.10]{j2}.
\end{remark}
Let  $\mathcal{M}_k$ (resp. $\mathcal{M}_k^{\eff}$\,) be the category of covariant Chow motives (resp. effective Chow motives) with rational coefficients.
The construction of the category of Chow motives for smooth and proper Deligne-Mumford stacks using the theory $A^*$ was done in \cite[\S 8]{bm}.  
We will denote the category of Chow motives (resp. effective Chow motives) for smooth and proper Deligne-Mumford stacks by $\mathcal{M}_k^{DM}$ (resp. $\mathcal{M}_k^{DM,\,\eff}$\,).

Let $C$ be a symmetric monoidal category and let $X \in Ob(C)$. Recall that an object $Y \in C$ is called a strong dual of $X$ if there exist two morphisms 
$coev: \mathds{1} \to Y \otimes X$ 
and 
$ev : X \otimes Y \to \mathds{1}$, 
such that the composition of 
\begin{equation}
\label{eq:dual-1}
X \xrightarrow{id \otimes coev } X \otimes Y \otimes X \xrightarrow{ev \otimes id} X
\end{equation}
and the composition of
\begin{equation}
\label{eq:dual-2}
Y \xrightarrow{coev \otimes  id} Y \otimes X \otimes Y \xrightarrow{id \otimes ev} Y
\end{equation}
are identities.

\begin{lemma}
\label{dual}
Let $F$ be a proper smooth Deligne-Mumford stack of 
pure dimension $d$. 
Then $h_{DM}(F) := (F, \Delta_F, 0)$ has a strong dual 
in $\mathcal{M}_k^{DM}$. It is given by
$(F,\Delta_F,-d)$.
\end{lemma}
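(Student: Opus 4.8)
The plan is to verify the two zig-zag identities \eqref{eq:dual-1} and \eqref{eq:dual-2} directly, by reducing everything to intersection theory of cycles on the smooth proper Deligne-Mumford stack $F$, exactly as in the classical case of smooth projective varieties. Recall that in $\mathcal{M}_k^{DM}$ the Hom groups are built from the rational Chow groups $A^*$ of products of Deligne-Mumford stacks, and composition of correspondences is defined by the usual formula involving pullback, intersection product, and pushforward along the projections; all of this is available for smooth proper Deligne-Mumford stacks over $k$ because of the properties of $A^*$ established in the references \cite{g,j1,j2,vi,eg} (proper pushforward, flat pullback, the projection formula, and the fact that $A^*$ is a ring with the expected functoriality rationally). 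The key point that makes the argument go through unchanged is that $F\times_k F$ is again a smooth proper Deligne-Mumford stack of pure dimension $2d$, so its diagonal $\Delta_F$ defines a class in $A^d(F\times F)$, and the Künneth-type manipulations with the diagonal make sense.

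The concrete steps would be: first, set $\mathds{1} = h_{DM}(\mathrm{Spec}\,k)$ and define $ev : h_{DM}(F)\otimes (F,\Delta_F,-d) \to \mathds{1}$ to be the correspondence given by $[\Delta_F] \in A^d(F\times F) = A^d(F\times F)$ viewed as a morphism $F\times F \to \mathrm{Spec}\,k$ of the appropriate degree, and $coev : \mathds{1} \to (F,\Delta_F,-d)\otimes h_{DM}(F)$ to be the correspondence given by the same diagonal class but read in the other direction; the Tate twists $-d$ and $+d$ are inserted precisely so that $ev$ and $coev$ land in the correct graded pieces (this is where the hypothesis that $F$ has pure dimension $d$ is used). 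Second, unwind the composite \eqref{eq:dual-1}: it is a correspondence from $F$ to $F$ obtained by composing $[\Delta_F]$ with $[\Delta_F]$ through the middle factor, and a direct computation with the projection formula on $F\times F\times F$ — using that pushing the triple diagonal forward and intersecting is the same as the identity correspondence $[\Delta_F]$ — shows the composite equals $[\Delta_F]$, i.e. the identity of $h_{DM}(F)$. Third, do the symmetric computation for \eqref{eq:dual-2}. Finally, remark that the grading bookkeeping (the shifts by $\pm d$) is consistent, so $(F,\Delta_F,-d)$ is indeed an object of $\mathcal{M}_k^{DM}$ and serves as the strong dual.

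The main obstacle I anticipate is purely formal rather than deep: making sure that the intersection-theoretic identities for $A^*$ of Deligne-Mumford stacks (the projection formula and the compatibility of pushforward with the intersection product on a triple product $F\times F\times F$) are available in the generality and with the functoriality needed, and that the twisting conventions in the definition of $\mathcal{M}_k^{DM}$ from \cite{bm} match the ones used here. Once those are in place, the verification of the two triangle identities is the same line-by-line computation as for smooth projective varieties, because $F\times F$ and $F\times F\times F$ are smooth and proper Deligne-Mumford stacks and $A^*$ behaves on them exactly as Chow groups behave on smooth projective schemes. One could alternatively phrase the whole argument by invoking Toen's equivalence $\mathcal{M}_k \simeq \mathcal{M}_k^{DM}$ \cite{t} together with the fact that $F$ has a finite surjection from a smooth projective variety (as in Theorem~\ref{Chow}), but the direct diagonal computation is cleaner and self-contained.
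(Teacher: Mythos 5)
Your proposal matches the paper's proof: both define $ev$ and $coev$ by the diagonal class $\Delta_F\in A^d(F\times F)$ (with the twist $-d$ placed exactly as you describe) and verify the zig-zag identities by a direct intersection-theoretic computation, using that $A^*$ of smooth proper Deligne-Mumford stacks has proper pushforward, intersection products, and the projection formula rationally. The only cosmetic difference is that the paper carries out the composite through the quintuple product $F^{\times 5}$ (intersecting $\Delta_F\times\Delta_F\times F$ with $F\times\Delta_F\times\Delta_F$ and pushing forward to $F\times F$), while you phrase it via the projection formula on $F^{\times 3}$, but these are the same calculation.
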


\begin{proof}
Set $h_{DM}(F)^*:=(F,\Delta_F,-d)$. We need to give morphisms $coev:\mathds{1} \to h_{DM}(F)^* \otimes h_{DM}(F)$ and 
$ev:h_{DM}(F) \otimes h_{DM}(F)^* \to \mathds{1}$, such that \eqref{eq:dual-1} and \eqref{eq:dual-2} are satisfied.
The morphisms $coev$ and $ev$ are given by $\Delta_F \in A^d(F \times F)$. To compute the composition of
\eqref{eq:dual-1}, we observe that
intersection of the cycles $\Delta_F \times \Delta_F \times F$ and $F \times \Delta_F \times \Delta_F$ in $F \times F \times F \times F \times F$ is equal to $\delta(F)$ where $\delta:F \to F \times F \times F \times F \times F$ is the diagonal morphism.
The push-forward to $F\times F$ of the latter is simply the diagonal 
of $F\times F$. This shows that the composition of
\eqref{eq:dual-1} is the identity of $h_{DM}(F)$. 
The composition of 
\eqref{eq:dual-2} is treated using the same method.
\end{proof}

By \cite[Theorem 2.1]{t} the natural functor 
$e: \mathcal{M}_k \to \mathcal{M}_k^{DM}$ is an equivalence of $\mathbb{Q}$-linear tensor categories. 
This equivalence preserves the subcategories of effective motives.
Thus, after inverting this equivalence we can associate an effective Chow motive 
$h(F) \in \mathcal{M}^{\eff}_k$ to every smooth and proper Deligne-Mumford stack $F$. 
On the other hand, 
by \cite[Proposition 20.1]{mvw} there exists a fully faithful functor $\iota:\mathcal{M}^{\eff}_k \to \mathbf{DM}^{\eff}(k, \mathbb{Q})$.

\begin{theorem}
\label{thm:compare-toen}
Let $F$ be a smooth proper Deligne-Mumford stack. Then $M(F)  \cong \iota \circ h(F)$.

\end{theorem}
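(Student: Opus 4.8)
The plan is to exhibit a natural isomorphism by combining the structural results already established: Theorem \ref{Chow} (or rather the chain of blow-up/finite-cover reductions behind it) tells us that $M(F)$ and $h(F)$ are both built, compatibly, out of motives of smooth projective \emph{schemes}, and on schemes the comparison $M(X)\cong\iota\circ h(X)$ is classical (\cite[Proposition 20.1]{mvw}). So the real content is to upgrade this compatibility to stacks. Concretely, I would first recall that both sides are contravariant-functorial and monoidal, and that by Lemma \ref{dual} (on the Chow side) and by Theorem \ref{gysin triangle} together with Poincar\'e duality for geometric motives (on the Voevodsky side) both $h(F)$ and $M(F)$ are strongly dualizable with the expected dual $M(F)^\vee\cong M(F)(-d)[-2d]$. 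Dualizability is what lets one pass between the homological and cohomological descriptions and, crucially, reduces a morphism $\iota\circ h(F)\to M(F)$ to a suitable element of a morphism group that can be pinned down cycle-theoretically.

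The key steps, in order: (i) Construct a candidate morphism $\iota\circ h(F)\to M(F)$ — or equivalently, using dualizability, an element of $\mathrm{Hom}(\mathds 1,\,M(F)\otimes(\iota\circ h(F))^\vee)$; the natural source is the class of the diagonal. To make this precise I would use Appendix \ref{appendix-A}: the functor $\omega:\mathcal M_k^{\eff}\to PSh(\mathcal V_k)$ is fully faithful, and both $A^*(F\times F)$ and the Hom-group $\mathrm{Hom}_{\mathbf{DM}^{\eff}}(M(F),M(F)(d)[2d])$ receive $\Delta_F$ compatibly (the latter via Lemma \ref{motivic cohomology}, which identifies motivic cohomology of $F$ with morphisms out of $M(F)$, and via the Chow-group identification $A^m(F)=H_M^{2m}(F,m)$). (ii) Check that this morphism is an isomorphism. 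For this I pick an atlas $U\to F$, pass to the \v Cech simplicial scheme $U_\bullet$, and use Corollary \ref{atlas motive} to identify $M(F)$ with $M(U_\bullet)=N\mathbb Q_{tr}(U_\bullet)$, while $h(F)$ is identified via Toen's equivalence with the image of a corresponding idempotent correspondence; degreewise on $U_\bullet$ the comparison is the scheme-level statement, and one checks the simplicial/totalization passages are compatible. Alternatively — and probably cleaner — I would use Theorem \ref{quotient} plus Theorem \ref{blowupform}: after a tower of smooth blow-ups $F'\to F$ the coarse space of $F'$ is quasi-projective, so by Theorem \ref{Chow} $F'$ admits a finite flat cover $g:X\to F'$ by a smooth projective variety; then $M(F')$ is a direct summand of $M(X)$ cut out by an explicit projector $\tfrac1m\,{}^tg\circ g$, and $h(F')$ is cut out of $h(X)$ by the \emph{same} projector (both are the "transfer" idempotent, which makes sense on Chow motives of schemes and is matched by $\iota$). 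The blow-up formulas on both sides have matching shapes (Theorem \ref{blowupform} vs.\ the projective-bundle/blow-up formula for Chow motives), so one reconstructs $M(F)\cong\iota\circ h(F)$ by descending along the tower.

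I would organize the write-up as: reduce to $F$ with quasi-projective (indeed projective, since $F$ is proper) coarse space via \ref{quotient} and \ref{blowupform}; handle the finite-cover step via \ref{finite} and \ref{inverse}, checking the projector is the same on both sides; and finally invoke the scheme-level comparison and the full faithfulness of $\iota$ and of $\omega$ (Appendix \ref{appendix-A}) to conclude that the two summands of the (isomorphic) motives of the scheme $X$ agree. Naturality/canonicity of the isomorphism would be extracted from the fact that all the reduction steps are natural and the scheme-level comparison is.

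The main obstacle I anticipate is step (i): making the candidate comparison morphism genuinely \emph{canonical} and \emph{well-defined}, independent of the auxiliary atlas, blow-up tower, and finite cover. Matching the projectors requires knowing that the "divide by the degree, transpose the graph" operation on Chow correspondences of schemes is intertwined by $\iota$ with the corresponding operation in $\mathbf{DM}^{\eff}$ — this is where Appendix \ref{appendix-A} and Lemma \ref{motivic cohomology} do the real work, translating the cycle-theoretic data on the stack (its Chow groups, its diagonal class) into morphisms of motives in a way compatible with Toen's equivalence $e:\mathcal M_k\simeq\mathcal M_k^{DM}$. A secondary subtlety is keeping track of Tate twists and the effective-versus-non-effective issue when invoking duality (Lemma \ref{dual} produces a non-effective dual), so one must phrase the comparison in a twist-insensitive way, e.g.\ by comparing $\mathrm{Hom}$-groups after suitable twisting rather than objects directly.
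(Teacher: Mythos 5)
Your proposal correctly identifies almost all of the paper's ingredients — Theorem \ref{Chow} (to get $M(F)$ as a summand of $M(W)$ for smooth projective $W$), Lemma \ref{dual} for Chow-side duality, the full faithfulness of $\omega$ from Appendix \ref{appendix-A}, Lemma \ref{motivic cohomology}, and the effective/non-effective twist subtlety — but the organization is genuinely different, and the route you describe as ``probably cleaner'' has a gap that the paper's proof is specifically designed to avoid.

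The paper never constructs an explicit comparison morphism $\iota\circ h(F)\to M(F)$ from the diagonal. Instead it argues in two steps. First it shows $\iota\circ h(F)\cong\underline{Hom}(M(F),\mathbb{Q}(d)[2d])$: both are effective Chow motives (the right side because $M(F)$ is a summand of $M(W)$ with $\dim W=d$, so $\underline{Hom}(M(W),\mathbb{Q}(d)[2d])\cong M(W)$ is effective Chow), and applying $\omega$ both sides give the presheaf $X\mapsto A^d(X\times F)$ on $\mathcal{V}_k$ — the left side by definition, the right side via Lemma \ref{motivic cohomology} and the comparison of motivic cohomology with Joshua's $A^*$-theory (\cite[Theorems 3.1(i), 5.3.10]{j2}). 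Full faithfulness of $\omega$ then forces the isomorphism. Second, it shows $\iota\circ h(F)\cong\underline{Hom}(\iota\circ h(F),\mathbb{Q}(d)[2d])$ by transporting the strong dual of Lemma \ref{dual} through Toen's tensor equivalence and the tensor embedding $\iota$ into the stable category $\mathbf{DM}(k,\mathbb{Q})$, then coming back by cancellation. Combining the two and using reflexivity of $M(F)$ (a summand of $M(W)$) yields $M(F)\cong\iota\circ h(F)$ without ever matching projectors. Your ``candidate morphism then verify'' route (i) is morally the first step, but the paper sidesteps the need to then check the morphism is an isomorphism degreewise on $U_\bullet$ or along a blow-up tower, which is where the real coherence headaches would live.

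The gap in your ``cleaner'' route is the claim that $h(F')$ is cut out of $h(X)$ by the same projector $\tfrac1m\,{}^tg\circ g$. On the Voevodsky side this is Lemma \ref{finite}. But $h(F')$ is defined through Toen's equivalence $\mathcal{M}_k\simeq\mathcal{M}_k^{DM}$, which is an abstract equivalence of categories, not a recipe identifying $h(F')$ as the image of a specific correspondence on $h(X)$. To establish that the projector realizing $h(F')$ inside $h(X)$ is the transpose-of-graph idempotent, you need some handle on how Toen's equivalence interacts with cycle-level operations, and Appendix \ref{appendix-A} by itself does not give you this: Theorem \ref{fully faithful} lets you recognize isomorphisms between effective Chow motives by comparing their presheaves on $\mathcal{V}_k$, but it does not directly tell you that the two projectors you have in hand are identified under $\iota$ until you have already computed both of their $\omega$-images — at which point you have essentially re-derived the paper's argument. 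The paper's approach is precisely engineered to avoid making this projector comparison, by comparing the whole objects via $\omega$ in one stroke and then cleaning up the twist with abstract duality.
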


\begin{proof}
We may assume that $F$ has pure dimension $d$.  By \ref{Chow} $M(F)$ 
is a direct factor of the motive of a smooth and projective variety $W$ such that $dim(W) =d$. 
By \cite[Example 20.11]{mvw},
$$\underline{Hom} (M(W), \mathbb{Q}(d)[2d]) \cong M(W)$$ 
is an effective Chow motive.
It follows that 
$\underline{Hom} (M(F), \mathbb{Q}(d)[2d])$ is also an effective Chow motive. 

We first show that $\iota \circ h(F) \cong \underline{Hom} (M(F), \mathbb{Q}(d)[2d])$. Let $\mathcal{V}_k$ be the category
of smooth and projective varieties over $k$.
For $M \in \mathbf{DM}^{\eff}(k, \mathbb{Q})$ denote $\omega_{M}$
the presheaf on $\mathcal{V}_k$ defined by 
$$X \in \mathcal{V}_k \mapsto  Hom_{\mathbf{DM}^{\eff}(k, \mathbb{Q})}(M(X), M).$$
Using \ref{fully faithful}, it is enough to construct 
an isomorphism of presheaves 
$$\omega_{\underline{Hom}(M(F), \mathbb{Q}(d)[2d])}\cong \omega_{\iota \circ h(F)}.$$
The right hand side is by definition the presheaf $A^{dim(F)} (- \times F)$. For $X \in \mathcal{V}_k$, we have 
$$\begin{array}{rcl}
\omega_{\underline{Hom}(M(F),\mathbb{Q}(d)[2d])}(X)
& = & \hom_{\mathbf{DM}^{\eff}(k,\mathbb{Q})}(M(X),
\underline{Hom}(M(F),\mathbb{Q}(d)[2d]))\\
& =& \hom_{\mathbf{DM}^{\eff}(k,\mathbb{Q})}(M(X\times F),\mathbb{Q}(d)[2d]))\\
& =& H_M^{2d} (X\times F ,d).
\end{array}$$ 
We conclude using
\cite[Theorem 3.1(i) and Theorem 5.3.10]{j2}.

To finish the proof, it remains to construct an isomorphism 
$\iota\circ h(F)\simeq \underline{Hom}(\iota\circ h(F),\mathbb{Q}(d)[2d])$. It suffices to do so in the stable triangulated category
of Voevodsky's motives $\mathbf{DM}(k,\mathbb{Q})$ in which 
$\mathbf{DM}^{\eff}(k,\mathbb{Q})$ embeds fully faithfully by 
Voevodsky's cancellation theorem.
(Recall that $\mathbf{DM}(k,\mathbb{Q})$ 
is defined as the homotopy category 
of $T=\mathbb{Q}_{tr}(\Aff^1/\Aff^1-0)$-spectra for 
the stable motivic model structure; 
for more details, see 
\cite[D\'efinition 2.5.27]{a1} in the special case where 
the valuation on $k$ is trivial.)
In $\mathbf{DM}(k,\mathbb{Q})$, 
we have an isomorphism
$$\underline{Hom}(\iota\circ h(F),\mathbb{Q}(d)[2d])
\simeq 
\underline{Hom}(\iota\circ h(F),\mathbb{Q}(0))\otimes \mathbb{Q}(d)[2d].$$
As the full embedding
$\mathcal{M}_k\to \mathbf{DM}(k,\mathbb{Q})$ and the 
equivalence $\mathcal{M}_k\simeq \mathcal{M}_k^{DM}$ are
tensorial, they preserve strong duals. From
Lemma \ref{dual}, it follows that 
$\underline{Hom}(\iota\circ h(F),\mathbb{Q}(0))$
is canonically isomorphic to 
$\iota(F,\Delta_F,-d)=\iota\circ h(F)\otimes \mathbb{Q}(-d)[-2d]$.
This gives the isomorphism 
$\underline{Hom}(\iota\circ h(F),\mathbb{Q}(d)[2d])
\simeq \iota\circ h(F)$ we want.
\end{proof}

\appendix

\section{}

\label{appendix-A}

As usual, we fix a base field $k$ of characteristic $0$. (Varieties will be always defined over $k$.) Recall that 
$\mathcal{M}^{\eff}_k$ is the category of effective Chow motives with rational coefficients. 
We will have to consider the following categories
of varieties.
\begin{enumerate}

\item $\mathcal{V}_k$: the category of smooth and 
projective varieties.

\item $\mathcal{V}'_k$: the category of projective varieties 
having at most global quotient singularities, i.e., 
those that can be written as a quotient of an object of $\mathcal{V}_k$ by a finite group.

\item $\mathcal{N}_k$: the category of projective normal varieties.

\item $\mathcal{P}_k$: the category of all projective varieties.

\end{enumerate}
We have the chain of inclusions
$$\mathcal{V}_k\subset \mathcal{V}_k'\subset \mathcal{N}_k\subset 
\mathcal{P}_k.$$

Given $N \in \mathcal{M}_k^{\eff}$ we define a functor $\omega_N : \mathcal{V}^{op}_k \to Vec_{\mathbb{Q}}$ by 
$$\omega_N(X)=Hom_{\mathcal{M}_k^{\eff}}(M(X),N), \; \text{for}\; X\in \mathcal{V}_k.$$ 
We thus have a functor 
$\omega:\mathcal{M}_k^{\eff} \to PSh(\mathcal{V}_k)$ given by 
$N \mapsto \omega_N$. 

\begin{theorem} 
\label{fully faithful}
The functor $\omega:\mathcal{M}^{\eff}_k \to PSh(\mathcal{V}_k)$ is fully faithful, i.e., for every $M, N \in \mathcal{M}^{\eff}_k$, the natural morphism 
\begin{equation}
\label{ff}
Hom_{\mathcal{M}_k^{\eff}}(M,N) \to Hom(\omega_M,\omega_N)
\end{equation} 
is bijective.
\end{theorem}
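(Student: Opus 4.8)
The plan is to reduce the statement to a concrete computation about correspondences, using the standard presentation of the category of effective Chow motives. Recall that every effective motive $M$ is a direct summand of $M(X)$ for some $X \in \mathcal{V}_k$, cut out by an idempotent correspondence $p \in \mathrm{Corr}^0(X,X)_{\mathbb{Q}} = A^{\dim X}(X\times X)$; write $M = (X,p)$ and similarly $N = (Y,q)$. Since $\omega$ sends $M(X)$ to the representable presheaf $h_X = \mathrm{Hom}_{\mathcal{M}_k^{\eff}}(M(-),M(X))$ (by the very definition of morphisms in $\mathcal{M}_k^{\eff}$, which are given by correspondences, and by Yoneda this is exactly $\mathrm{Corr}^0(-,X)_{\mathbb{Q}}$), the functor $\omega$ restricted to the full subcategory of motives of the form $M(X)$ is automatically fully faithful — that is precisely the Yoneda lemma on $\mathcal{V}_k$. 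So the only issue is that $M$ and $N$ are summands, and one must check that $\omega$ does not lose information when passing to a summand.

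The key step is the following: $\omega$ is additive and sends the idempotent $p$ on $M(X)$ to the idempotent $h_p$ on $h_X$, and $\omega_M = \omega_{(X,p)}$ is the image of $h_p$ in $\mathrm{PSh}(\mathcal{V}_k)$. Now $\mathrm{PSh}(\mathcal{V}_k)$ is an abelian (in particular idempotent-complete) category, so $\omega_M$ genuinely exists as $\mathrm{im}(h_p) = \ker(1-h_p)$, and there is a natural split exact sequence exhibiting $\omega_M$ as a summand of $h_X$. Using this, I compute
$$\mathrm{Hom}_{\mathrm{PSh}}(\omega_M,\omega_N) = q\circ \mathrm{Hom}_{\mathrm{PSh}}(h_X,h_Y)\circ p = q\circ \mathrm{Corr}^0(X,Y)_{\mathbb{Q}}\circ p = \mathrm{Hom}_{\mathcal{M}_k^{\eff}}(M,N),$$
where the middle equality is Yoneda for $\mathcal{V}_k$ applied to $h_X, h_Y$, and the outer equalities are the description of $\mathrm{Hom}$-groups between summands in any additive (resp. Karoubian) category, compatibly on both sides. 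Tracking through that the natural map \eqref{ff} is exactly this identification (not merely an abstract isomorphism) finishes the proof. This is really just the general principle that the Yoneda embedding of an additive category $\mathcal{A}$ into presheaves extends to a fully faithful embedding of the Karoubian envelope $\widehat{\mathcal{A}}$, applied to $\mathcal{A} = $ the category with objects $\mathcal{V}_k$ and morphisms the degree-zero correspondences, whose Karoubian envelope is $\mathcal{M}_k^{\eff}$.

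The main obstacle — and the reason this is not completely trivial — is verifying that the map \eqref{ff} constructed functorially by the paper coincides with the tautological identification above, i.e., checking the naturality/compatibility of the idempotent-splitting in $\mathrm{PSh}(\mathcal{V}_k)$ with the one defining $M$ and $N$ as Chow motives. One must be a little careful that the presheaf $\omega_N$ is defined using $\mathrm{Hom}_{\mathcal{M}_k^{\eff}}$ and not some a priori different thing, and that restricting along $\mathcal{V}_k \hookrightarrow \mathcal{M}_k^{\eff}$ is what recovers the representable picture; since $\mathcal{V}_k$ generates $\mathcal{M}_k^{\eff}$ as a Karoubian category this restriction is conservative enough. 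I do not expect to need the auxiliary categories $\mathcal{V}'_k \subset \mathcal{N}_k \subset \mathcal{P}_k$ for this particular statement — those presumably enter the proof of the technical result in Appendix \ref{appendix-B} that underlies resolution/alteration arguments elsewhere — so the present proof stays entirely within $\mathcal{V}_k$ and the formal Yoneda-for-additive-categories argument.
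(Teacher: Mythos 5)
Your proof has a genuine gap at the step you label ``Yoneda for $\mathcal{V}_k$ applied to $h_X,h_Y$.'' The presheaf $\omega_{M(X)}$ on $\mathcal{V}_k$ sends $Z$ to $\mathrm{Hom}_{\mathcal{M}_k^{\eff}}(M(Z),M(X)) = A^{\dim X}(Z\times X)$, which is \emph{not} the representable presheaf $\mathrm{Hom}_{\mathcal{V}_k}(-,X)$; it is the presheaf of degree-$0$ correspondences. A morphism in $PSh(\mathcal{V}_k)$ from $\omega_{M(X)}$ to $\omega_{M(Y)}$ is required to be natural only with respect to honest morphisms of varieties (i.e.\ pullbacks along graphs), not with respect to arbitrary finite correspondences, so there is no a priori reason it should equal composition with some fixed $\gamma\in A^{\dim Y}(X\times Y)$. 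Your argument would be correct if the target of $\omega$ were presheaves on the category whose morphisms are correspondences, since there the Karoubian-envelope-plus-Yoneda mechanism applies; but the target is $PSh(\mathcal{V}_k)$, and fullness is precisely the non-formal assertion that naturality for morphisms already forces compatibility with all correspondences.

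This is exactly why the paper's proof has to do real work, and why it needs the auxiliary categories $\mathcal{V}'_k\subset \mathcal{N}_k\subset\mathcal{P}_k$ that you set aside. The strategy in the paper is: extend $\omega_M$ to a $pcdh$-sheaf $\omega''_M$ on all projective varieties (Lemma~\ref{lem:extension-pcdh}, using the blow-up formula to check the $pcdh$-sheaf axiom via Corollary~\ref{subspace topology}); show via varieties with quotient singularities (Lemmas~\ref{cdh}, \ref{extension}, \ref{fh}) that the restriction of $\omega''_M$ to normal projective varieties is an $fh$-sheaf. By the Suslin--Voevodsky result cited as \cite[Prop 2.2.6]{a1} (see also \cite{Sing}), an $fh$-sheaf carries canonical transfers, and any morphism of $fh$-sheaves commutes with them. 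Applied to the extension $f''$ of a given natural transformation $f$, this forces $f_Z(c)=f_X(id_{M(X)})\circ c$ for every finite correspondence $c$, which is fullness. The compatibility with transfers is the content you would need; it does not come for free from Yoneda on $\mathcal{V}_k$, and your proof as written does not supply it.
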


\begin{remark}
The statement of the theorem appears without proof in \cite[2.2]{ajs} and is also mentioned in \cite[p.~12]{t}. 
\end{remark}

\begin{lemma}
\label{lem:omega-faithful}
The functor $\omega$ is faithful.
\end{lemma}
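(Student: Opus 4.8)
The plan is to show that the functor $\omega$ is faithful by checking that the morphism \eqref{ff} is injective when $M$ and $N$ range over the objects of $\mathcal{M}_k^{\eff}$. Since every effective Chow motive is a direct summand of a motive of the form $M(X)(-n)$ for $X \in \mathcal{V}_k$ and $n \geq 0$ — in fact, because $\mathcal{M}_k^{\eff}$ is idempotent-complete and its generators are the $M(X)$ for $X$ smooth projective — it suffices to treat the case $M = M(X)$ and $N = M(Y)$ with $X, Y \in \mathcal{V}_k$; a morphism between direct summands vanishes iff the induced morphism between the ambient motives (pre- and post-composed with the idempotents) does. So first I would reduce to $M = M(X)$, $N = M(Y)$ with $X, Y$ smooth projective.

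Next I would unwind what $\omega$ does on such a Hom-group. By definition $\omega_{M(Y)}(X) = \mathrm{Hom}_{\mathcal{M}_k^{\eff}}(M(X), M(Y))$, and a morphism of presheaves $\omega_{M(X)} \to \omega_{M(Y)}$ is in particular a map of sets $\omega_{M(X)}(X) \to \omega_{M(Y)}(X)$ natural in the object being plugged in. The image of a correspondence $\alpha \colon M(X) \to M(Y)$ under \eqref{ff} is the natural transformation $\omega_\alpha$ whose component at $T \in \mathcal{V}_k$ sends $\beta \in \mathrm{Hom}(M(T), M(X))$ to $\alpha \circ \beta$. Therefore the key observation is that $\omega_\alpha$ is determined by its value at $T = X$ on the single element $\mathrm{id}_{M(X)}$, which it sends to $\alpha \circ \mathrm{id}_{M(X)} = \alpha$. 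Hence $\omega_\alpha = 0$ forces $\alpha = 0$, which is exactly injectivity of \eqref{ff} and therefore faithfulness of $\omega$.

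More precisely, the argument is a Yoneda-type argument: the functor $X \mapsto \omega_{M(X)}$ is (a restriction to $\mathcal{V}_k$ of) the Yoneda embedding of $\mathcal{M}_k^{\eff}$ composed with the projection sending $M(X)$ to the representable presheaf it corepresents, and the $M(X)$ for $X \in \mathcal{V}_k$ generate $\mathcal{M}_k^{\eff}$, so evaluation at identities recovers morphisms. Concretely: given $\alpha \in \mathrm{Hom}_{\mathcal{M}_k^{\eff}}(M(X), M(Y))$ with $\omega_\alpha = 0$, evaluate the component $(\omega_\alpha)_X \colon \omega_{M(X)}(X) \to \omega_{M(Y)}(X)$ at $\mathrm{id}_{M(X)} \in \omega_{M(X)}(X)$ to get $\alpha = (\omega_\alpha)_X(\mathrm{id}_{M(X)}) = 0$. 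For a general $M$, a retract of some $M(X)$ via $M \xrightarrow{\iota} M(X) \xrightarrow{\pi} M$ with $\pi\iota = \mathrm{id}_M$, and a general $N$, the naturality of $\omega$ together with functoriality lets us transport the vanishing: if $\omega_f = 0$ for $f \colon M \to N$ then $\omega_{f \circ \pi} = \omega_f \circ \omega_\pi = 0$ on the representable piece, hence $f \circ \pi = 0$ by the case just treated (applied after embedding $N$ into some $M(Y)$ as well), hence $f = f \circ \pi \circ \iota = 0$.

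**The main obstacle** is essentially bookkeeping rather than a genuine difficulty: one must be careful that $\omega$ is defined only on the restriction to $\mathcal{V}_k$ (smooth projective varieties), so the naturality one is allowed to use is naturality against morphisms in $\mathcal{V}_k$ together with the correspondences among the $M(X)$; the point that makes it work is precisely that these generate $\mathcal{M}_k^{\eff}$ as an idempotent-complete additive category, so the "test objects" $M(X)$ are enough to detect vanishing of morphisms. (The genuinely hard part of Theorem \ref{fully faithful} — \emph{fullness}, i.e. surjectivity of \eqref{ff} — is a separate matter requiring the categories $\mathcal{V}_k', \mathcal{N}_k, \mathcal{P}_k$ and the technical result of Appendix \ref{appendix-B}, and is not needed here.)
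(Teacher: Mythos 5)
Your proof is correct and takes essentially the same approach as the paper: reduce to $M = M(X)$, $N = M(Y)$ with $X, Y$ smooth projective, then observe that evaluation at $\mathrm{id}_X$ gives a retraction of \eqref{ff}, which is just the Yoneda argument. The paper is terser (it states the reduction and the retraction without elaboration) but the underlying idea is identical.
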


\begin{proof}
To show that the map \eqref{ff} is injective, we may assume that 
$M=M(X)$ and $N=M(Y)$ for $X,\,Y \in \mathcal{V}_k$. 
In this case, 
\eqref{ff}
has a retraction given by
$\alpha \in  Hom (\omega_M, \omega_N) \mapsto \alpha(id_X)$. Hence 
it is injective.
\end{proof}

\begin{definition}
$\,$

\begin{enumerate}

\item 
The $pcdh$ topology on $\mathcal{P}_k$ is the Grothendieck topology  generated by the covering families of the form 
$(X'\xrightarrow{p_{X'}} X, Z \xrightarrow{p_{Z}} X)$ such that $p_{X'}$ is a proper morphism, $p_Z$ is a closed embedding and
$p_{X'}^{-1} (X - p_Z(Z)) \to X - p_Z(Z)$ is an isomorphism.
To avoid problems, we also add the empty family to the covers of the empty scheme.

\item
The $fh$ topology on $\mathcal{N}_k$ is the topology associated to the pretopology formed by the finite families $(f_i : Y_i \to X)_{i \in I}$ such that $\cup_i f_i : \coprod_{i \in I} Y_i \to X$ is finite and surjective.
\end{enumerate}
\end{definition}

\begin{lemma} 
\label{cdh}
Let $M \in \mathcal{M}_k^{\eff}$. The presheaf $\omega_M$ can be extended to a presheaf $\omega'_M$ on 
$\mathcal{V}'_k$ such that for $X = X'/G$ with $X' \in \mathcal{V}_k$ and $G$ a finite group, we have $\omega'_M (X)=\omega_M(X')^G$.
\end{lemma}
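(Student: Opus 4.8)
Lemma \ref{cdh}: for $M\in\mathcal{M}_k^{\eff}$, the presheaf $\omega_M$ on $\mathcal{V}_k$ extends to a presheaf $\omega'_M$ on $\mathcal{V}'_k$ with $\omega'_M(X'/G)=\omega_M(X')^G$ whenever $X'\in\mathcal{V}_k$ and $G$ a finite group acts on $X'$.

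The plan is first to check that the formula is well-defined on objects, and then to make it functorial. For well-definedness one must show that if $X'/G\cong X''/H$ as varieties with $X',X''\in\mathcal{V}_k$, then $\omega_M(X')^G\cong\omega_M(X'')^H$ canonically. The natural way to do this is to pass to a common refinement: form $Y:=X'\times_{X'/G}X''$, resolve it (Hironaka) to get $\widetilde Y\in\mathcal{V}_k$ dominating both $X'$ and $X''$, with compatible finite-group actions so that $\widetilde Y/(G\times H)$ (or the appropriate quotient group) is again $X'/G=X''/H$. One then shows $\omega_M(X')^G=\omega_M(\widetilde Y)^{G\times H}=\omega_M(X'')^H$; the key input is that for a finite surjective morphism $Z\to W$ of smooth projective varieties the transpose-of-graph correspondence exhibits $M(W)$ as a direct summand of $M(Z)$ (the argument already used in Lemma \ref{finite} and \ref{motive of quotient stacks}), which rationally identifies $\hom_{\mathcal{M}_k^{\eff}}(M(Z),M)^{\text{invariants}}$ with $\hom_{\mathcal{M}_k^{\eff}}(M(W),M)$. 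Concretely, for a free quotient the statement $\omega_M(X'/G)=\omega_M(X')^G$ is literally $M(X')_G\cong M(X'/G)$ in $\mathcal{M}_k^{\eff}$, exactly as in the proof of Lemma \ref{motive of quotient stacks}; the general (non-free) case reduces to this by resolution.

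Next I would produce the functoriality. A morphism $f:X'/G\to X''/H$ in $\mathcal{V}'_k$ need not lift to a morphism $X'\to X''$, but after replacing $X'$ by a resolution $\widetilde{X'}$ of $\widetilde{X'}:=$ (a smooth model of) $X'\times_{X''/H}X''$ — which carries an action of $G$ times a subgroup of $H$ — one gets an honest $G$-equivariant correspondence, or better an actual equivariant morphism $\widetilde{X'}\to X''$ refining $f$. Pulling back along this and taking invariants gives the map $\omega'_M(X'')=\omega_M(X'')^H\to\omega_M(\widetilde{X'})^{(\cdots)}=\omega_M(X')^G=\omega'_M(X')$. Compatibility with composition and identities is checked by the same refinement trick, using that any two resolutions are dominated by a third; this is where one must be a little careful to see everything is independent of choices, but it follows formally from the direct-summand statement above, since all the identifications $\omega_M(\text{resolution})^{\text{group}}\cong\omega_M(X')^G$ are canonical.

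The main obstacle I expect is the bookkeeping around equivariant resolution of singularities: one needs that $X'\times_{X'/G}X''$, or the fiber product presenting a morphism, can be resolved $G$- (resp.\ $(G\times H)$-) equivariantly, compatibly, and functorially enough that composition of morphisms in $\mathcal{V}'_k$ is respected. Hironaka's resolution in characteristic $0$ is functorial for smooth morphisms and can be taken equivariantly for finite group actions (as used in Theorem \ref{resolution}), so this goes through, but making the independence-of-resolution argument clean is the real content. Everything else — the invariants computation, the direct-summand property of finite surjections, the reduction of the non-free case to the free case — is already available from the earlier parts of the paper, in particular from the proof of Lemma \ref{motive of quotient stacks}.
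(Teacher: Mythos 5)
Your approach is genuinely different from the paper's, and it is worth comparing the two. The paper's proof is very short: by Fulton's \cite[Example 8.3.12]{ful}, Chow groups of varieties with finite quotient singularities carry a rational intersection product, so one can build a category $\mathcal{M}'^{\eff}_k$ of effective Chow motives whose objects come from $\mathcal{V}'_k$; the inclusion $\mathcal{V}_k\hookrightarrow\mathcal{V}'_k$ induces a functor $\phi:\mathcal{M}^{\eff}_k\to\mathcal{M}'^{\eff}_k$ which is an \emph{equivalence} by \cite[Proposition 1.2]{ra}. One then simply puts $\omega'_M(X):=Hom_{\mathcal{M}'^{\eff}_k}(M(X),\phi(M))$; the presheaf structure and well-definedness are automatic because they are inherited from the category $\mathcal{M}'^{\eff}_k$, and the formula $\omega'_M(X'/G)=\omega_M(X')^G$ drops out because $M(X'/G)$ is the image of the idempotent $\frac{1}{|G|}\sum_{g\in G}g$ on $M(X')$ in $\mathcal{M}'^{\eff}_k$. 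All the bookkeeping you propose doing by hand is delegated to the two citations.

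Your plan, by contrast, tries to define $\omega'_M(X'/G):=\omega_M(X')^G$ directly and then verify well-definedness and functoriality via equivariant resolution and common refinements. This could in principle be made to work, but as sketched it has real gaps. First, for the well-definedness step, the fiber product $Y=X'\times_{X'/G}X''$ is badly behaved when the actions are not free: it can be reducible, non-reduced, and the induced actions on a resolution $\widetilde Y$ do not obviously recover $X'/G$ as $\widetilde Y/(G\times H)$ -- the quotient of the resolution may be a nontrivial modification of $X'/G$ rather than $X'/G$ itself. Second, the step ``the general (non-free) case reduces to this [free case] by resolution'' does not work as stated: resolving $X'/G$ does not produce a presentation as a free quotient, and $X'/G$ is typically not smooth, so $M(X'/G)$ does not exist in $\mathcal{M}^{\eff}_k$ for you to compare with $M(X')_G$. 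Third, for functoriality, a morphism $X'/G\to X''/H$ in $\mathcal{V}'_k$ need not lift to an equivariant morphism of smooth models even after blowing up; one would have to work with equivariant \emph{correspondences}, at which point one is essentially reconstructing the intersection product on $\mathcal{V}'_k$ that Fulton already provides. You correctly flag the independence-of-choices issue as ``the real content,'' but I would emphasize that this content is precisely what \cite[Example 8.3.12]{ful} and \cite[Proposition 1.2]{ra} encapsulate, and redoing it from scratch is a substantial undertaking rather than a matter of careful bookkeeping. The paper's route is not just shorter; it is the one that cleanly separates the geometric input (existence of a good intersection theory on $\mathcal{V}'_k$) from the formal consequence you are after.
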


\begin{proof}
By \cite[Example 8.3.12]{ful}, we can define refined intersection class with rational coefficients which can be used to define a
category of effective Chow motives $\mathcal{M}'^{\eff}_k$. 
Moreover, the canonical functor 
$\phi : \mathcal{M}^{\eff}_k \to \mathcal{M}'^{\eff}_k$,
induced by the inclusion $\mathcal{V}_k \to \mathcal{V}'_k$,
is an equivalence of categories 
(cf.~\cite[Proposition 1.2]{ra}).
For $X\in \mathcal{V}'_k$, we set 
$$\omega'_M(X) = Hom_{\mathcal{M}'^{\eff}_k}(M(X),\phi(M)).$$
In this way we get a presheaf $\omega'_M$ on $\mathcal{V}'_k$
which extends the presheaf $\omega_M$. Moreover,
the identification $\omega'_M(X'/G)=\omega_M(X')^G$
is clear.
\end{proof}

\begin{lemma}
\label{lem:extension-pcdh}
Let $M\in \mathcal{M}^{\eff}_k$.
The presheaf $\omega_M$ can be uniquely extended to a 
$pcdh$-sheaf $\omega''_M$ on $\mathcal{P}_k$.
\end{lemma}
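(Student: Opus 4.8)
The plan is to apply the extension criterion of Guill\'en and Navarro Aznar for contravariant functors defined on smooth projective varieties. In the case of a $\mathbb{Q}$-linear abelian target this criterion says that a functor $F\colon \mathcal{V}_k^{op}\to Vec_{\mathbb{Q}}$ which satisfies $F(\emptyset)=0$, carries finite disjoint unions to products, and sends every blow-up square --- attached to a blow-up $\widetilde{X}=Bl_Z X$ with $X\in\mathcal{V}_k$, centre $Z\in\mathcal{V}_k$ and exceptional divisor $E$ --- to a short exact square (i.e.\ the complex $0\to F(X)\to F(\widetilde{X})\oplus F(Z)\to F(E)\to 0$ is exact), extends uniquely to a functor $F'$ on $\mathcal{P}_k$ enjoying the analogous properties for all abstract blow-up squares. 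Since the $pcdh$-topology is generated by the cd-structure whose distinguished squares are precisely the abstract blow-up squares, and that cd-structure is complete, regular and bounded, Voevodsky's cd-structure criterion says that a presheaf of $\mathbb{Q}$-vector spaces on $\mathcal{P}_k$ is a $pcdh$-sheaf if and only if it vanishes on $\emptyset$ and sends every such square to a pullback square. Hence, once the criterion is applied to $F=\omega_M$, the resulting functor $F'=\omega''_M$ is automatically a $pcdh$-sheaf, and its uniqueness is part of the statement; alternatively uniqueness follows by Noetherian induction on dimension, since any resolution $\widetilde X\to X$ which is an isomorphism over the regular locus, with singular locus $Z$ of dimension $<\dim X$, yields a $pcdh$-cover $(\widetilde X\to X,\,Z\hookrightarrow X)$ and therefore forces $G(X)\cong G(\widetilde X)\times_{G(\widetilde X\times_X Z)}G(Z)$ for every $pcdh$-sheaf $G$ extending $\omega_M$.

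It remains to verify the hypotheses of the criterion for $F=\omega_M$; everything is immediate except the blow-up square condition. Additivity holds because $M(X\sqcup Y)=M(X)\oplus M(Y)$ and $M(\emptyset)=0$ in $\mathcal{M}_k^{\eff}$, so $Hom_{\mathcal{M}_k^{\eff}}(-,M)$ turns coproducts into products. For a blow-up square with $Z$ of pure codimension $c$, I will invoke the classical blow-up and projective bundle formulas in $\mathcal{M}_k^{\eff}$ (cf.\ \cite{mvw}), which provide decompositions $M(\widetilde X)\cong M(X)\oplus\bigoplus_{i=1}^{c-1}M(Z)(i)$ and $M(E)\cong\bigoplus_{i=0}^{c-1}M(Z)(i)$, compatible with the maps $M(E)\to M(\widetilde X)$ and $M(E)\to M(Z)\to M(X)$; applying $Hom_{\mathcal{M}_k^{\eff}}(-,M)$ and matching the summands $Hom(M(Z)(i),M)$ on the two sides then shows that the associated square of $\mathbb{Q}$-vector spaces is short exact, as needed. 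Finally I will remark that $\omega''_M$ restricts to $\omega_M$ on $\mathcal{V}_k$ and, by uniqueness, to the presheaf $\omega'_M$ of Lemma \ref{cdh} on $\mathcal{V}'_k$.

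The main obstacle is this verification of the blow-up square condition: one must turn the direct-sum decompositions coming from the motivic blow-up and projective bundle formulas into the Mayer--Vietoris short exact sequence, keeping careful track of the maps induced by the projection $\pi\colon\widetilde X\to X$, the closed immersion $Z\hookrightarrow X$ and the projective bundle projection $E\to Z$, together with the compatibilities among them. The remaining ingredients --- the cd-structure formalism, additivity, and the dimension induction proving uniqueness --- are routine.
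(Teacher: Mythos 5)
Your proposal is correct, but it takes a genuinely different route from the paper's. The paper's proof is a two-liner that leans entirely on the machinery it sets up in Appendix B: Corollary \ref{subspace topology} is proved via the comparison lemma \cite[Th\'eor\`eme III.4.1]{sga4} applied to the inclusion $\mathcal{V}_k\hookrightarrow\mathcal{P}_k$ (every projective variety is $pcdh$-covered by smooth ones via resolution of singularities), and it characterizes $pcdh$-sheaves on $\mathcal{V}_k$ precisely by the kernel condition on smooth blow-up squares. The paper then checks this condition for $\omega_M$ by citing the blow-up formula for Chow groups, and the equivalence of sheaf categories in \ref{subspace topology} gives the unique extension for free. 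You instead invoke the Guill\'en--Navarro Aznar extension criterion together with Voevodsky's cd-structure formalism (completeness, regularity, boundedness of the proper cd-structure on $\mathcal{P}_k$) to both produce the extension and identify $pcdh$-sheaves with presheaves sending abstract blow-up squares to pullbacks. Both toolkits encode the same underlying induction-on-dimension via resolution; the paper's choice is more self-contained (it avoids citing GNA and the cd-structure theory since the needed comparison is proved directly in Appendix B), whereas yours plugs into standard off-the-shelf theorems and so is shorter if one is willing to cite them. Your verification of the blow-up square condition --- deducing it from the motivic blow-up and projective bundle decompositions after applying $Hom_{\mathcal{M}_k^{\eff}}(-,M)$ and tracking the maps --- is equivalent to the blow-up formula for Chow groups that the paper cites, just phrased on the other side of the adjunction; you correctly flag the summand-matching as the point that needs care, and it does go through (it is precisely the content of Manin's blow-up formula). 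One minor remark: the cd-structure criterion only requires the square to become a pullback (left exactness), not the full short exactness that GNA gives; you get the stronger conclusion, which is harmless.
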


\begin{proof}
From \ref{subspace topology}(1) and the blow-up formula for Chow groups we deduce that 
$\omega_M$ is a $pcdh$-sheaf on $\mathcal{V}_k$. 
The result now follows from the first claim in
\ref{subspace topology}.
\end{proof}

\begin{lemma}
\label{extension}
Let $M\in \mathcal{M}_k^{\eff}$. We have $\omega''_M|_{\mathcal{V}'_k} \cong \omega'_M$. 
\end{lemma}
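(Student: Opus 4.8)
The goal is to compare two extensions of $\omega_M$: the $pcdh$-sheaf $\omega''_M$ on $\mathcal{P}_k$ restricted to $\mathcal{V}'_k$, and the explicit extension $\omega'_M$ obtained from the equivalence $\phi : \mathcal{M}_k^{\eff}\xrightarrow{\sim}\mathcal{M}'^{\eff}_k$. Both agree on $\mathcal{V}_k$ by construction, so the plan is to show that $\omega'_M$ is already a $pcdh$-sheaf on $\mathcal{V}'_k$ and then invoke the uniqueness part of Lemma~\ref{lem:extension-pcdh} (or rather, the uniqueness of $pcdh$-sheafification on $\mathcal{P}_k$ from \ref{subspace topology}). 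Once $\omega'_M$ is known to satisfy the $pcdh$-sheaf condition on $\mathcal{V}'_k$, its $pcdh$-sheafification on $\mathcal{P}_k$ must coincide with $\omega''_M$, and hence the two restrictions to $\mathcal{V}'_k$ agree.

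**Key steps.**
First I would observe that every object of $\mathcal{V}'_k$ is a quotient $X'/G$ with $X'\in\mathcal{V}_k$, and that by Lemma~\ref{cdh} we have $\omega'_M(X'/G)=\omega_M(X')^G$. The essential input is that the category $\mathcal{M}'^{\eff}_k$ built from global-quotient varieties with refined intersection products has a blow-up formula and a proper-descent property: for a $pcdh$-cover $(X'\to X,\ Z\hookrightarrow X)$ inside $\mathcal{V}'_k$, the corresponding diagram of Chow motives in $\mathcal{M}'^{\eff}_k$ is homotopy cocartesian (this follows from the classical blow-up and localization sequences for Chow groups applied equivariantly on an atlas, exactly as in the proof of Lemma~\ref{cdh}). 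Applying $\mathrm{Hom}_{\mathcal{M}'^{\eff}_k}(-,\phi(M))$ turns this into the Mayer--Vietoris/sheaf exact sequence verifying that $\omega'_M$ is a $pcdh$-sheaf on $\mathcal{V}'_k$. Then I would cite the uniqueness clause of \ref{subspace topology}: the $pcdh$-sheaf on $\mathcal{P}_k$ extending $\omega_M$ is unique, and since $\omega'_M$ is a $pcdh$-sheaf on the subcategory $\mathcal{V}'_k$ agreeing with $\omega_M$ on $\mathcal{V}_k$, its further $pcdh$-extension to $\mathcal{P}_k$ equals $\omega''_M$; restricting back gives $\omega''_M|_{\mathcal{V}'_k}\cong\omega'_M$.

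**Main obstacle.**
The delicate point is establishing that $\omega'_M$ genuinely satisfies the $pcdh$-sheaf axiom on all of $\mathcal{V}'_k$, not merely on $\mathcal{V}_k$. A $pcdh$-cover in $\mathcal{V}'_k$ involves proper morphisms between global-quotient varieties and closed embeddings of such; one must check that the blow-up and localization exact sequences for rational Chow groups (Fulton, \cite[Example 8.3.12]{ful}) are compatible with the morphisms $\phi$ and with passage to $G$-invariants, i.e.\ that taking $G$-invariants of an equivariant exact sequence of $\mathbb{Q}$-vector spaces remains exact—which it does, since $G$ is finite and we work rationally. Care is also needed because a $pcdh$-cover of $X'/G$ need not lift to a $G$-equivariant $pcdh$-cover of $X'$ on the nose; one resolves this by passing to a common $G$-equivariant refinement (e.g.\ pulling the closed subset $Z$ back to $X'$ and equivariantly blowing up), using that the resulting diagrams of motives are functorial. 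Modulo these bookkeeping issues the argument is formal, so I expect the proof to be short, with the one-line invocation of the uniqueness of $pcdh$-extension doing most of the work.
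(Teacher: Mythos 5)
Your proposal follows essentially the same route as the paper: reduce to showing that $\omega'_M$ is a $pcdh$-sheaf on $\mathcal{V}'_k$, then invoke the uniqueness of $pcdh$-extension from \ref{subspace topology} (resp.\ \ref{lem:extension-pcdh}) to conclude that the extensions to $\mathcal{P}_k$ agree. The verification of the $pcdh$-sheaf condition is done exactly as you describe: via the identification $\omega'_M(X'/G)=\omega_M(X')^G$ from \ref{cdh}, the blow-up formula for Chow groups of smooth projective varieties, and exactness of $(-)^G$ on $\mathbb{Q}[G]$-modules. The concern you flag about lifting a $pcdh$-cover of $X'/G$ to a $G$-equivariant one is handled in the paper by Corollary~\ref{subspace topology}(2), which characterizes $pcdh$-sheaves on $\mathcal{V}'_k$ directly in terms of equivariant blow-up squares $(\tilde{X}/G, Z/G, E/G)$, so only the equivariant case needs checking; your ``common $G$-equivariant refinement'' is what goes into proving that criterion. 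One small simplification: no localization sequence is required, only the blow-up exact sequence, since the $pcdh$-sheaf axiom is purely about abstract blow-up squares.
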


\begin{proof}
We will show that $\omega'_M$ extends uniquely to a 
$pcdh$-sheaf on $\mathcal{P}_k$. 
Since $\omega'_M|_{\mathcal{V}_k} \cong \omega_M$, 
\ref{lem:extension-pcdh}
shows that this extension is given
$\omega''_M$. In particular, we have
$\omega''_M|_{\mathcal{V}'_k} \cong \omega'_M$.

From the first statement in \ref{subspace topology}, it suffices to 
show that $\omega'_M$ is a $pcdh$-sheaf on 
$\mathcal{V}'_k$. 
To do so, we use \ref{subspace topology}(2).
Let $X\in \mathcal{V}_k$ and $G$ a finite group acting on $X$. 
Let $Z\subset X$ be a smooth closed subscheme globally invariant under 
$G$. Let $\tilde{X}$ be the blow-up of $X$ along $Z$ and
let $E$ be the exceptional divisor.
We need to show that 
$$\omega'_M(X/G)\simeq {\rm ker}\{\omega'_M(\tilde{X}/G)\oplus
\omega'_M(Z/G)\to \omega'_M(E/G)\}.$$
This is equivalent to 
$$\omega_M(X)^G\simeq {\rm ker}\{\omega_M(\tilde{X})^G\oplus
\omega_M(Z)^G\to \omega_M(E)^G\}.$$
This is true by the blow-up formula for Chow groups and 
the exactness of the functor $(-)^G$ on 
$\mathbb{Q}[G]$-modules.
\end{proof}

\begin{lemma}
\label{fh}
Let $M\in \mathcal{M}^{\eff}_k$. Then $\omega''_M|_{\mathcal{N}_k}$ is an $fh$-sheaf.
\end{lemma}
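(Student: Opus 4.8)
The plan is to reduce the $fh$-sheaf condition to a statement about Galois covers and then transport the behaviour of $\omega''_M$ on $\mathcal{V}'_k$ recorded in Lemmas \ref{cdh} and \ref{extension} along $pcdh$-descent, by a Noetherian induction on the dimension.

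\emph{Reduction to Galois covers.} Arguing as in Appendix \ref{appendix-B}, it is enough to verify descent for a family of $fh$-covers generating the topology. Since we work with $\mathbb{Q}$-coefficients and $\mathbb{Q}[G]$ is semisimple for every finite group $G$, the $fh$-topology on $\mathcal{N}_k$ is generated by the covers $f:Y\to X$ with $X$ integral normal projective and $Y$ the normalisation of $X$ in a finite (hence separable) Galois extension $L/k(X)$ with group $G$; for such a cover the normalisation of $Y\times_X Y$ decomposes into the graphs of the elements of $G$, so the $fh$-descent condition for $f$ becomes the assertion that $\omega''_M(f):\omega''_M(X)\to \omega''_M(Y)^G$ is an isomorphism. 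I would prove this by induction on $\dim X$, the case $\dim X=0$ being trivial.

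\emph{The induction step.} By functoriality of resolution of singularities in characteristic $0$, there is a $G$-equivariant resolution $\rho:\tilde Y\to Y$ with $\tilde Y\in\mathcal{V}_k$ which is an isomorphism over the regular locus of $Y$. Set $\tilde X:=\tilde Y/G$; then $\tilde X\in\mathcal{V}'_k$, and since $X=Y/G$ and $\rho$ is proper, birational and $G$-equivariant, the induced morphism $\pi:\tilde X\to X$ is proper and birational. Choose a closed subset $Z\subsetneq X$ such that $f$ is étale over $X\setminus Z$, $\pi$ is an isomorphism over $X\setminus Z$ and $\rho$ is an isomorphism over $Y\setminus f^{-1}(Z)$, and set $E_X:=\pi^{-1}(Z)$, $E_Y:=\rho^{-1}(f^{-1}(Z))$. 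Then $\{\pi:\tilde X\to X,\ Z\hookrightarrow X\}$ and $\{\rho:\tilde Y\to Y,\ f^{-1}(Z)\hookrightarrow Y\}$ are $pcdh$-covers, the second $G$-equivariant, with $\tilde Y/G=\tilde X$, and $f^{-1}(Z)/G=Z$, $E_Y/G=E_X$ after collapsing finite universal homeomorphisms (which $\omega''_M$ inverts, being a $pcdh$-sheaf). Since $\omega''_M$ is a $pcdh$-sheaf, $\omega''_M(X)$ is the fibre product $\omega''_M(\tilde X)\times_{\omega''_M(E_X)}\omega''_M(Z)$, and applying the left exact functor $(-)^G$ to the $pcdh$-descent square for the $G$-equivariant cover of $Y$ identifies $\omega''_M(Y)^G$ with $\omega''_M(\tilde Y)^G\times_{\omega''_M(E_Y)^G}\omega''_M(f^{-1}(Z))^G$. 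The pullback maps give a morphism of these two cartesian diagrams whose corner maps are isomorphisms: $\omega''_M(\tilde X)\to\omega''_M(\tilde Y)^G$ is exactly Lemma \ref{cdh} together with Lemma \ref{extension} for $\tilde X=\tilde Y/G$ with $\tilde Y\in\mathcal{V}_k$, whereas $\omega''_M(Z)\to\omega''_M(f^{-1}(Z))^G$ and $\omega''_M(E_X)\to\omega''_M(E_Y)^G$ follow from the induction hypothesis in dimension $<\dim X$. Hence $\omega''_M(X)\xrightarrow{\sim}\omega''_M(Y)^G$, closing the induction.

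The step I expect to be the main obstacle is this last one. The varieties $Z$, $f^{-1}(Z)$, $E_X$, $E_Y$ are in general not normal, so to invoke the induction hypothesis one must first descend, within the same dimension range, from the $fh$-statement on $\mathcal{N}_k$ to the corresponding statement for finite $G$-invariant covers in $\mathcal{P}_k$, which requires passing to normalisations and running a secondary, dimension-decreasing induction using closed Mayer--Vietoris ($pcdh$) descent, all while keeping track of the $G$-actions, the universal-homeomorphism identifications, and a choice of $Z$ compatible across these steps. Everything else (the reduction to Galois covers and the transfer of information through $pcdh$-descent) is formal once Lemmas \ref{cdh} and \ref{extension} and the material of Appendix \ref{appendix-B} are in place.
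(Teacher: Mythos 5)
Your approach is essentially the same as the paper's: reduce to the claim $\omega''_M(Y/G)\cong\omega''_M(Y)^G$ for a finite group $G$ acting on $Y$, prove it by induction on $\dim Y$ using an equivariant resolution of singularities, and close the induction by combining $pcdh$-descent (applied to the blow-up square and its $G$-quotient), exactness of $(-)^G$ on $\mathbb{Q}[G]$-modules, and the known case of a smooth $Y$ from Lemmas \ref{cdh} and \ref{extension}. Your opening paragraph reducing the $fh$-sheaf condition to descent along Galois covers is a useful justification that the paper leaves implicit.

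Where you go astray is in the diagnosis of the "main obstacle.'' You are right that $Z$, $f^{-1}(Z)$, $E_X$, $E_Y$ need not be normal, so the induction hypothesis cannot be the $fh$-sheaf statement on $\mathcal{N}_k$. But the fix is not a secondary induction through normalisations: simply state the inductive claim for \emph{all reduced $Y\in\mathcal{P}_k$ equipped with a $G$-action}, i.e. prove $\omega''_M(Y)^G\cong\omega''_M(Y/G)$ by induction on $\dim Y$ without any normality hypothesis on $Y$. This is exactly what the paper does ("we will no longer assume that $Y$ is normal''). Since $\omega''_M$ is already defined on all of $\mathcal{P}_k$ and is a $pcdh$-sheaf there, the induction hypothesis now applies directly to $Z$, $f^{-1}(Z)$, and the exceptional divisors, all of which are lower-dimensional reduced projective $G$-schemes. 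With this single adjustment, the rest of your argument — the comparison of the two cartesian $pcdh$-squares and the identification of the three corner maps — goes through as written, and no universal-homeomorphism bookkeeping or passage through $\mathcal{N}_k$ is needed.
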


\begin{proof}
Let $X=Y/G$ with $Y \in \mathcal{N}_k$ and $G$ a finite group. 
We claim that $\omega_M''(Y)^G \cong \omega''_M(X)$. 
When $Y$ is smooth, this is true by \ref{cdh} and \ref{extension}.
In general, we will prove this by induction on the dimension of $Y$
and we will no longer assume that $Y$ is normal. (However, 
it is convenient to assume that $Y$ is reduced.) 
If $Y$ has dimension zero then $Y$ is smooth and the result is known.
Assume that $dim(Y)=d>0$. By $G$-equivariant resolution of singularities there is a blow-up square
$$\xymatrix{
E \ar[r]^g \ar[d]^h & Y' \ar[d]^f \\
Z \ar[r]^i  & Y}$$
such that $Y'$ is smooth, 
$Z\subset Y$ is a nowhere dense closed subscheme
which is invariant under the action of $G$ 
and such that 
$Y'-E\simeq Y-Z$. 
Taking quotients by $G$ gives the following blow-up square
$$\xymatrix{
E/G \ar[r] \ar[d] & Y'/G \ar[d] \\
Z/G \ar[r]  & Y/G.}$$
Using induction on dimension and the fact that 
$\omega''_M$ is a $pcdh$-sheaf, we are left to show that 
$\omega''_M(Y')^G\simeq \omega''_M(Y'/G)$. 
This follows from \ref{cdh} and
\ref{extension}.
\end{proof}

\begin{proof}[Proof of Theorem \ref{fully faithful}]
It remains to show that the functor is full. Let 
$M,\, N \in  \mathcal{M}_k^{\eff}$. 
Since every effective motive is a direct summand of the motive of 
a smooth and projective variety, 
we may assume that $M=M(X)$ and $N=M(Y)$ for $X, \, 
Y \in \mathcal{V}_k$.
Let $f : \omega_{M(X)} \to \omega_{M(Y)}$ be a morphism of presheaves. 
As in the proof of \ref{lem:omega-faithful},
there is an associated morphism of Chow motives 
$f_X(id_{M(X)}):M(X) \to M(Y)$. For 
$Z\in \mathcal{V}_k$ and 
$c\in \omega_{M(X)}(Z)$, 
we need to show that 
$f_X(id_{M(X)}) \circ c = f_Z(c) \in \omega_{M(Y)}(Z)$.

By \ref{extension} the morphism $f$ can be uniquely extended to a morphism $f'':\omega''_{M(X)} \to \omega''_{M(Y)}$ of $pcdh$-sheaves on $\mathcal{P}_k$. Moreover, by 
\ref{fh}, the restriction of $f''$ to 
$\mathcal{N}_k$ is a morphism of 
$fh$-sheaves.
By \cite[Prop 2.2.6]{a1} (see also \cite{Sing}), any $fh$-sheaf has canonical transfers 
and $f''|_{\mathcal{N}_k}$ commutes with them.  
Now $c \in  \omega_{M(X)}(Z) = CH^*(Z \times X)$ 
is the class of a finite correspondence
$\gamma\in Cor(Z,X)$ and 
$c=\omega'_{M(X)}(\gamma)(id_{M(X)})$.
(This follows from 
\cite[Corollary 19.2]{mvw} and the property that 
$C_*\mathbb{Q}_{tr}(X)$ is 
fibrant with respect to the projective motivic model structure;
this property holds because $X$ is proper, see 
\cite[Cor.~1.1.8]{a2}.)
Thus, we have:
$$f_Z(c)=f'_Z(\omega'_{M(X)}(\gamma)(id_{M(X)}))
=\omega'_{M(Y)}(\gamma)(f'_X(id_{M(X)}))=f_X(id_{M(X)}) \circ c.$$
This completes the proof.
\end{proof}

\section{}

\label{appendix-B}

Let $C'$ be a category and $\tau'$ a Grothendieck topology on $C'$. Given a functor $u : C \to C'$ there is an induced topology $\tau$ 
on $C$. (For the definition of the induced topology, we 
refer the reader to \cite[III 3.1]{sga4}.)

\begin{proposition} 
\label{induced topology}
Assume that $u:C \hookrightarrow C'$ is fully faithful and that  
every object of $C'$ can be covered, with respect to the topology $\tau'$, by objects in $u(C)$. 
Let $X\in C$ and $R\subset X$ be a sub-presheaf of $X$. Then, 
the following conditions are equivalent:
\begin{enumerate}
 
\item $R \subset X$ is a covering sieve for $\tau$.
   
\item 
There exists a family $(X_i \to X)_{i \in I}$ such that 
\begin{enumerate}

\item $R \supset Image(\coprod_i X_i \to X)$; 

\item $(u(X_i) \to u(X))_{i \in I}$  is a covering family for $\tau'$.   

\end{enumerate}
\end{enumerate}
Moreover $u_*:Shv(C')\to Shv(C)$ is an equivalence of categories.

\end{proposition}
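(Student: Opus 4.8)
The plan is to prove the equivalence $(1)\Leftrightarrow(2)$ directly from the definition of the induced topology and then bootstrap it to the comparison of sheaf categories via the machinery of \cite[III 3.1--3.3]{sga4}. First I would recall that a sieve $R\subset X$ on $X\in C$ is a covering sieve for the induced topology $\tau$ precisely when the sieve $R'\subset u(X)$ it generates (i.e. $R'(T') = \bigcup_{(\phi:u(Y)\to T')}\{\phi\circ u(\psi) : \psi\in R(Y)\}$, sheafified) is a covering sieve for $\tau'$. Granting this, direction $(2)\Rightarrow(1)$ is nearly immediate: if $(u(X_i)\to u(X))_{i}$ is a $\tau'$-cover and $R\supset \mathrm{Image}(\coprod_i X_i\to X)$, then the generated sieve $R'$ on $u(X)$ contains the sieve generated by the $u(X_i)\to u(X)$, hence is $\tau'$-covering, so $R$ is $\tau$-covering. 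For $(1)\Rightarrow(2)$, suppose $R$ is $\tau$-covering, so $R'$ is $\tau'$-covering; by hypothesis every object admits a $\tau'$-cover by objects of $u(C)$, so refining a generating family of $R'$ we obtain a $\tau'$-covering family $(u(X_i)\to u(X))_i$ with each $u(X_i)\to u(X)$ factoring (using full faithfulness of $u$, which lets us descend the morphisms $u(X_i)\to u(X)$ to morphisms $X_i\to X$ in $C$) through $R$. Since full faithfulness identifies $\mathrm{Hom}_C(X_i,X)$ with $\mathrm{Hom}_{C'}(u(X_i),u(X))$, the factorization through $R'$ descends to a factorization through $R$, giving (2)(a), while (2)(b) holds by construction.

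For the last assertion, that $u_*:Shv(C')\to Shv(C)$ is an equivalence, I would invoke the comparison lemma for Grothendieck topologies (\cite[III 4.1]{sga4}): if $u:C\hookrightarrow C'$ is fully faithful, $C'$ carries the topology $\tau'$, $C$ carries the induced topology $\tau$, and every object of $C'$ admits a $\tau'$-cover by objects of $u(C)$, then restriction along $u$ is an equivalence of the associated sheaf categories. Concretely, one checks that $u$ is continuous (this is automatic for the induced topology) and cocontinuous, so that $u_*$ (restriction) admits both adjoints; the essential surjectivity and full faithfulness of $u_*$ then follow from the density hypothesis by a standard argument: a sheaf $G$ on $C$ is reconstructed from $u_*G$ by right Kan extension along $u$ followed by sheafification, and the unit and counit are isomorphisms because every object of $C'$ is covered by $u(C)$ and $G$ satisfies descent.

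The main obstacle I anticipate is the careful bookkeeping in $(1)\Rightarrow(2)$: one must be precise about the relationship between a sieve on $X$ in the presheaf category $PSh(C)$ and the sieve it generates on $u(X)$ in $PSh(C')$, and in particular that covering for the induced topology is defined exactly so that this passage is compatible with covering sieves. The full faithfulness of $u$ is what makes the descent of morphisms $u(X_i)\to u(X)$ back to $C$ legitimate, so it is essential that this hypothesis is used at precisely that point. Once the sieve-theoretic equivalence is in place, the equivalence of sheaf categories is a black-box application of the comparison lemma, and I would present that step by citing \cite[III 4.1]{sga4} rather than reproving it.
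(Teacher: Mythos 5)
Your proposal is correct in outline and broadly parallels the paper's argument, but it replaces the paper's key technical step with an appeal to an uncited characterization. The paper works directly from the SGA4 criterion that $R$ is $\tau$-covering iff the left Kan extension $u^*(R)\to u(X)$ is a bicovering morphism of presheaves on $C'$; the core of the paper's proof of $(1)\Rightarrow(2)$ is then the explicit computation, via the colimit formula for $u^*$ and full faithfulness, that $R\simeq u_*u^*(R)$, from which the factorization of $\coprod_i u(X_i)\to u^*(R)$ through $R$ (and hence condition (2)(a)) drops out. You instead ``recall'' that $R$ is $\tau$-covering iff the sieve $R'\subset u(X)$ generated by $\{u(\psi):\psi\in R\}$ is $\tau'$-covering, and argue from there. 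That characterization is indeed valid under the hypotheses of the proposition, but it is not the SGA4 definition of the induced topology, and it is close enough to the statement being proved that it needs a precise citation or its own proof --- which is, in effect, what the paper's $R\simeq u_*u^*(R)$ computation supplies. Two smaller points: your formula for $R'$ has the arrow $\phi$ going the wrong way (it should be $\phi:T'\to u(Y)$, composed to $u(\psi)\circ\phi:T'\to u(X)$), and the parenthetical ``sheafified'' is spurious, since the generated sieve is already a subpresheaf of $u(X)$ with no sheafification involved. The handling of the final assertion via the comparison lemma [SGA4, III~4.1] agrees with the paper.
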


\begin{proof}
The last assertion is just \cite[Th\'eor\`eme III.4.1]{sga4}.

$(1) \Rightarrow (2)$:
Suppose $R \subset X$ is a covering sieve for $\tau$. Then $u^*(R) \to u(X)$ is a bicovering morphism for $\tau'$, i.e., induces an isomorphism on the associated sheaves (see \cite[D\'efinition I.5.1, D\'efinition II.5.2 and Proposition III. 1.2]{sga4} where $u^*$ was denoted by $u_!$ which is not so standard nowadays). Since $C$ contains a generating set of objects for $\tau'$, there is a covering family of the form $(u(X_i) \to u(X))_{i \in I}$ for the topology 
$\tau'$ and a dotted arrow as below
$$\xymatrix{& u^*(R) \ar[d]\\
\coprod_i u(X_i) \ar[r]\ar@{.>}[ur] & u(X)}$$  
making the triangle commutative.

Now, recall that for $U'\in C'$, one has
$$u^*(R)(U')=\underset{(V,\,U'\to u(V))\, \in  \, U'\backslash C}{colim} \; R(V)$$
where $U'\backslash C$ is the comma category. 
Using the fact that $u:C\hookrightarrow C'$ is fully faithful, we see that for $U'=u(U)$ the category $u(U)\backslash C$ has an initial 
object given by $(U,\,id:u(U)=u(U))$. It follows that 
$R(U)\simeq u^*(R)(u(U))$ which can be also written as
$R\simeq u_*u^*(R)$.
In particular, the maps of presheaves $u(X_i)\to u^*(R)$ are uniquely induced 
by maps of presheaves $X_i \to R$. This shows that 
$R$ contains the image of the morphism of presheaves 
$\coprod_iX_i \to X$.

$(2) \Rightarrow (1)$:
Now suppose that condition (2) is satisfied. 
We must show that $u^*(R) \to u(X)$ is a bicovering morphism of 
presheaves for $\tau'$, i.e., that $a_{\tau'}(u^*(R)) \to a_{\tau'}(u(X))$ is an isomorphism where $a_{\tau'}$ is the ``associated $\tau'$-sheaf'' functor.

Since the surjective morphism of sheaves 
$a_{\tau'}(\coprod_i u(X_i)) \to a_{\tau'}(u(X))$ factors through 
$a_{\tau'}(u^*(R))$, the surjectivity of $a_{\tau'}(u^*(R)) \to a_{\tau'}(u(X))$ is clear. 
Since every object of $C'$ can be covered by objects in
$u(C)$, to prove injectivity it suffices to show that 
$u^*(R)(u(U))\to u(X)(u(U))$ is injective for all
$u\in C$.  
From the proof of the implication
$(1)\Rightarrow (2)$, we know that this map is nothing but the inclusion $R(U)\hookrightarrow X(U)$. This finishes the proof.
\end{proof}

The $pcdh$ topology on 
$\mathcal{P}_k$ induces topologies on 
$\mathcal{V}_k$ and $\mathcal{V}'_k$ which we also 
call $pcdh$. The next corollary gives a description 
of these topologies.

\begin{corollary} 
\label{subspace topology}
The categories of $pcdh$-sheaves on $\mathcal{V}_k$
and $\mathcal{V}_k'$ are equivalent to the category of 
$pcdh$-sheaves on $\mathcal{P}_k$. Moreover, 
$pcdh$-sheaves on $\mathcal{V}_k$ and $\mathcal{V}_k'$ 
can be characterized as follows.

\begin{enumerate}
\item A presheaf $F$ on $\mathcal{V}_k$ such that 
$F(\emptyset)=0$ is a $pcdh$-sheaf if 
and only if for every smooth and projective variety $X$, and 
every closed and smooth subscheme $Z\subset X$, one has
$$F(X)\simeq ker\{F(\tilde{X})\oplus F(Z) \to F(E)\}$$
where $\tilde{X}$ is the blow-up of $X$ in $Z$ and 
$E\subset \tilde{X}$ is the exceptional divisor. 

\item 
A presheaf $F$ on $\mathcal{V}'_k$ such that 
$F(\emptyset)=0$ is a $pcdh$-sheaf if 
and only if for every smooth and projective variety $X$ together 
with an action of a finite group $G$, 
and every closed and smooth subscheme $Z\subset X$ globally invariant under the action of $G$, one has
$$F(X/G)\simeq ker\{F(\tilde{X}/G)\oplus F(Z/G) \to F(E/G)\}$$
where $\tilde{X}$ is the blow-up of $X$ in $Z$ and
$E\subset \tilde{X}$ is the exceptional divisor. 

\end{enumerate}
\end{corollary}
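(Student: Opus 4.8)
The plan is to deduce the whole statement from Proposition~\ref{induced topology}, applied to the fully faithful inclusions $\mathcal{V}_k\hookrightarrow\mathcal{P}_k$ and $\mathcal{V}'_k\hookrightarrow\mathcal{P}_k$, together with resolution of singularities over $k$ (in its plain and its $G$-equivariant forms). For the first assertion I would verify the hypothesis of Proposition~\ref{induced topology}: every $X\in\mathcal{P}_k$ admits a $pcdh$-cover by an object of $\mathcal{V}_k$, hence also by an object of $\mathcal{V}'_k$ since $\mathcal{V}_k\subset\mathcal{V}'_k$. Indeed, a resolution of singularities $\widetilde{X}\to X$ (see \cite[Theorem~3.15]{kol}) is an isomorphism over the smooth locus, so $(\widetilde{X}\to X,\ X_{\mathrm{sing}}\hookrightarrow X)$ is a $pcdh$-covering family with $\widetilde{X}$ smooth projective and $\dim X_{\mathrm{sing}}<\dim X$; inducting on dimension refines this to a $pcdh$-cover of $X$ by finitely many smooth projective varieties. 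The final clause of Proposition~\ref{induced topology} then yields both equivalences of sheaf categories.

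For the characterization of the induced topologies I would use the equivalence $(1)\Leftrightarrow(2)$ of Proposition~\ref{induced topology}. The key claim is that the induced $pcdh$ topology on $\mathcal{V}_k$ is generated by the smooth blow-up squares $(Bl_Z X\to X,\ Z\hookrightarrow X)$ with $X$ smooth projective and $Z\subset X$ smooth closed. Each such family is a $pcdh$-cover, since $Bl_ZX\to X$ is projective, $Z\hookrightarrow X$ is a closed embedding, and both are isomorphisms over $X\setminus Z$; and for a presheaf $F$ with $F(\emptyset)=0$ the sheaf condition for this two-element cover unwinds routinely to the stated equality $F(X)=\ker\{F(Bl_ZX)\oplus F(Z)\to F(E)\}$, the higher terms of the descent datum imposing no further condition because the restriction of a section over $Bl_ZX$ to the exceptional divisor is forced to be pulled back from $Z$. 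This already shows that every $pcdh$-sheaf on $\mathcal{V}_k$ satisfies the kernel formula of part~(1).

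The converse --- that a presheaf $F$ on $\mathcal{V}_k$ with $F(\emptyset)=0$ satisfying the kernel formula for all smooth blow-up squares is actually a $pcdh$-sheaf --- amounts to the assertion that these squares generate the whole induced topology, and this is the step I expect to be the main obstacle. An arbitrary family $(X_i\to X)_{i\in I}$ in $\mathcal{V}_k$ whose image in $\mathcal{P}_k$ is a $pcdh$-cover is a priori only refined by a tower of abstract blow-up squares whose intermediate vertices may be singular; converting such a tower into one built from \emph{smooth} blow-up squares over \emph{smooth} varieties is exactly what resolution of singularities provides. One clean way to organise this is to construct, for each object of $\mathcal{P}_k$, a smooth projective hyperresolution along which $F$ descends (following Guill\'en--Navarro Aznar), and to check that the resulting extension is a $pcdh$-sheaf restricting to $F$ on $\mathcal{V}_k$; alternatively, one verifies that the smooth blow-up squares over smooth projective varieties form a bounded, complete, regular cd-structure, so that the standard descent criterion applies verbatim. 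In either approach the whole weight of the argument rests on resolution of singularities; once it is in place, the remainder is formal manipulation of Proposition~\ref{induced topology}.

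For part~(2) the same argument goes through over $\mathcal{V}'_k$ using $G$-equivariant resolution of singularities. For $X$ smooth projective with an action of a finite group $G$ and $Z\subset X$ smooth closed and $G$-invariant, the family $(Bl_ZX/G\to X/G,\ Z/G\hookrightarrow X/G)$ is a $pcdh$-cover of $X/G\in\mathcal{V}'_k$, since passing to the $G$-quotient preserves properness, closed embeddings, and the isomorphism away from the centre; these squares generate the induced topology by the same resolution argument; and the descent condition along such a square becomes the displayed kernel formula once one uses that forming the $G$-quotient commutes with blow-up along a $G$-invariant centre and that $(-)^G$ is exact on $\mathbb{Q}[G]$-modules.
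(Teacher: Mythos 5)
Your high-level strategy coincides with the paper's: invoke Proposition~\ref{induced topology} to compare sheaves on $\mathcal{V}_k$, $\mathcal{V}'_k$ and $\mathcal{P}_k$, use (equivariant) resolution of singularities to verify its hypothesis, and observe that the (equivariant) smooth blow-up squares are $pcdh$-covers, so the kernel condition is necessary. The necessity direction and the equivalence-of-categories clause you treat exactly as the paper does. But you stop at precisely the point you yourself flag as ``the main obstacle'': showing that a presheaf on $\mathcal{V}_k$ (resp.\ $\mathcal{V}'_k$) with $F(\emptyset)=0$ satisfying the kernel formula for smooth blow-up squares is actually a $pcdh$-sheaf for the induced topology. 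The two alternatives you gesture at do not close this gap off the shelf. Voevodsky's cd-structure criterion presupposes pullbacks of distinguished squares, and $\mathcal{V}_k$, $\mathcal{V}'_k$ are not stable under fibre products, so the ``complete, bounded, regular cd-structure'' route cannot be applied verbatim on these subcategories; and the Guill\'en--Navarro Aznar descent formalism would itself need a nontrivial comparison with the induced Grothendieck topology in the sense of Proposition~\ref{induced topology}.

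The paper closes the gap with a direct refinement argument that you come close to but do not carry out. Given $X/G\in\mathcal{V}'_k$ and an arbitrary covering sieve $R\subset (X/G)$ for the induced topology, Proposition~\ref{induced topology}(2) produces a $pcdh$-covering family $(Y_i\to X/G)$ with $Y_i\in\mathcal{V}'_k$ whose image sieve lies in $R$. Equivariant resolution of singularities then refines this further to the sieve $R'$ generated by a \emph{sequence} of equivariant blow-ups in smooth centres $X_n\to\cdots\to X_0=X$ together with the quotients of the successive centres $Z_i$ and exceptional divisors $E_i$. Induction on $n$, using the kernel hypothesis at each stage, yields the explicit formula \eqref{eqn:induc-pcdh-sq} and hence $F(X/G)\simeq F(R')$; since $R'$ refines the arbitrary sieve $R$, this gives the sheaf condition. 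That inductive step is the content your proposal is missing. Finally, your closing remark that exactness of $(-)^G$ on $\mathbb{Q}[G]$-modules is needed to obtain the displayed kernel formula in part~(2) is off target: that formula is stated intrinsically on $\mathcal{V}'_k$ and follows from the descent condition once one knows that $G$-quotients commute with blow-ups along $G$-invariant centres; the $(-)^G$-exactness enters later, in Lemma~\ref{extension}, when comparing $\omega''_M$ with $\omega'_M$.
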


\begin{proof}
By Hironaka's resolution of singularities,
every projective variety can be covered (with respect to the $pcdh$ topology) by smooth and projective varieties, i.e., by objects in 
the subcategory 
$\mathcal{V}_k$ (and hence $\mathcal{V}'_k$).
Thus, the first claim follows from
\cite[Th\'eor\`eme III.4.1]{sga4}.

Next, we only treat $(2)$ 
as the verification of $(1)$ is similar and in fact easier.
The condition in (2) is necessary for $F$ to be a 
$pcdh$-sheaf as 
$(\tilde{X}/G \to X/G, \, Z/G \to X/G)$ 
is a $pcdh$-cover.
Hence we only need to show that the condition is sufficient.

Let $X/G\in \mathcal{V}'_k$ where 
$X$ is a smooth and projective variety and
$G$ is a finite group acting on $X$.
It suffices to show that 
$F(X/G)\simeq Colim_{R\subset (X/G)}\;F(R)$
where $R\subset (X/G)$ varies among covering sieves for the 
$pcdh$-topology on $\mathcal{V}'_k$. 
We will prove a more precise statement namely: any
covering sieve $R\subset (X/G)$ can be refined into a covering sieve
$R'\subset (X/G)$ such that $F(X/G)\simeq F(R')$.

By \ref{induced topology},
there exists a $pcdh$-cover $(Y_i\to (X/G))_i$ 
with $Y_i\in \mathcal{V}'_k$ and such that 
$R\supset Image(\coprod_i Y_i \to (X/G))$.
Using equivariant resolution of singularities, we may find a 
sequence of equivariant blow-ups in smooth centers 
$Z_i\subset X_i$:
$$X_n\to \cdots \to X_1\to X_0=X$$
such that the covering family 
\begin{equation}
\label{eq:cover-fam-pcdh}
(X_n/G\to X/G,\,Z_{n-1}/G\to X/G,\, \cdots,\, Z_0/G\to X/G)
\end{equation}
is a refinement of the sieve $R$. 
Using induction and the property satisfied by $F$ from $(2)$, we see
that 
\begin{equation}
\label{eqn:induc-pcdh-sq}
F(X/G)\simeq ker\{F(X_n/G)\oplus F(Z_{n-1}/G)\oplus \cdots \oplus 
F(Z_0/G)\hspace{1cm}
\end{equation}
$$\hspace{6cm}\longrightarrow F(E_n/G)\oplus \cdots 
\oplus F(E_1/G)\}$$
where $E_i\subset X_i$ is the exceptional divisor of the blow-up
with center $Z_{i-1}$.
It is easy 
to deduce from \eqref{eqn:induc-pcdh-sq}
that $F(X/G)\simeq F(R')$
when $R'\subset (X/G)$ is the image of the
covering family
\eqref{eq:cover-fam-pcdh}.
\end{proof}

\end{document}